\renewcommand{\S}{\mathcal{S}}
\renewcommand{\L}{\mathcal{L}}
\newcommand{\Z}{{\mathbb{Z}}}
\renewcommand{\i}{\infty}
\newcommand{\beqs}{\begin{equation*}}
\newcommand{\eeqs}{\end{equation*}}
\numberwithin{equation}{section}
\newtheorem{theorem}{Theorem}[section]
\newtheorem{corollary}[theorem]{Corollary}
\begin{document}

\makeatletter
\def\imod#1{\allowbreak\mkern10mu({\operator@font mod}\,\,#1)}
\makeatother

\author{Alexander Berkovich}
   	\address{Department of Mathematics, University of Florida, 358 Little Hall, Gainesville FL 32611, USA}
   	\email{alexb@ufl.edu}

\author{Ali Kemal Uncu}
   \address{Research Institute for Symbolic Computation, Johannes Kepler University, Linz. Altenbergerstrasse 69
A-4040 Linz, Austria}
   \email{akuncu@risc.jku.at}

\thanks{Research of the first author is partly supported by the Simons Foundation, Award ID: 308929. Research of the second author is supported by the Austrian Science Fund FWF, SFB50-07 and SFB50-09 Projects.}

\title[\scalebox{.9}{Polynomial Identities Implying Capparelli's Partition Theorems}]{Polynomial Identities Implying Capparelli's Partition Theorems}
     
\begin{abstract} 
We propose and recursively prove polynomial identities which imply Capparelli's partition theorems.  We also find perfect companions to the results of Andrews, and Alladi, Andrews and Gordon involving $q$-trinomial coefficients. We follow Kur\c{s}ung\"oz's ideas to provide direct combinatorial interpretations of some of our expressions. We make use of the trinomial analogue of Bailey's lemma to derive new identities. These identities relate certain triple sums and products. A couple of new Slater type identities involving bases $q^2$, $q^3$, $q^6$, and $q^{12}$ are also proven. We also discuss a new infinite hierarchy containing these Slater type identities. 
\end{abstract}

\keywords{Happy Birthday; Capparelli's identities; Integer partitions; $q$-Trinomial coefficients; $q$-Series}
  
\subjclass[2010]{05A15, 05A17, 05A19, 11B37, 11P83}

\date{\today}
   
\dedicatory{To our kind mentor and great inspiration George E. Andrews}
   
\maketitle

\section{Introduction and background}\label{Sec_intro}

A partition $\pi$ is a finite, non-increasing sequence of positive integers $(\pi_1,\pi_2,\dots,\pi_k)$. The $\pi_i$ are called parts of the partition $\pi$, and $\pi_1+\pi_2+\dots+\pi_k$ is called the size of $\pi$. We call $\pi$ \textit{a partition of $n$} if the size of $\pi$ is $n$. Conventionally, we define the empty sequence as the only partition of 0. 

We use the standard notations as in \cite{Theory_of_Partitions} and \cite{Gasper_Rahman}. For formal variables $a_i$ and $q$, and a non-negative integer $N$ 
	\begin{align}
	\nonumber
&(a)_N:=(a;q)_N = \prod_{n=0}^{N-1}(1-aq^n),\text{   and   }  (a;q)_\i:= \lim_{N\rightarrow\infty}(a;q)_N,\\ 
	\nonumber (a_1,&a_2,\dots,a_k;q)_N := (a_1;q)_N(a_2;q)_N\dots (a_k;q)_n\text{ for any}N\in\Z_{\geq0}\cup\{\infty\},
   	\end{align}
	and we define the $q$-binomial coefficients in the classical manner as
%	coefficients, respectively, as
	\begin{align}
 \label{Binom_def}
  \displaystyle {m+n \brack m}_q &:= \left\lbrace \begin{array}{ll}\frac{(q)_{m+n}}{(q)_m(q)_{n}},&\text{for }m, n \geq 0,\\
   0,&\text{otherwise.}\end{array}\right.%\\
\intertext{It is well known that for $m\in\mathbb{Z}_{\geq0}$}
 \label{Binom_limit}
 \displaystyle \lim_{N\rightarrow\infty}{N\brack m}_q &= \frac{1}{(q;q)_m},
 \intertext{for any $j\in \mathbb{Z}_{\geq0}$}
  \label{Binom_limit2} \displaystyle \lim_{M\rightarrow\infty}{2M\brack M+j}_q &= \frac{1}{(q;q)_\infty},
 \intertext{and for $n,\ m\in\mathbb{Z}_{\geq0}$}
\label{Binom_1_over_q}{n+m\brack m}_{q^{-1}} &= q^{-mn} {n+m\brack m}_{q}.
   \end{align}

Let $C_m(n)$ be the number of partitions of $n$ into distinct parts where no part is congruent to $\pm m$ modulo $6$. Define $D_m(n)$ to be the number of partitions of $n$ into parts, not equal to $m$, where the minimal difference between consecutive parts is 2. In fact, the difference between consecutive parts is greater than or equal to $4$ unless consecutive parts are $3k$ and $3k+3$ (yielding a difference of 3), or $3k-1$ and $3k+1$ (yielding a difference of 2) for some $k\in\Z_{>0}$. 
   
In 1988, S.~Capparelli stated two conjectures for $C_m$ and $D_m$ in his thesis \cite{Capparelli_thesis}. The first one was later proven by G.~E.~Andrews \cite{Andrews_Capparelli} in 1992 during the Centenary Conference in Honor of Hans Rademacher. Two years later Lie theoretic proofs were supplied by Tamba and Xie \cite{Tamba} and by Capparelli \cite{Capparelli_proof}. The first of Capparelli's conjectures was stated and proven in the form of Theorem~\ref{capparelliconj}.
\begin{theorem}[Andrews 1992]\label{capparelliconj} For any non-negative integer $n$, \begin{equation}\nonumber
C_1(n) = D_1(n).\end{equation}
\end{theorem}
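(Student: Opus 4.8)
The plan is to prove the equivalent generating function identity and then compare coefficients of $q^n$. The $C_1$ side is immediate from the definition: since $C_1(n)$ counts partitions into distinct parts avoiding the residues $\pm 1 \pmod 6$, i.e.\ using only parts congruent to $0,2,3,4 \pmod 6$, we have
\[
\sum_{n\ge 0} C_1(n)\, q^n = (-q^2,-q^3,-q^4,-q^6;q^6)_\i .
\]
Thus the entire content of the theorem is to show that the generating function for $D_1(n)$ equals this product.

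The first substantive step is to convert the gap conditions defining $D_1$ into a $q$-difference equation. I would organize the partitions counted by $D_1$ according to the residue class modulo $3$ of the largest part, and track, through auxiliary variables, the largest part together with the number of parts; prepending a new largest part is then governed by the allowed differences (generically $\ge 4$, but $3$ between consecutive multiples of $3$, and $2$ across a multiple of $3$). This produces a small linear system, or equivalently a transfer structure, whose solution expresses $\sum_n D_1(n)q^n$ as an explicit Rogers--Ramanujan--Slater-type multiple series in the base $q$.

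The second step is to identify that series with the product. In keeping with the finitization philosophy, I would introduce polynomial truncations $A_N(q)$ and $B_N(q)$, counting the respective partitions with parts bounded in terms of $N$ and written with $q$-binomial coefficients, prove the single polynomial identity $A_N(q)=B_N(q)$ by induction on $N$ (matching a common recurrence together with the base cases), and then let $N\to\infty$, using \eqref{Binom_limit} and \eqref{Binom_limit2} to recover simultaneously the closed-form sum on the $D_1$ side and the product on the $C_1$ side.

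The main obstacle is the first step. The gap conditions for $D_1$ are genuinely irregular, mixing differences $2$, $3$, and $4$ depending on residues modulo $3$, so the bookkeeping needed to produce a clean $q$-difference equation, and hence a tractable closed form amenable to finitization, is delicate; the exceptional pairs $(3k,3k+3)$ and $(3k-1,3k+1)$ must be incorporated without double counting. Once a manageable polynomial identity is isolated, the induction and the limiting argument are routine, and the only remaining care is checking that the recurrence carries the correct initial conditions and that no boundary contributions are lost in the limit.
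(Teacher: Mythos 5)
Your overall architecture---turn the gap conditions into a transfer-type recurrence, finitize both sides into polynomials, prove the polynomial identity by matching a common recurrence and initial values, then let $N\to\infty$---is exactly the architecture of the paper's route to Theorem~\ref{capparelliconj} (via Theorem~\ref{Intro_Main_THM}), and of Andrews' original proof as well. However, one concrete step of your plan fails as stated: you define $B_N(q)$ as the polynomial ``counting the respective partitions with parts bounded in terms of $N$'' on the $C_1$ side. No such definition can work, because bounded $D_1$-partitions and bounded $C_1$-partitions are \emph{not} equinumerous for any finite bounds: with all parts $\leq 4$ the two generating functions are $1+q^2+q^3+q^4+q^6$ (this is $G_{1,4}(1,1,q)$, an initial value used in the paper) versus $1+q^2+q^3+q^4+q^5+q^6+q^7+q^9$, and no rescaling of bounds helps, since the maximal sizes grow like $M^2/6$ on the $D_1$ side but $M^2/3$ on the $C_1$ side. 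The identity $C_1(n)=D_1(n)$ simply does not finitize combinatorially on the product side. The key non-routine idea---which your plan files under routine bookkeeping---is the discovery of a polynomial that satisfies the same recurrence as the $D_1$-side finitization and still converges to the product: this is the Alladi--Andrews--Gordon polynomial \eqref{polyAAG}, whose $a=b=1$ specialization $\sum_{l} q^{3{N-2l+1 \choose 2}}{N+1\brack 2l}_{q^3}(-q^2,-q^4;q^6)_l$ is not the generating function of any bounded family of $C_1$-partitions and is certified only through the recurrence \eqref{Andrews_recurrence} and initial conditions (Andrews' original proof uses a $q$-trinomial sum \eqref{G1_tri} in the same non-combinatorial role).

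For comparison, the paper's realization of your plan is: (i) the recurrences \eqref{AAGrec3}--\eqref{AAGrec1} for the bounded generating functions $G_{1,N}$, which is your ``transfer structure''; (ii) the double-sum finitization on the left of \eqref{Fin_Cap_1_1}, i.e.\ the Kanade--Russell/Kur\c{s}ung\"oz series \eqref{KR1_GF} made polynomial with $q$-binomial coefficients and justified combinatorially by the pair/singleton motions of Section~\ref{Sec_Motions}; (iii) a computer-assisted (qMultiSum) proof that this double sum satisfies Andrews' iterated recurrence \eqref{Andrews_iterated_recurrence}, plus verification of four initial values, identifying both sides of \eqref{Fin_Cap_1_1} with $G_{1,3N-2}(1,1,q)$; and (iv) the limit $N\to\infty$, where the product side requires not \eqref{Binom_limit2} but the parity-restricted Euler identity \eqref{odd_or_even_identity_which_doesnt_matter}, because the summation index $l$ concentrates near $N/2$ and one must evaluate $\sum_{t\equiv N \pmod{2}} q^{3{t\choose 2}}/(q^3;q^3)_t=(-q^3;q^3)_\infty$. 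So your step~2 is indeed hard, as you say, but it is not the only hard step: until both finitizations are exhibited---and in particular a correct, necessarily non-combinatorial, $B_N$---the induction you propose cannot even be set up.
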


A year after Capparelli's proof, Alladi, Andrews, and Gordon improved on Theorem~\ref{capparelliconj} in \cite{Refinement}. They gave a refinement of these identities by introducing restrictions on the number of occurrences of parts belonging to certain congruence classes. In particular, they stated and proved the following extension of Theorem~\ref{capparelliconj}.

\begin{theorem}[Capparelli 1994; Alladi, Andrews, Gordon 1995]\label{fulcapparelli} For any non-negative integer $n$ and $m\in\{1,2\}$,\begin{equation}\nonumber
C_m(n) = D_m(n).
\end{equation}
\end{theorem}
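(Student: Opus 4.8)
The plan is to establish the identity $C_m(n) = D_m(n)$ for both $m \in \{1,2\}$ through generating functions, and in keeping with the title of the paper, via polynomial (finite) identities that stabilize to the infinite-product / infinite-series generating functions in the limit. First I would write down the two-variable generating functions. On the $C_m$ side, since these count partitions into distinct parts avoiding residues $\pm m \pmod 6$, the generating function factors as a manifestly product-form expression; tracking an auxiliary parameter for the congruence-class refinement of Alladi--Andrews--Gordon, one obtains a weighted product over the allowed residue classes modulo $6$. On the $D_m$ side, the difference conditions (gap $\geq 2$ in general, with the listed exceptional gaps of $2$ and $3$ at the arithmetic-progression boundaries) translate into a sum-side generating function; the cleanest route is to encode the admissible sequences by a system whose solution is a multiple $q$-series built from $q$-binomial or $q$-trinomial coefficients.

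The core of the argument is to prove a \emph{polynomial} identity whose two sides are the truncations of these $C_m$ and $D_m$ generating functions, and then let the truncation parameter $N\to\infty$. I would set up a finite sum $S_N(q)$ on the $D_m$ side—indexed by the number of parts in each exceptional block—expressed through $q$-binomial coefficients, and a finite product or finite alternating sum $P_N(q)$ on the $C_m$ side. The identity $S_N(q) = P_N(q)$ is then proved \textbf{recursively}: I would exhibit a recurrence (a $q$-difference equation) in $N$ satisfied by both sides, check the finitely many initial cases by hand, and conclude equality for all $N$ by induction. The key stabilization inputs are the limit relations \eqref{Binom_limit}, \eqref{Binom_limit2}, together with the inversion \eqref{Binom_1_over_q}, which let the finite $q$-binomials converge to the infinite-product factors $1/(q;q)_\infty$ appearing in the limiting generating functions, so that $\lim_{N\to\infty} S_N(q)$ and $\lim_{N\to\infty} P_N(q)$ reproduce the analytic $D_m$ and $C_m$ generating functions respectively.

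Having matched generating functions, the theorem follows by comparing coefficients of $q^n$; the refinement for the two distinct values $m\in\{1,2\}$ is handled uniformly by carrying the refinement variable through the recurrence, with $m=1$ recovering Theorem~\ref{capparelliconj}. I expect the main obstacle to be two-fold. First, correctly guessing the polynomial identity itself: one must find the right finite analogues so that both sides satisfy a \emph{common} recurrence, and discovering that recurrence (rather than merely verifying it) is the delicate step, since the exceptional difference conditions create a coupling between adjacent blocks that is not a single clean telescoping. Second, verifying that the recurrence is satisfied by the product/trinomial side typically requires nontrivial $q$-series manipulations—contiguous relations among $q$-binomial (or $q$-trinomial) coefficients—and ensuring the boundary terms in these relations vanish or cancel is where sign and index bookkeeping can easily go wrong. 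Once the recurrence and base cases are secured, the induction and the passage to the limit are routine.
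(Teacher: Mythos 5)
Your proposal is correct in outline and follows essentially the same route as the paper: finite (polynomial) analogues of the two generating functions, proved equal by exhibiting a common recurrence in the truncation parameter plus finitely many initial conditions (the paper gets these from the Alladi--Andrews--Gordon recurrences for $G_{m,N}$ and from computer-assisted recurrences for the double sums), followed by the limit $N\to\infty$ via \eqref{Binom_limit} to recover \eqref{Cap1}--\eqref{Cap3}, whence $C_m(n)=D_m(n)$ by comparing coefficients. The one place your plan's specifics would need correcting is the $C_m$ side: it cannot be a truncated product (nor an alternating sum), but must be the Alladi--Andrews--Gordon-type polynomials as in \eqref{polyAAG} and \eqref{G2_U_poly} --- sums of partial products weighted by $q$-binomial coefficients --- whose passage to the infinite product also requires the Euler/$q$-Gauss identity \eqref{odd_or_even_identity_which_doesnt_matter}, not only the $q$-binomial limits you cite.
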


In recent paper \cite{Kanade_Russell} Kanade and Russell found the explicit generating functions for the partitions that satisfy the difference conditions of the Capparelli's partition theorem. Namely, for \begin{equation}\label{Qnm}Q(m,n):= 2m^2 + 6mn + 6n^2,\end{equation} we have \begin{align}
\label{KR1_GF}\sum_{\pi\in\mathcal{D}_1}q^{|\pi|}&=\sum_{m,n\geq 0} \frac{q^{Q(m,n)}}{(q;q)_m(q^3;q^3)_n}, \\
\label{KR2_GF}\sum_{\pi\in\mathcal{D}_2}q^{|\pi|}&=\sum_{m,n\geq 0} \frac{q^{Q(m,n)+m}}{(q;q)_m(q^3;q^3)_n} + \sum_{m,n\geq 0} \frac{q^{Q(m,n)+4m+6n+1}}{(q;q)_m(q^3;q^3)_n},  
\intertext{where $\mathcal{D}_m$ (for $m=1$ and 2) be the set of all partitions that satisfy the conditions of $D_m(n)$ for some $n\in\Z_{\geq 0}$. Later we will also be using the notation $\mathcal{D}_{m,N}$ for the partitions from $\mathcal{D}_m$ where all the parts of partitions are $\leq N$.}
\intertext{
Independently, Kur\c{s}ung\"oz \cite{Kagan, Kagan2} discovered the same generating functions' representations with some slight difference in the representation of \eqref{KR2_GF}:}
\label{Kagan_GF} \sum_{\pi\in\mathcal{D}_2}q^{|\pi|}&=\sum_{m,n\geq 0} \frac{q^{Q(m,n)+m+3n}}{(q;q)_m(q^3;q^3)_n} + \sum_{m,n\geq 0} \frac{q^{Q(m,n)+3m+6n+1}}{(q;q)_m(q^3;q^3)_n}.
\end{align} 
With elementary manipulations one can easily show that these two representations \eqref{KR2_GF}--\eqref{Kagan_GF} are equivalent.

Provided that the Capparelli Partition Theorem is valid, these imply \begin{align}
\label{Cap1}\sum_{m,n\geq 0} \frac{q^{Q(m,n)}}{(q;q)_m(q^3;q^3)_n} &= (-q^2,-q^4;q^6)_\infty (-q^3;q^3)_\infty,\\
\label{Cap2}\sum_{m,n\geq 0} \frac{q^{Q(m,n)+m+3n}}{(q;q)_m(q^3;q^3)_n} + \sum_{m,n\geq 0}& \frac{q^{Q(m,n)+3m+6n+1}}{(q;q)_m(q^3;q^3)_n} = (-q,-q^5;q^6)_\infty(-q^3;q^3)_\infty,\\
\label{Cap3}\sum_{m,n\geq 0} \frac{q^{Q(m,n)+m}}{(q;q)_m(q^3;q^3)_n} + \sum_{m,n\geq 0}& \frac{q^{Q(m,n)+4m+6n+1}}{(q;q)_m(q^3;q^3)_n} = (-q,-q^5;q^6)_\infty(-q^3;q^3)_\infty.
\end{align}
We would like to remark that Sills \cite{Sills} discovered different series representations of the product in \eqref{Cap1}.

The goal of this paper is to find polynomial extensions of three identities \eqref{Cap1}-\eqref{Cap3}, and prove these polynomial identities using recurrences. In particular, we prove the following theorem.

%%%%%
%%%RECALL THAT THIS IS G_1, 3N-2
%%%%
\begin{theorem}\label{Intro_Main_THM}For any $N\in\Z_{\geq 0}$, we have
\begin{align} 
\label{Fin_Cap_1_1}&\sum_{m,n\geq 0} q^{Q(m,n)}{3(N-2n-m)\brack m}_q {2(N-2n-m)+n\brack n}_{q^3}= \sum_{l=0}^{N} q^{3{N-2l \choose 2}} {N\brack 2l}_{q^3} (-q^2,-q^4;q^6)_l,\\\nonumber\\
\nonumber &\sum_{m,n\geq 0} q^{Q(m,n)+m+3n}{3(N-2n-m)+2\brack m}_q {2(N-2n-m)+n+1\brack n}_{q^3}\\ 
\label{Fin_Cap_2_1}&\hspace{1cm}+ \sum_{m,n\geq 0} q^{Q(m,n)+3m+6n+1}{3(N-2n-m)\brack m}_q {2(N-2n-m)+n\brack n}_{q^3} \\
\nonumber&\hspace{.1cm}= \sum_{l=0}^{N} q^{3{N-2l \choose 2}} {N+1\brack 2l+1}_{q^3} (-q;q^6)_{l+1}(-q^5;q^6)_l,\\
\nonumber\\
\nonumber &\sum_{m,n\geq 0} q^{Q(m,n)+m}{3(N-2n-m)+2\brack m}_q {2(N-2n-m)+n+2\brack n}_{q^3}\\ \label{Fin_Cap_3_1}&\hspace{1cm}+ \sum_{m,n\geq 0} q^{Q(m,n)+4m+6n+1}{3(N-2n-m)-1\brack m}_q {2(N-2n-m)+n\brack n}_{q^3} \\
\nonumber&\hspace{1cm}+ q^{Q(0,N/2) + 3N + 1}\chi(N)\\
\nonumber&\hspace{.1cm}= \sum_{l=0}^{N} q^{3{N-2l \choose 2}} {N+1\brack 2l+1}_{q^3} (-q;q^6)_{l+1}(-q^5;q^6)_l,
\end{align}
where $\chi(N)$ is 1 if $N$ is even, and 0 otherwise.
\end{theorem}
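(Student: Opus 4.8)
The plan is to prove all three polynomial identities in Theorem~\ref{Intro_Main_THM} simultaneously by a recurrence-in-$N$ argument, following the standard strategy for such finite Rogers--Ramanujan-type identities. First I would give names to the six polynomials involved: let $A_N(q)$, $B_N(q)$, $C_N(q)$ denote the three left-hand sides (the double and triple sums in $m,n$) of \eqref{Fin_Cap_1_1}, \eqref{Fin_Cap_2_1}, \eqref{Fin_Cap_3_1} respectively, and let $\tilde A_N$, $\tilde B_N$, $\tilde C_N$ denote the three right-hand sides (the single sums over $l$). The goal is to show $A_N=\tilde A_N$, $B_N=\tilde B_N$, $C_N=\tilde C_N$ for all $N\geq 0$. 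I would first verify the base cases $N=0$ (and likely $N=1$) directly, since the recurrences will presumably be second-order in $N$; here the sums collapse to just a few terms and the $q$-binomials are easily evaluated, so checking equality is routine.

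The main work is to establish matching recurrences for the two sides. On the right-hand side, the single sums involve $q$-binomial coefficients ${N\brack 2l}_{q^3}$, ${N+1\brack 2l+1}_{q^3}$ together with the Gaussian factor $q^{3\binom{N-2l}{2}}$, and these can be handled by the classical $q$-Pascal recurrences
\begin{equation*}
{N\brack k}_{q^3}={N-1\brack k}_{q^3}+q^{3(N-k)}{N-1\brack k-1}_{q^3}={N-1\brack k-1}_{q^3}+q^{3k}{N-1\brack k}_{q^3},
\end{equation*}
applied to each term and then reindexed. The delicate part will be the left-hand sides: here I would insert one of the two $q$-Pascal recurrences into the inner $q$-binomial ${3(N-2n-m)\brack m}_q$ or the outer ${2(N-2n-m)+n\brack n}_{q^3}$, shift the summation indices $m\mapsto m-1$ and $n\mapsto n-1$ appropriately, and use the explicit form of the exponent $Q(m,n)=2m^2+6mn+6n^2$ to match the $q$-powers that arise. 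Because $Q(m-1,n)=Q(m,n)-4m-6n+2$ and $Q(m,n-1)=Q(m,n)-12n-6m+6$, these shifts produce exactly the auxiliary linear terms $m$, $3n$, $4m+6n+1$, etc.\ appearing in the companion sums, which is the structural reason the three identities are coupled: the recurrence for one left-hand side naturally expresses it in terms of the others.

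I expect the crux of the argument to be \emph{finding the correct system of recurrences} that the three pairs of polynomials satisfy, rather than verifying any single algebraic step. Concretely, I anticipate deriving a linear system expressing $A_N,B_N,C_N$ in terms of $A_{N-1},B_{N-1},C_{N-1}$ (and perhaps $A_{N-2}$, etc.), and showing that $\tilde A_N,\tilde B_N,\tilde C_N$ satisfy the identical system; equality for all $N$ then follows by induction from the base cases. The two most error-prone points will be (i) bookkeeping the boundary contributions when the summation shifts change which terms are nonzero—in particular the parity correction term $q^{Q(0,N/2)+3N+1}\chi(N)$ in \eqref{Fin_Cap_3_1} should emerge precisely from the edge term $n=N/2$, $m=0$ of the second triple sum when $N$ is even, and tracking it carefully is essential; and (ii) keeping the three $q$-binomial arguments consistent after each index shift, since the arguments $3(N-2n-m)$, $3(N-2n-m)+2$, $3(N-2n-m)-1$ differ across the three identities and must line up after reindexing. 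Once the recurrence system is pinned down, the verification that both sides obey it is a finite, mechanical computation suitable for either hand or symbolic verification.
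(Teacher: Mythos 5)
Your high-level strategy --- show that both sides satisfy a common recurrence in $N$ and compare initial conditions --- is exactly the paper's, and your intuition about the $\chi(N)$ term is sound: it is a boundary correction tied to the failure of the $q$-Pascal rule \eqref{q_bin_rec} at $m=n=0$. The genuine gap is that the recurrence system, which you yourself identify as the crux, is never produced, and the shape you anticipate for it is wrong in ways that would block the execution. The three left-hand sides do not satisfy a coupled low-order system expressing $A_N,B_N,C_N$ in terms of $A_{N-1},B_{N-1},C_{N-1}$: in the paper each of them \emph{separately} satisfies one and the same uncoupled \emph{fourth-order} recurrence, namely the iterated Andrews recurrence \eqref{Andrews_iterated_recurrence} with $a=b=1$, so four initial values (e.g.\ $\L_0,\dots,\L_3$) must be verified for each identity, not one or two. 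Moreover, a coupling between \eqref{Fin_Cap_1_1} and \eqref{Fin_Cap_2_1}--\eqref{Fin_Cap_3_1} is structurally implausible: the first identity is a $G_{1,\cdot}$ (i.e.\ $\mathcal{D}_1$) statement while the other two are $G_{2,\cdot}$ statements, and these two families obey the same recurrences with different initial data but are never mixed by \eqref{AAGrec3}--\eqref{AAGrec1}. Finally, the term-level recurrences that allow one to sum over $m,n$ are six-term relations reaching back to $(N-4,m-3,n-2)$ (see \eqref{Summand_rec1}); single $q$-Pascal insertions cannot close such a system, since raising $N$ by $1$ raises the top entry of ${3(N-2n-m)\brack m}_q$ by $3$. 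The paper needed the creative-telescoping package qMultiSum to find and certify these relations, and it handled the right-hand sides not by Pascal manipulations but by quoting \eqref{polyAAG} and the qZeil-certified formula \eqref{G2_U_poly}, which identify them with $G_{1,3N-2}(1,1,q)$ and $G_{2,3N+1}(1,1,q)$.

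There is a second, subtler obstruction that you only partially anticipate. For \eqref{Fin_Cap_3_1} the summand recurrences are genuinely \emph{non-homogeneous} --- precisely because \eqref{q_bin_rec} fails at the origin --- and the inhomogeneities (see \eqref{Summand_rec3}--\eqref{Summand_rec4}) do not cancel within either double sum alone, nor within their sum. Only the full aggregate (first double sum) $+$ (second double sum) $+\ \chi(N)q^{Q(0,N/2)+3N+1}$ satisfies the homogeneous recurrence \eqref{Sum_rec_Li}. So the correction term is not merely an edge term to ``track'': without recognizing that homogeneity fails and that only this exact three-term combination restores it, the verification you describe as mechanical would not close. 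By contrast, the two double sums in \eqref{Fin_Cap_2_1} satisfy homogeneous term-level recurrences \eqref{Summand_rec2_3}, which is why that identity carries no correction term --- an asymmetry between \eqref{Fin_Cap_2_1} and \eqref{Fin_Cap_3_1} that your proposed scheme has no mechanism to predict.
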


In Section~\ref{Sec_MultiSums}, after proving Theorem~\ref{Intro_Main_THM}, we will show that in the limit $N\rightarrow \infty$ the identities \eqref{Fin_Cap_1_1}-\eqref{Fin_Cap_3_1} turn into the identities \eqref{Cap1}-\eqref{Cap3}, respectively.

The rest of this paper is organized as follows: In Section~\ref{Sec_recurrences}, we will discuss various polynomial representations of some generating functions related to $\mathcal{D}_{m,N}$ following developments in Alladi, Andrews, and Gordon's paper \cite{Refinement} and the authors' paper \cite{BerkovichUncu1}. Section~\ref{Sec_MultiSums} has the proof of Theorem~\ref{Intro_Main_THM} and other polynomial identities related to the Capparelli partition theorem. Combinatorial insights into the double sum generating functions that appear in this work will be given through Kur\c{s}ung\"oz style rules of motion in Section~\ref{Sec_Motions}. In particular we will show that \[\sum_{m,n\geq 0} q^{Q(m,n)}{3(N-2n-m+1)\brack m}_q {2(N-2n-m+1)+n\brack n}_{q^3}=\sum_{\pi\in\mathcal{D}_{1,3N+1}} q^{|\pi|},\] for any integer $N\geq 1$. In Section~\ref{Sec_Trinomials}, we present analytic and combinatorial results in connection to the $q$-trinomial coefficients, which are perfect companions to the earlier works of Andrews \cite{Andrews_Capparelli} and Alladi, Andrews and Gordon \cite{Refinement}. Section~\ref{Sec_Futher} is reserved for some highly intriguing $q$-series implications of this study. Possible future work and more on polynomial identities that imply Capparelli's identities are briefly mentioned in Section~\ref{Sec_Outlook}.

\section{Alladi, Andrews, Gordon polynomials and their variants}\label{Sec_recurrences}

For $m\in\{1,2\}$, let $G_{m,N}:=G_{m,N} (a,b,q)$ be the generating function for number of partitions where \begin{enumerate}[\itshape i.] 
\item parts are not equal to $m$, 
\item the difference between consecutive parts is greater or equal than $4$ unless either consecutive parts are both consecutive multiples of $3$ or add to a multiple of 6, 
\item largest part is less than or equal to $N$, \item the exponent of $a$ counts the number of parts congruent to $2$ modulo $3$, and 
\item the exponent of $b$ counts the number of parts congruent to $1$ modulo $3$.
\end{enumerate} 

Let $\nu_{i,M}(\pi)$ be the number of $i$ modulo $M$ parts in the partition $\pi$. Then, for $m\in\{1,2\}$, the above notation can be written as \[G_{m,N}:=G_{m,N} (a,b,q) := \sum_{\pi\in \mathcal{D}_{m,N}} a^{\nu_{2,3}(\pi)}b^{\nu_{1,3}(\pi)}q^{|\pi|},\] where $\mathcal{D}_{m,N}$ is defined as in the Section~\ref{Sec_intro}.

For a positive integer $N$, it is easy to see that these generating functions satisfy the recursion relations:
\begin{align}
\label{AAGrec3}G_{m,3N-1} &= G_{m,3(N-1)+1} + aq^{3(N-1)+2}G_{3(N-2)+1},\\
\label{AAGrec2}G_{m,3N} &= G_{m,3(N-1)+2} + q^{3N}G_{m,3(N-1)},\\
\label{AAGrec1}G_{m,3N+1} &= G_{m,3N} + bq^{3N+1}G_{m,3(N-1)} + abq^{6N}G_{m,3(N-2)+1}.
\end{align}
Moreover, the first four initial conditions 
\begin{equation}%\label{initial}
\nonumber\begin{array}{cccl} G_{m,-2}=\delta_{1,m}, & G_{m,-1}=1, & G_{m,0}=1, &\text{and } G_{m,1} = 1 + \delta_{2,m}bq,
\end{array} \end{equation}
where $\delta_{i,j}$ is the Kronecker delta function, define both generating function sequences uniquely.

In the original proof of Theorem~\ref{capparelliconj}, by combining the recurrences \eqref{AAGrec3}-\eqref{AAGrec1}, Andrews finds a third order recurrence for the $G_{1,3N+1}(1/t,t,q)$. Later in \cite{Refinement} this recurrence is refined and stated as \begin{align}
\nonumber G_{m,3N+1}(a,b,q) &= (1+q^{3N})G_{m,3(N-1)+1}\\
 \label{Andrews_recurrence}&+  (aq^{3N-1}+bq^{3N+1}+abq^{6N})G_{m,3(N-2)+1}\\ \nonumber
 &+ abq^{6N-3}(1-q^{3N-3})G_{m,3(N-3)+1}.
 \end{align}
 The recurrence \eqref{Andrews_recurrence} with the initial conditions 
 \begin{align}  \label{Initial_conditions_Min} G_{m,-2}&=\delta_{1,m}, \text{ and }\ G_{m,1} = 1 + \delta_{2,m}bq,
 \end{align} uniquely defines the sequence of the generating functions $G_{m,3N+1}$, for any non-negative $N$, and for $m\in\{1,2\}$.
 
 One can iterate this recurrence \eqref{Andrews_recurrence} once to get a recurrence of order 4. This is done by applying the recurrence once more for the term $q^{3N}G_{m,3(N-1)+1}$.
 \begin{align}
 \nonumber G_{m,3N+1}(a,b,q) &= G_{m,3(N-1)+1}\\
 \label{Andrews_iterated_recurrence}&+  (q^{3N}+q^{6N-3}+aq^{3N-1}+bq^{3N+1}+abq^{6N})G_{m,3(N-2)+1}\\ \nonumber
 \nonumber &+ (aq^{6N-4}+bq^{6N-2}+abq^{6N-3})G_{m,3(N-3)+1} \\
 \nonumber &+ abq^{9N-9}(1-q^{3N-6})G_{m,3(N-4)+1}.
 \end{align}
This recurrence together with the initial conditions of \eqref{Initial_conditions_Min} and \begin{align}
\label{Initial_condition_fourth}G_{m,4}\, &= 1 + q^3 + bq^4 + \delta_{1,m}(aq^2+abq^6) + \delta_{2,m}bq,
\end{align} defines the sequence of $G_{m,3N+1}$ for any non-negative $N$, and for $m\in\{1,2\}$.
The recurrence \eqref{Andrews_iterated_recurrence} will be used later. 
 
Analogous to \eqref{Andrews_recurrence}, we combine the same recurrences \eqref{AAGrec3}-\eqref{AAGrec1} to get the third order recurrence relation for the $G_{m,3N}$ functions. We have \begin{align}
\nonumber G_{m,3N}(a,b,q) &= (1+q^{3N})G_{m,3(N-1)}\\
\label{New_recurrence} &+  (aq^{3N-1}+bq^{3N-2}+abq^{6N-6})G_{m,3(N-2)}\\
\nonumber &+ abq^{6N-6}(1-q^{3N-6})G_{m,3(N-3)}.
 \end{align}
This can be done by writing \eqref{AAGrec3} in \eqref{AAGrec2} and solving the outcome recurrrence with \eqref{AAGrec1} together as a linear system. The recurrence \eqref{New_recurrence} with the initial conditions \begin{equation}\label{Init_cond_0mod3} \ G_{m,0} = 1,\text{ and }G_{m,3} = 1+q^3+\delta_{m,1}aq^2+\delta_{m,2}bq \end{equation} uniquely define this sequence of $G_{m,3N}$ polynomials for any non-negative $N$, and for $m\in\{1,2\}$.

One can show that for $m=1,2$, $G_{m,3N}$ and $G_{m,3N-2}$ have explicit polynomial representations. Hence, \eqref{AAGrec3} provides a polynomial representation for $G_{m,3N-1}$. In \cite{Refinement}, Alladi, Andrews and Gordon found the polynomial representation for the generating function $G_{1,3N+1}(a,b,q)$.
\begin{theorem}[Alladi, Andrews, Gordon, 1995]
\begin{equation}\label{polyAAG} G_{1,3N+1}(a,b,q) = \sum_{l=0}^{\lfloor (N+1)/2 \rfloor} q^{3{N-2l+1 \choose  2}} \genfrac{[}{]}{0pt}{}{N+1}{2l}_{q^3}(-aq^2,-bq^4;q^6)_l,\end{equation}where $\lfloor x\rfloor$ denotes the greatest integer $\leq x$.
\end{theorem}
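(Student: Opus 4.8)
The plan is to establish \eqref{polyAAG} by induction on $N$, using the third–order recurrence \eqref{Andrews_recurrence} specialized to $m=1$ (so that the parameters $a,b$ enter only through the factor $(-aq^2,-bq^4;q^6)_l$) together with the two initial conditions in \eqref{Initial_conditions_Min}. Write $F_N$ for the right–hand side of \eqref{polyAAG}. By the uniqueness assertion following \eqref{Andrews_recurrence} it suffices to check two things: that $F_N$ matches $G_{1,3N+1}$ at the two seed values, and that $F_N$ satisfies \eqref{Andrews_recurrence} for every $N\ge 1$. It is worth emphasizing why only two seeds are needed for a third–order recurrence: the coefficient $abq^{6N-3}(1-q^{3N-3})$ of the lowest term vanishes identically at $N=1$, so the recurrence at $N=1$ already determines $F_1$ from $F_0$ and $F_{-1}$ alone.

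For the base cases I would simply evaluate $F_N$ at $N=-1$ and $N=0$. In both cases $\lfloor (N+1)/2\rfloor=0$, so the sum collapses to its $l=0$ term and, since $\binom{0}{2}=\binom{1}{2}=0$, gives $F_{-1}={0\brack 0}_{q^3}=1=G_{1,-2}$ and $F_0={1\brack 0}_{q^3}=1=G_{1,1}$ (using $\delta_{2,1}=0$). As a consistency check one can also evaluate $F_1$ directly: the two surviving terms give $F_1=(1+q^3)+(aq^2+bq^4+abq^6)$, which is exactly what \eqref{Andrews_recurrence} produces at $N=1$.

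For the inductive step I would substitute the closed forms for $F_{N-1},F_{N-2},F_{N-3}$ into the right–hand side of \eqref{Andrews_recurrence} and reduce everything to a common basis of $q^3$–Gaussian polynomials by repeatedly applying the two $q^3$–Pascal rules
\begin{align*}
{N+1\brack 2l}_{q^3} &= {N\brack 2l}_{q^3} + q^{3(N-2l+1)}{N\brack 2l-1}_{q^3}\\
&= q^{6l}{N\brack 2l}_{q^3} + {N\brack 2l-1}_{q^3},
\end{align*}
together with the multiplicative recurrence
\[
(-aq^2,-bq^4;q^6)_l=(1+aq^{6l-4})(1+bq^{6l-2})\,(-aq^2,-bq^4;q^6)_{l-1}.
\]
The key point is that \eqref{polyAAG} involves only the even–bottom polynomials ${N+1\brack 2l}_{q^3}$, whereas descending from $N+1$ to $N,N-1,N-2$ introduces odd–bottom polynomials of the form ${M\brack 2l-1}_{q^3}$; the proof amounts to showing that (i) all odd–bottom contributions cancel among the four groups on the right–hand side, and (ii) the surviving even–bottom terms, after the factorial recurrence is used to interconvert $(-aq^2,-bq^4;q^6)_l$ and $(-aq^2,-bq^4;q^6)_{l-1}$, reassemble into $F_N$. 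One clean piece of this matching is transparent: the prefactor $q^{3N}$ turns the Gaussian weight $3\binom{N-2l}{2}$ of the $l$–th term of $F_{N-1}$ into $3\binom{N-2l+1}{2}+6l$, which is precisely the weight of the $q^{6l}{N\brack 2l}_{q^3}$ summand produced by the second Pascal rule.

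The main obstacle will be the $ab$–sector, where contributions from all of $F_{N-1}$, $F_{N-2}$, and $F_{N-3}$ overlap and a naive term–by–term identification fails. Here the factor $(1-q^{3N-3})$ multiplying $F_{N-3}$ is doing the essential work: the ``$1$'' cancels the odd–bottom remainder left by the $F_{N-1},F_{N-2}$ expansions, while the ``$-q^{3N-3}$'' removes the doubly–counted even–bottom $ab$–contribution, so that exactly one copy of the $l$–th term of $F_N$ survives. I would carry this out by fixing $l$, extracting the coefficient of each monomial relative to $(-aq^2,-bq^4;q^6)_{l-1}$ (that is, the $1$–, $aq^{6l-4}$–, $bq^{6l-2}$–, and $abq^{12l-6}$–parts in turn), and checking each separately, finishing with the boundary indices $l=0$ and $l=\lfloor (N+1)/2\rfloor$ where the edge $q^3$–binomials either vanish or equal $1$. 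An alternative, perhaps tidier, route—hinted at in the remark preceding \eqref{polyAAG}—is to run a simultaneous induction on the coupled first–order system \eqref{AAGrec3}–\eqref{AAGrec1}, carrying closed forms for $G_{1,3N}$ and $G_{1,3N-2}$ alongside, which splits the single intricate third–order verification into three individually lighter checks.
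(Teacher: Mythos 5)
Your proposal follows essentially the same route as the paper: the paper establishes polynomial representations of exactly this kind (e.g.\ \eqref{G2_U_poly}) by showing that the explicit sum satisfies the third-order recurrence \eqref{Andrews_recurrence} and then matching the two initial conditions \eqref{Initial_conditions_Min}, which determine the sequence uniquely because the coefficient $abq^{6N-3}(1-q^{3N-3})$ vanishes at $N=1$ --- the same uniqueness observation you make. The only difference is mechanical: the paper certifies the recurrence by the $q$-Zeilberger algorithm, whereas you sketch a by-hand verification via $q^3$-Pascal rules; your seeds $F_{-1}=1=G_{1,-2}$, $F_{0}=1=G_{1,1}$ and the weight bookkeeping $3\binom{N-2l}{2}+3N=3\binom{N-2l+1}{2}+6l$ are correct, though the cancellation of the odd-bottom terms is outlined rather than carried out in full.
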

In fact, we discovered new formulas for all three remaining generating functions. In particular, let
\begin{equation}\label{U_poly} U(a,b,q,N) :=\sum_{l=0}^{\lfloor N/2 \rfloor} q^{3{N-2l \choose  2}} \genfrac{[}{]}{0pt}{}{N+1}{2l+1}_{q^3}(-aq^5;q^6)_l(-bq;q^6)_{l+1}.\end{equation} The $q$-Zeilberger algorithm implemented by Paule and Riese \cite{qZeil} automatically proves that $U$ satisfies the recurrence \eqref{Andrews_recurrence}. We prove that $ G_{2,3N+1}(a,b,q) = U(a,b,q,N),$ by noting that \[U(a,b,q,-1)= 0,\text{ and }U(a,b,q,0)= 1+bq,\] initial conditions uniquely define the sequence of $U(a,b,q,N)$ for any non-negative $N$ and by comparing these initial conditions with \eqref{Initial_conditions_Min}. Hence, 
\begin{theorem}
\begin{equation}\label{G2_U_poly}
 G_{2,3N+1}(a,b,q) =\sum_{l=0}^{\lfloor N/2 \rfloor} q^{3{N-2l \choose  2}} \genfrac{[}{]}{0pt}{}{N+1}{2l+1}_{q^3}(-aq^5;q^6)_l(-bq;q^6)_{l+1}.
\end{equation}
\end{theorem}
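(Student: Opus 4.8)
The plan is to establish \eqref{G2_U_poly} by the uniqueness argument already in force for these polynomials: by \eqref{Andrews_recurrence} together with \eqref{Initial_conditions_Min}, the sequence $G_{2,3N+1}(a,b,q)$ is completely determined. It therefore suffices to prove that the right-hand side $U(a,b,q,N)$ of \eqref{U_poly} satisfies the same third-order recurrence \eqref{Andrews_recurrence} and reproduces the same two initial values; uniqueness then forces $G_{2,3N+1}(a,b,q)=U(a,b,q,N)$ for all $N\ge0$.

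The initial data are immediate from \eqref{U_poly}. For $N=-1$ the range $0\le l\le\lfloor N/2\rfloor=-1$ is empty, so $U(a,b,q,-1)=0=\delta_{1,2}=G_{2,-2}$. For $N=0$ only the $l=0$ term survives, and there $q^{3{0 \choose 2}}=1$, $\genfrac{[}{]}{0pt}{}{1}{1}_{q^3}=1$, $(-aq^5;q^6)_0=1$ and $(-bq;q^6)_1=1+bq$, so $U(a,b,q,0)=1+bq=G_{2,1}$. I would also flag the bookkeeping point that a third-order recurrence ordinarily needs three seeds while \eqref{Initial_conditions_Min} supplies only two: the coefficient $abq^{6N-3}(1-q^{3N-3})$ of the lowest term in \eqref{Andrews_recurrence} vanishes at $N=1$, so the first application of the recurrence (producing the $N=1$ value) never sees $U(a,b,q,-2)$, and the two values above genuinely launch the sequence.

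The substantive step, and the principal obstacle, is verifying that $U(a,b,q,N)$ satisfies \eqref{Andrews_recurrence}. Writing $U(a,b,q,N)=\sum_l u_l(N)$ with
\[
u_l(N)=q^{3{N-2l \choose 2}}\genfrac{[}{]}{0pt}{}{N+1}{2l+1}_{q^3}(-aq^5;q^6)_l(-bq;q^6)_{l+1},
\]
I would show that the four-term combination
\[
U(N)-(1+q^{3N})U(N-1)-(aq^{3N-1}+bq^{3N+1}+abq^{6N})U(N-2)-abq^{6N-3}(1-q^{3N-3})U(N-3)
\]
vanishes, where the arguments $a,b,q$ are suppressed. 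The useful structural feature is that the entire $a,b$-dependence resides in the $N$-independent factor $(-aq^5;q^6)_l(-bq;q^6)_{l+1}$, whereas the recurrence coefficients are of degree at most one in each of $a$ and $b$. Using the one-step factorizations $(-aq^5;q^6)_{l+1}=(1+aq^{6l+5})(-aq^5;q^6)_l$ and $(-bq;q^6)_{l+1}=(1+bq^{6l+1})(-bq;q^6)_l$, the stray factors $a,b,ab$ carried by the coefficients can be absorbed into index shifts $l\mapsto l\pm1$ of these Pochhammer products. After re-indexing so that all terms are expressed against a common system of Pochhammer products, the combination reduces to a finite family of identities involving only the $q^3$-binomials $\genfrac{[}{]}{0pt}{}{N+1}{2l+1}_{q^3}$ and the powers $q^{3{N-2l \choose 2}}$, and these close under the standard $q$-Pascal recurrences together with the elementary exponent relation $3{N-2l \choose 2}-3{N-1-2l \choose 2}=3(N-1-2l)$.

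In practice the most efficient route is the one the authors indicate: one feeds the summand $u_l(N)$ to the Paule--Riese implementation of $q$-Zeilberger \cite{qZeil}, which returns a rational certificate proving \eqref{Andrews_recurrence} outright. A hand proof along the lines above is possible but offers no conceptual gain---the difficulty is confined to the bookkeeping of index shifts and $q$-power exponents across the four terms, which is mechanical yet error-prone---so the automated certificate is the tool of choice here. Once both the recurrence \eqref{Andrews_recurrence} and the initial conditions \eqref{Initial_conditions_Min} are matched, uniqueness gives $G_{2,3N+1}(a,b,q)=U(a,b,q,N)$, which is \eqref{G2_U_poly}.
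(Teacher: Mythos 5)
Your proposal is correct and follows essentially the same route as the paper: the authors likewise invoke the Paule--Riese $q$-Zeilberger implementation to certify that $U(a,b,q,N)$ satisfies \eqref{Andrews_recurrence}, then match the initial values $U(a,b,q,-1)=0$ and $U(a,b,q,0)=1+bq$ against \eqref{Initial_conditions_Min} and conclude by uniqueness. Your additional remark that the coefficient $abq^{6N-3}(1-q^{3N-3})$ vanishes at $N=1$, so that two seeds genuinely suffice to launch the third-order recurrence, is a bookkeeping point the paper leaves implicit but is precisely the justification needed for the uniqueness claim.
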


Moreover, we define the polynomials 
\begin{align}
\label{S_poly}S_N:=S(a,b,q,N) &:= \sum_{l=0}^{\lfloor N/2 \rfloor} q^{3{N-2l \choose  2}} \genfrac{[}{]}{0pt}{}{N+1}{2l+1}_{q^3}(-aq^2,-bq^4;q^6)_{l},\\
\label{T_poly}T_N:=T(a,b,q,N) &:= \sum_{l=0}^{\lfloor N/2 \rfloor} q^{3{N-2l \choose  2}} \genfrac{[}{]}{0pt}{}{N+1}{2l+1}_{q^3}(-aq^5,-bq;q^6)_{l}.
\end{align}
The $q$-Zeilberger algorithm proves that these polynomials satisfy the recurrences \vspace{.1cm}
\begin{align}
\label{S_rec}S_N &= (1+q^{3N})S_{N-1} + q^{3N-6}(bq^4+aq^2+abq^{3N})S_{N-2}+abq^{6N-9}(1-q^{3(N-1)})S_{N-3},\\[-1,5ex]\nonumber\\
\label{T_rec}T_N &= (1+q^{3N})T_{N-1} + q^{3N-6}(bq+aq^5+abq^{3N})T_{N-2}+abq^{6N-9}(1-q^{3(N-1)})T_{N-3},
\end{align} respectively.
We prove the following. 
\begin{theorem}\label{THM_0mod3}
\begin{align}
\label{G_0mod3_S}G_{1,3N}(a,b,q) &= S(a,b,q,N) + aq^{3N-1} S(a,b,q,N-1),\\[-1,5ex]\nonumber\\
\label{G_0mod3_T}G_{2,3N}(a,b,q) &= T(a,b,q,N) + bq^{3N-2} T(a,b,q,N-1).
\end{align} 
\end{theorem}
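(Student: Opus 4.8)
The plan is to lean on the uniqueness statement recorded just after \eqref{New_recurrence}: the recurrence \eqref{New_recurrence} together with the two initial values in \eqref{Init_cond_0mod3} determines the sequence $G_{m,3N}$ completely. Although \eqref{New_recurrence} has order three, the coefficient $abq^{6N-6}(1-q^{3N-6})$ of $G_{m,3(N-3)}$ vanishes at $N=2$, so $G_{m,6}$ is already fixed by $G_{m,0}$ and $G_{m,3}$, and induction propagates to all larger $N$; this is why only two initial values are needed. Consequently, to prove \eqref{G_0mod3_S} it suffices to show that the right-hand side $R_N:=S_N+aq^{3N-1}S_{N-1}$ both (i) satisfies \eqref{New_recurrence} with $m=1$ and (ii) agrees with $G_{1,3N}$ at $N=0$ and $N=1$. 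The argument for \eqref{G_0mod3_T} using the analogous combination $T_N+bq^{3N-2}T_{N-1}$ is entirely parallel, with \eqref{T_rec} playing the role of \eqref{S_rec}.

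Checking the initial conditions is immediate from \eqref{S_poly}: the empty-sum convention gives $S_{-1}=0$, while $S_0=1$ and $S_1=1+q^3$, so that $R_0=S_0=1=G_{1,0}$ and $R_1=S_1+aq^2S_0=1+q^3+aq^2=G_{1,3}$, matching \eqref{Init_cond_0mod3} for $m=1$. The corresponding computation from \eqref{T_poly} uses $T_0=1$ and $T_1=1+q^3$ to reproduce $G_{2,3}=1+q^3+bq$.

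The heart of the matter, and the only place I expect real work, is verifying that $R_N$ obeys \eqref{New_recurrence}. I would substitute $R_k=S_k+aq^{3k-1}S_{k-1}$ into both sides, turning the asserted recurrence into a linear relation among the five consecutive values $S_N,S_{N-1},\dots,S_{N-4}$. These are not independent: applying the already-established recurrence \eqref{S_rec} (certified by the $q$-Zeilberger algorithm \cite{qZeil}) to $S_N$, and its shift $N\mapsto N-1$ to $S_{N-1}$, reduces everything to the three sequences $S_{N-2},S_{N-3},S_{N-4}$. One then collects the coefficients of these three — polynomial expressions in $a,b$ and powers of $q$ — and checks that each vanishes identically. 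The bookkeeping is delicate precisely because the correction term $aq^{3N-1}S_{N-1}$ is exactly what converts the middle coefficient $bq^{3N-2}+aq^{3N-4}+abq^{6N-6}$ of \eqref{S_rec} into the middle coefficient $aq^{3N-1}+bq^{3N-2}+abq^{6N-6}$ of \eqref{New_recurrence}; a premature term-by-term comparison (before reducing $S_{N-1}$ as well) appears to leave a residual $aq^{3N-1}$ on the $S_{N-1}$ coefficient, which is reabsorbed only after the second application of \eqref{S_rec}.

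In practice I would either run this finite reduction by hand, or — mirroring the treatment of $U$ and $G_{2,3N+1}$ above — feed the single combined sum $S_N+aq^{3N-1}S_{N-1}$ directly to the $q$-Zeilberger algorithm and let it certify \eqref{New_recurrence} automatically, with the same routine applied to $T_N+bq^{3N-2}T_{N-1}$ settling \eqref{G_0mod3_T}. Either way the sole obstacle is the recurrence verification; once that and the two initial conditions are secured, the uniqueness noted in the first paragraph closes the proof.
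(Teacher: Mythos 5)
Your proposal is correct and follows essentially the same route as the paper's own proof: show that $X_N=S_N+aq^{3N-1}S_{N-1}$ (resp.\ $Y_N=T_N+bq^{3N-2}T_{N-1}$) satisfies \eqref{New_recurrence} by combining \eqref{S_rec} (resp.\ \eqref{T_rec}) with its shift $N\mapsto N-1$, then match the initial conditions $X_0=1$, $X_1=1+aq^2+q^3$ (resp.\ $Y_0=1$, $Y_1=1+bq+q^3$) against \eqref{Init_cond_0mod3} and invoke uniqueness. Your elimination scheme does close up exactly as claimed---after substituting the definition of $X_N$ into \eqref{New_recurrence} and removing $S_N$ via \eqref{S_rec}, the residue is precisely $aq^{3N-1}$ times the shifted \eqref{S_rec}, so all coefficients vanish---and your observation that the factor $1-q^{3N-6}$ vanishing at $N=2$ is what lets two initial values pin down a third-order recurrence is a point the paper leaves implicit.
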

To prove Theorem~\ref{THM_0mod3} one needs to show that the right-hand sides of \eqref{G_0mod3_S} and \eqref{G_0mod3_T} both satisfy the recurrence \eqref{New_recurrence}. To this end, one needs to combine the recurrence \eqref{S_rec} (and \eqref{T_rec}) with a shifted one of the same recurrence, and manually write $S_N$ (and $T_N$) as an expression in $X_N= S_N + aq^{3N-1} S_{N-1}$ (and $Y_N= T_N + bq^{3N-2} T_{N-1}$, respectively). This way one sees that the third order recurrences for $X_N$ and $Y_N$ are the same as \eqref{New_recurrence}. Finally, one compares the initial conditions 
\[\begin{array}{ll}
%S(a,b,q,-1) + aq^{-4} S(a,b,q,-2) = 0, &T(a,b,q,-1) + bq^{-5} T(a,b,q,-2) = 0, \\
X_0 = S(a,b,q,0) + aq^{-1} S(a,b,q,-1) = 1, &Y_0=T(a,b,q,0) + bq^{-2} T(a,b,q,-1) = 1, \\[-1.5ex]\\
X_1 = S(a,b,q,1) + aq^{2} S(a,b,q,0) = 1+aq^2+q^3,&Y_1= T(a,b,q,1) + bq T(a,b,q,0) = 1+bq+q^3,
\end{array}\] which uniquely determine $X_N$ and $Y_N$ for any non-negative $N$, and for $m\in\{1,2\}$,
with the initial conditions in \eqref{Init_cond_0mod3} to finish the proof of Theorem~\ref{THM_0mod3}.

\section{Recursive proof of Theorem~\ref{Intro_Main_THM} and related results}\label{Sec_MultiSums}

In order to prove the identities of Theorem~\ref{Intro_Main_THM}, we need to show that both sides of the identities satisfy the same recurrences with the same initial conditions. In Section~\ref{Sec_recurrences} we have already shown general recurrences \eqref{Andrews_recurrence} (and \eqref{Andrews_iterated_recurrence}) for the right-hand side sums of the identities \eqref{Fin_Cap_1_1}-\eqref{Fin_Cap_3_1} with extra variables $a$ and $b$. In this section, we will be using these recurrences with $a=b=1$.

For the left-hand sides of the identities \eqref{Fin_Cap_1_1}-\eqref{Fin_Cap_3_1} we employ the Mathematica package qMultiSum developed by Riese \cite{qMultiSum}. This package finds and proves recurrences for multi-sums under the assumption that the recurrences are always homogeneous.

We start by proving the identity \eqref{Fin_Cap_1_1}. The left-hand side summand \begin{equation}\label{Summand_Fin_1_1}F_{N,m,n}(q):=q^{Q(m,n)}{3(N-2n-m)\brack m}_q {2(N-2n-m)+n\brack n}_{q^3},\end{equation} where $Q(m,n)$ is as in \eqref{Qnm}, satisfies the recurrence\vspace{.1cm} \begin{align}
\nonumber F_{N,m,n}(q) &= F_{N-1,m,n}(q)  + q^{6N-9}(1+q^3) F_{N-2,m,n-1}(q) + q^{3N-4}(1+q+q^2)F_{N-2,m-1,n}(q)\\[-1.5ex]\nonumber\\
\label{Summand_rec1}&+q^{6N+10}(1+q+q^2)F_{N-3,m-2,n}(q) - q^{12N-27}F_{N-4,m,n-2}(q)+ q^{9N-18}F_{N-4,m-3,n}(q),
\end{align} for any $N\geq 4$ and $m,n \in \Z$. Summing this recurrence with respect to $m$ and $n$ over $\Z$ and recalling the $q$-binomial coefficients vanish when $m$ or $n$ is negative \eqref{Binom_def}. For $N\geq 4$ we have 
\begin{align}\nonumber\L_N(q) &= \L_{N-1}(q) + q^{3N-4}(1+q+q^2+q^{3N-2}+q^{3N-5})\L_{N-2}(q)\\[-1.5ex]\nonumber\\
\label{Sum_rec1}&+ q^{6N-10}(1+q+q^2)\L_{N-3}(q) + q^{9N-18}(1-q^{3N-9})\L_{N-4}(q),
\end{align} where \[\L_N(q)= \sum_{m,n\geq 0} F_{N,m,n}(q)\] is the left-hand side of \eqref{Fin_Cap_1_1}. This matches perfectly with the recurrence \eqref{Andrews_iterated_recurrence} with $N\mapsto N-1$ and $a=b=1$. We need to check 4 initial conditions for $\L_N(q)$: \begin{align*}
\L_0(q) &= \L_1(q) = 1,\\ 
\L_2(q) &=1+{q}^{4}+{q}^{3}+{q}^{2}+{q}^{6},\\
\L_3(q) &= 1+q^2+q^3+q^4+q^5+2q^6+q^7+q^8+2q^9+q^{10}+q^{12}.
\end{align*} These initial conditions match the values of the $G_{1,-2}(1,1,q)$, $G_{1,1}(1,1,q)$, $G_{1,4}(1,1,q)$ and $G_{1,7}(1,1,q)$, respectively. The recurrence \eqref{Sum_rec1}, together with these initial conditions, shows that the $\L_N$ can now be stated as \[\sum_{m,n\geq 0} q^{Q(m,n)}{3(N-2n-m)\brack m}_q {2(N-2n-m)+n\brack n}_{q^3} = G_{1,3N-2}(1,1,q).\] This together with \eqref{polyAAG} proves the identity \eqref{Fin_Cap_1_1}.

We go through the same argument for the identities \eqref{Fin_Cap_2_1} and \eqref{Fin_Cap_3_1}. Let ${}_i\hat{F}_{N,m,n}(q)$ for $i \in \{1,2,3,4\}$ represent the first and second summands of the double sums of \eqref{Fin_Cap_2_1} and first and second summands of the double sums of \eqref{Fin_Cap_3_1} in this order. One observes that for $i=1,$ and 2, ${}_i\hat{F}_{N,m,n}(q)$ satisfy the recurrence\vspace{.1cm} \begin{align}
\nonumber {}_i\hat{F}_{N,m,n}(q) &= {}_i\hat{F}_{N-1,m,n}(q) + q^{6N-3}(1+q^3) {}_i\hat{F}_{N-2,m,n-1}(q) + q^{3N-1}(1+q+q^2){}_i\hat{F}_{N-2,m-1,n}(q)\\[-1.5ex]\nonumber\\
\label{Summand_rec2_3}&+q^{6N-4}(1+q+q^2){}_i\hat{F}_{N-3,m-2,n}(q) - q^{12N-15}{}_i\hat{F}_{N-4,m,n-2}(q)+ q^{9N-9}{}_i\hat{F}_{N-4,m-3,n}(q),
\end{align} for any $N\geq 4$ and $m,n \in \Z$. Summands  \begin{align*}
{}_3{F}_{N,m,n}(q) &= q^{Q(m,n)+m}{3(N-2n-m)+2\brack m}_q {2(N-2n-m)+n\brack n+2}_{q^3},\\
{}_4{F}_{N,m,n}(q) &= q^{Q(m,n)+4m+6n+1}{3(N-2n-m)-1\brack m}_q {2(N-2n-m)+n\brack n}_{q^3}
\end{align*} also satisfy this recurrence wth non-homogeneous corrections. This slight inconvenience is not directly seen from the computer implementation due to the homogeneity assumption of the qMultiSum package. The source of the non-homogeneous terms is the nonuniversality of the original $q$-binomial recurrences: \begin{equation}\label{q_bin_rec}
{n+m\brack m}_q = {n+m-1\brack m}_q + q^{n} {n+m-1\brack m-1}_q.
\end{equation}As clear from the following example, this recurrence fails at $m=n=0$: \[1 = {0\brack 0 }_q \not= {-1\brack0}_q + {-1\brack-1}_q=0+0. \] This gives rise to the non-homogeneous terms in the recurrences. The functions ${}_3\hat{F}_{N,m,n}(q)$ and ${}_4\hat{F}_{N,m,n}(q)$ satisfy the recurrences\vspace{.1cm}
\begin{align}
\nonumber {}_3\hat{F}_{N,m,n}(q) &= {}_3\hat{F}_{N-1,m,n}(q) + q^{6N-3}(1+q^3) {}_3\hat{F}_{N-2,m,n-1}(q) + q^{3N-1}(1+q+q^2){}_3\hat{F}_{N-2,m-1,n}(q)\\[-1.5ex]\nonumber\\
\label{Summand_rec3}&+q^{6N-4}(1+q+q^2){}_3\hat{F}_{N-3,m-2,n}(q) - q^{12N-15}{}_3\hat{F}_{N-4,m,n-2}(q)+ q^{9N-9}{}_3\hat{F}_{N-4,m-3,n}(q)\\[-1.5ex]\nonumber\\\nonumber  &\hspace{-1cm}+ \chi(N)\left(\delta_{m,0}\delta_{n,N/2}q^{\frac{3}{2}N^2} + \delta_{m,2}\delta_{N/2-1}q^{\frac{3}{2}N^2+4} \right)+ \chi(N+1)\delta_{m,1}\delta_{n,(N-1)/2}(1+q)q^{\frac{3N^2 +1}{2}},\\
\intertext{and}
\nonumber {}_4\hat{F}_{N,m,n}(q) &= {}_4\hat{F}_{N-1,m,n}(q) + q^{6N-3}(1+q^3) {}_4\hat{F}_{N-2,m,n-1}(q) + q^{3N-1}(1+q+q^2){}_4\hat{F}_{N-2,m-1,n}(q)\\[-1.5ex]\nonumber\\
\label{Summand_rec4}&+q^{6N-4}(1+q+q^2){}_4\hat{F}_{N-3,m-2,n}(q) - q^{12N-15}{}_4\hat{F}_{N-4,m,n-2}(q)+ q^{9N-9}{}_4\hat{F}_{N-4,m-3,n}(q)\\[-1.5ex]\nonumber\\\nonumber  &\hspace{-1cm}+ \chi(N)\delta_{m,1}\delta_{n,N/2-1}(1+q)q^{\frac{3}{2}N^2+1} + \chi(N+1)\left(\delta_{m,0}\delta_{n,(N-1)/2}q^{\frac{3N^2 -1}{2}}+\delta_{m,2}\delta_{n,(N-3)/2} q^{\frac{3N^2-1}{2}+4}\right),
\end{align} for any $N\geq 4$, and for any $m,n\in\Z$, where $\chi(N)$ is as its defined in Theorem~\ref{Intro_Main_THM}.

Let \[{}_i\S_{N}(q) = \sum_{m,n\geq 0} {}_i\hat{F}_{N,m,n}(q),\] for $i \in\{1,2,3,4\}$. From the recurrence \eqref{Summand_rec2_3} and \eqref{Summand_rec4} one sees that each ${}_i\S_{N}(q)$ satisfy the recurrences
\begin{align}\nonumber {}_i\S_{N}(q)  &= {}_i\S_{N-1}(q) + q^{3N-1}(1+q+q^2+q^{3N-2}+q^{3N+1}){}_i\S_{N-2}(q)\\[-1.5ex]\nonumber\\\label{Sum_rec_1_2_3}
&+q^{6N-4}(1+q+q^2){}_i\S_{N-3}(q) +q^{9N-9}(1-q^{3N-6}){}_i\S_{N-4}(q),\\\intertext{for $i=1$ and $2$, and the non-homogeneous recurrences}
\nonumber{}_3\S_{N}(q)  &= {}_3\S_{N-1}(q) + q^{3N-1}(1+q+q^2+q^{3N-2}+q^{3N+1}){}_3\S_{N-2}(q)\\[-1.5ex]\nonumber\\
\label{Sum_rec_2_4}&+q^{6N-4}(1+q+q^2){}_3\S_{N-3}(q) +q^{9N-9}(1-q^{3N-6}){}_3\S_{N-4}(q)\\[-1.5ex]\nonumber\\
\nonumber&+\chi(N)(1+q^4)q^{\frac{3}{2} N^2 } + \chi(N+1)(1+q)q^\frac{3N^2 +3}{2},\\\intertext{and}
\nonumber{}_4\S_{N}(q)  &= {}_4\S_{N-1}(q) + q^{3N-1}(1+q+q^2+q^{3N-2}+q^{3N+1}){}_4\S_{N-2}(q)\\[-1.5ex]\nonumber\\
\label{Sum_rec_2_4}&+q^{6N-4}(1+q+q^2){}_4\S_{N-3}(q) +q^{9N-9}(1-q^{3N-6}){}_4\S_{N-4}(q)\\[-1.5ex]\nonumber\\
\nonumber&+\chi(N)(1+q)q^{\frac{3}{2} N^2 + 1} + \chi(N+1)(1+q^4)q^\frac{3N^2 -1}{2}.
\end{align} Note that \eqref{Sum_rec_1_2_3} is the same recurrence as \eqref{Andrews_iterated_recurrence} with $a=b=1$. Let \[{}_1\hat{\L}_N(q) = {}_1\S_{N}(q)+{}_2\S_{N}(q),\text{ and }{}_2\hat{\L}_N(q) = {}_3\S_{N}(q)+{}_4\S_{N}(q)+\chi(N)q^{Q(0,N/2)+3N+1}.\] For any $N\ge 4,$ polynomials ${}_1\L_N(q)$ and ${}_2\hat{\L}_N(q)$ both satisfy the recurrence
\begin{align}\nonumber {}_i\hat{\L}_{N}(q)  &= {}_i\hat{\L}_{N-1}(q) + q^{3N-1}(1+q+q^2+q^{3N-2}+q^{3N+1}){}_i\hat{\L}_{N-2}(q)\\[-1.5ex]\nonumber\\\label{Sum_rec_Li}
&+q^{6N-4}(1+q+q^2){}_i\hat{\L}_{N-3}(q) +q^{9N-9}(1-q^{3N-6}){}_i\hat{\L}_{N-4}(q).
\end{align} Note that the recurrence \eqref{Sum_rec_Li} is the same as \eqref{Andrews_iterated_recurrence}, where $a=b=1$. Comparing the initial conditions
\begin{equation}
{}_i\hat{\L}_{-1}(q) = 0, \hspace{.5cm} {}_i\hat{\L}_0(q) = 1+q,\text{ and }{}_i\hat{\L}_1(q) = 1+q+q^3+q^4,
\end{equation} together with the recurrences \eqref{Sum_rec_Li} and \eqref{Andrews_iterated_recurrence}, and with the initial conditions \eqref{Initial_conditions_Min}, \eqref{Initial_condition_fourth} with $m=2$ and $a=b=1$, shows that the left-hand sides of \eqref{Fin_Cap_2_1} and \eqref{Fin_Cap_3_1} are both equal to $G_{2,3N+1}$.

Using $q$-Gauss sum \cite[p.236, II.8]{Gasper_Rahman} and Euler's Partition Theorem \cite{Theory_of_Partitions} we see that \begin{equation}\label{odd_or_even_identity_which_doesnt_matter}
\sum_{\substack{t\geq 0\\ t\equiv a\text{ $($mod }2)}} \frac{q^{t \choose 2}}{(q;q)_t}  = \frac{1}{(q;q^2)_\infty} = (-q;q)_\infty
\end{equation} for $a \in \{0,1\}$.

The limit \eqref{Binom_limit} with the identity \eqref{odd_or_even_identity_which_doesnt_matter} is enough to show that as $N$ tends to infinity the identities \eqref{Fin_Cap_1_1}-\eqref{Fin_Cap_3_1} turn into \eqref{Cap1}-\eqref{Cap3}, respectively.

In the spirit of \eqref{Fin_Cap_1_1}, the following two generating function interpretations and the following analytic identities are true.

\begin{theorem}\label{THM_C1_23} For $Q(m,n):= 2m^2 + 6mn + 6n^2$
\begin{align}
\nonumber\sum_{\pi\in\mathcal{D}_{1,3N}} q^{|\pi|} &= \sum_{m,n\geq 0} q^{Q(m,n)} {3(N-2n-m)+2 \brack m }_q {2(N-2n-m)+n+1\brack n}_{q^3}\\
&= \sum_{l=0}^N q^{3{N-2l \choose 2}} {N+1 \brack 2l+1}_{q^3}(-q^2,-q^4;q^6)_l + q^{3N-1}\sum_{l=0}^N {q^{3{N-2l-1\choose 2}}}{N \brack 2l+1}_{q^3}(-q^2,-q^4;q^6)_l,\\[-1.5ex]\nonumber\\
\nonumber \sum_{\pi\in\mathcal{D}_{1,3N+2}} q^{|\pi|} & = \sum_{m,n\geq0 } q^{Q(m,n)} {3(N-2n-m) \brack m }_q {2(N-2n-m)+n\brack n}_{q^3}\\
&= \sum_{l=0}^N q^{3{N-2l \choose 2}} {N \brack 2l}_{q^3}(-q^2,-q^4;q^6)_l + q^{3N-1}\sum_{l=0}^N{q^{3{N-1\choose 2l}}}(-q^2,-q^4;q^6)_l.
\end{align}
\end{theorem}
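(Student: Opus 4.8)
The plan is to read both left-hand sides as $a=b=1$ specializations of the Alladi--Andrews--Gordon generating functions and then to rerun, on each double sum, the recurrence-matching scheme already used for Theorem~\ref{Intro_Main_THM}. Indeed the definition of $G_{m,N}$ gives $\sum_{\pi\in\mathcal{D}_{1,3N}}q^{|\pi|}=G_{1,3N}(1,1,q)$ and $\sum_{\pi\in\mathcal{D}_{1,3N+2}}q^{|\pi|}=G_{1,3N+2}(1,1,q)=G_{1,3(N+1)-1}(1,1,q)$. So in each identity it suffices to show that the displayed double sum and the displayed single sum both equal the appropriate $G_{1,\bullet}(1,1,q)$; transitivity then delivers the stated chain of equalities.

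For the first identity I would put $\mathcal{F}_{N,m,n}(q):=q^{Q(m,n)}{3(N-2n-m)+2\brack m}_q{2(N-2n-m)+n+1\brack n}_{q^3}$ and hand it to Riese's qMultiSum package to produce a recurrence in $N$ (with shifts in $m$ and $n$), exactly as was done for the summand \eqref{Summand_Fin_1_1}. Summing over $m,n\geq0$, and using that the $q$-binomials vanish outside the admissible range, I expect the double sum $\mathcal{L}_N(q):=\sum_{m,n\geq0}\mathcal{F}_{N,m,n}(q)$ to obey (an iterate of) the third-order recurrence \eqref{New_recurrence} satisfied by $G_{1,3N}(1,1,q)$. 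A comparison of the first few values of $\mathcal{L}_N$ with those of $G_{1,3N}(1,1,q)$ furnished by \eqref{New_recurrence} and \eqref{Init_cond_0mod3} then gives $\mathcal{L}_N(q)=G_{1,3N}(1,1,q)$, which is the first equality. The second equality is then immediate: by \eqref{S_poly} the two single sums on the right are precisely $S(1,1,q,N)$ and $q^{3N-1}S(1,1,q,N-1)$ (the terms with $l>\lfloor N/2\rfloor$ vanish because the $q^3$-binomials do), and their sum is $G_{1,3N}(1,1,q)$ by Theorem~\ref{THM_0mod3} at $a=b=1$.

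The second identity is handled the same way on the double-sum side---qMultiSum, summation over $m,n\geq0$, recurrence matching, and base cases---now against the recurrence \eqref{Andrews_iterated_recurrence} obeyed by the $3\bullet+1$ family and hence, via \eqref{AAGrec3}, by $G_{1,3(N+1)-1}(1,1,q)$. The single-sum side is reached most cleanly by specializing $a=1$ in \eqref{AAGrec3} to write $G_{1,3N+2}=G_{1,3N+1}+q^{3N+2}G_{1,3(N-1)+1}$ and then substituting the polynomial representation \eqref{polyAAG} (at $a=b=1$, with parameters $N$ and $N-1$) into the two summands, after which one collects the two resulting $q^3$-binomial sums together with their prefactors.

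The main obstacle will be the non-homogeneous boundary corrections. As the authors already note for \eqref{Summand_rec3}--\eqref{Summand_rec4}, qMultiSum returns its recurrences under a homogeneity assumption, so the raw output is valid only in the bulk; the genuine recurrence picks up Kronecker-delta terms because the $q$-binomial recurrence \eqref{q_bin_rec} fails at the origin. The delicate step is therefore to isolate, by hand, the finitely many summands near $m=n=0$ that are mis-counted, to package them into explicit corrections, and to confirm that upon summation over $m,n\geq0$ these corrections cancel---so that the clean homogeneous recurrences \eqref{New_recurrence} and \eqref{Andrews_iterated_recurrence} survive, in analogy with the correction-free case \eqref{Summand_rec1}--\eqref{Sum_rec1} governing \eqref{Fin_Cap_1_1}. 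Once this bookkeeping is in place, the remaining work---bulk recurrence matching, recognizing $S_N$ via \eqref{S_poly}, and the base-case verifications---is routine.
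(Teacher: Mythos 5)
Your treatment of the first identity is correct and is exactly the argument the paper intends (the paper states Theorem~\ref{THM_C1_23} without proof, ``in the spirit of'' \eqref{Fin_Cap_1_1}): a qMultiSum-certified recurrence for the double sum, matched against \eqref{New_recurrence} with the initial conditions generated by \eqref{Init_cond_0mod3}, identifies the double sum with $G_{1,3N}(1,1,q)$; and the two single sums are precisely $S(1,1,q,N)+q^{3N-1}S(1,1,q,N-1)$ (the terms beyond $l=\lfloor N/2\rfloor$ vanishing through the $q^3$-binomials), which equals $G_{1,3N}(1,1,q)$ by Theorem~\ref{THM_0mod3} at $a=b=1$.

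Your plan for the second identity, however, cannot close, and the failure is concrete. The double sum printed there, $\sum_{m,n\geq0}q^{Q(m,n)}{3(N-2n-m)\brack m}_q{2(N-2n-m)+n\brack n}_{q^3}$, is character-for-character the left-hand side of \eqref{Fin_Cap_1_1}, and Section~\ref{Sec_MultiSums} proves that this sum equals $G_{1,3N-2}(1,1,q)$ --- not $G_{1,3N+2}(1,1,q)$. These genuinely differ: at $N=1$ the double sum is $1$, whereas $\sum_{\pi\in\mathcal{D}_{1,5}}q^{|\pi|}=1+q^2+q^3+q^4+q^5+q^6$. So the base-case comparison you defer to as ``routine'' fails at the very first step, and no bookkeeping of non-homogeneous correction terms (the obstacle you single out) can repair it; as printed, the second chain of Theorem~\ref{THM_C1_23} is false, with the culprit being misprinted binomial entries (for $\mathcal{D}_{1,3N+2}$ they must be shifted, to ${3(N-2n-m)+4\brack m}_q{2(N-2n-m)+n+2\brack n}_{q^3}$, which one can check does reproduce $G_{1,3N+2}(1,1,q)$) and a garbled final sum $q^{3N-1}\sum_l q^{3{N-1\choose 2l}}(-q^2,-q^4;q^6)_l$ that is missing its $q^3$-binomial factor altogether. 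Your own construction of the right-hand side should have been the red flag: \eqref{AAGrec3} together with \eqref{polyAAG} yields $\sum_l q^{3{N+1-2l\choose 2}}{N+1\brack 2l}_{q^3}(-q^2,-q^4;q^6)_l+q^{3N+2}\sum_l q^{3{N-2l\choose 2}}{N\brack 2l}_{q^3}(-q^2,-q^4;q^6)_l$, whose prefactor $q^{3N+2}$ and binomial ${N+1\brack 2l}_{q^3}$ cannot be ``collected'' into the printed $q^{3N-1}$ and ${N\brack 2l}_{q^3}$. A complete argument must first detect and repair the statement (as any executed run of your recurrence matching would have forced); only then does your --- otherwise appropriate --- machinery go through.
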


In the spirit of \eqref{Fin_Cap_2_1}, the following two generating function interpretations and the following analytic identities are true.
\begin{theorem}\label{THM_C2_23} For $Q(m,n):= 2m^2 + 6mn + 6n^2$
\begin{align}
\nonumber\sum_{\pi\in\mathcal{D}_{2,3N}} q^{|\pi|} &= \sum_{m,n\geq 0} q^{Q(m,n)} {3(N-2n-m+1) \brack m }_q {2(N-2n-m)+n\brack n}_{q^3}\\
&+\sum_{m,n\geq 0} q^{Q(m,n)+3m+6n+1} {3(N-2n-m)-1 \brack m }_q {2(N-2n-m)+n-1\brack n}_{q^3}\\
\nonumber&= \sum_{l=0}^N q^{3{N-2l \choose 2}} {N+1 \brack 2l+1}_{q^3}(-q,-q^5;q^6)_l 
+ q^{3N-2}\sum_{l=0}^N {q^{3{N-2l-1\choose 2}}}{N \brack 2l+1}_{q^3}(-q,-q^5;q^6)_l,\\[-1.5ex]\nonumber\\
\nonumber \sum_{\pi\in\mathcal{D}_{2,3N+2}} q^{|\pi|} & = 
\nonumber\sum_{m,n\geq 0} q^{Q(m,n)+m+3n} {3(N-2n-m+1) \brack m }_q {2(N-2n-m)+n-1\brack n}_{q^3}\\
&+\sum_{m,n\geq 0} q^{Q(m,n)+3m+6n+1} {3(N-2n-m)+1 \brack m }_q {2(N-2n-m)+n\brack n}_{q^3}\\
\nonumber&= \sum_{l=0}^N q^{3{N-2l \choose 2}} {N+1 \brack 2l+1}_{q^3}(-q;q^6)_{l+1}(-q^5;q^6)_l\\
\nonumber&+ q^{3N+2}\sum_{l=0}^N {q^{3{N-2l-1\choose 2}}}{N \brack 2l+1}_{q^3}(-q;q^6)_{l+1}(-q^5;q^6)_l.
\end{align}
\end{theorem}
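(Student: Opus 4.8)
The plan is to prove the two identities of Theorem~\ref{THM_C2_23} by the same mechanism that established Theorem~\ref{Intro_Main_THM}, exploiting the fact that the combinatorial left-hand sides are, by the very definition of the generating functions $G_{m,N}$, literally $G_{2,3N}(1,1,q)$ and $G_{2,3N+2}(1,1,q)$. Each asserted chain has three members, but the rightmost (closed-form) equality is essentially free. Indeed, the first single sum in the $\mathcal{D}_{2,3N}$ identity is exactly $T(1,1,q,N)$ and the second is $q^{3N-2}T(1,1,q,N-1)$ (using $(-q,-q^5;q^6)_l=(-q^5,-q;q^6)_l$, and that $\genfrac{[}{]}{0pt}{}{N+1}{2l+1}_{q^3}$ kills the terms with $l>N/2$), so by \eqref{G_0mod3_T} with $a=b=1$ their sum is $G_{2,3N}(1,1,q)$. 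Likewise, in the $\mathcal{D}_{2,3N+2}$ identity the two single sums are $U(1,1,q,N)=G_{2,3N+1}(1,1,q)$ and $q^{3N+2}U(1,1,q,N-1)=q^{3N+2}G_{2,3(N-1)+1}(1,1,q)$, whose sum is $G_{2,3N+2}(1,1,q)$ by \eqref{AAGrec3} taken at $N\mapsto N+1$ with $a=1$. Thus the closed-form equalities reduce to results already in hand, and the entire burden falls on the first (double-sum) equality.

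For the double sums I would proceed recursively, exactly as for \eqref{Fin_Cap_1_1}--\eqref{Fin_Cap_3_1}. First, feed each of the four double-sum summands to qMultiSum to obtain a homogeneous recurrence in $N$ (with shifts in $m,n$); then sum over $m,n\geq0$ and combine the two summands of each identity. The target is to show that the total for the $\mathcal{D}_{2,3N}$ identity satisfies \eqref{New_recurrence} with $a=b=1$, the recurrence already known for $G_{2,3N}(1,1,q)$, and that the total for the $\mathcal{D}_{2,3N+2}$ identity satisfies the recurrence inherited by $G_{2,3N+2}$ through \eqref{AAGrec3} and \eqref{Andrews_recurrence}. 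Finally, matching a bounded number of initial values (for an order-three recurrence, $N=0,1,2$ computed against $G_{2,0},G_{2,3},G_{2,6}$ via \eqref{Init_cond_0mod3}, and the analogous data for $G_{2,2},G_{2,5},G_{2,8}$) pins the sums down and closes the argument.

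The main obstacle, as in the proof of Theorem~\ref{Intro_Main_THM}, will be the non-homogeneous boundary corrections. Because the $q$-binomial recurrence \eqref{q_bin_rec} fails at the lower corner of the summation range, summing the qMultiSum-produced homogeneous recurrence over $m,n\geq0$ introduces Kronecker-delta and $\chi(N)$ terms, just as in \eqref{Summand_rec3}--\eqref{Summand_rec4}. Since $G_{2,3N}(1,1,q)$ and $G_{2,3N+2}(1,1,q)$ obey clean homogeneous recurrences, these corrections must cancel once the two summands of each identity are added together. Carrying out this cancellation is delicate here because the binomial arguments are shifted ($3(N-2n-m+1)$, $3(N-2n-m)\pm1$, and the $q^3$-binomial top entry $2(N-2n-m)+n-1$), so one must identify precisely which $(m,n)$ sit on the affected boundary for each shifted index and verify term by term that the surviving corrections annihilate. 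I expect this bookkeeping, rather than the recurrence-matching or the initial-condition check, to be the crux of the proof.
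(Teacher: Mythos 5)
Your proposal is correct and follows essentially the same route the paper intends: the closed forms reduce to the Section~\ref{Sec_recurrences} representations (the sum for $\mathcal{D}_{2,3N}$ is $T(1,1,q,N)+q^{3N-2}T(1,1,q,N-1)=G_{2,3N}(1,1,q)$ by \eqref{G_0mod3_T}, and the sum for $\mathcal{D}_{2,3N+2}$ is $U(1,1,q,N)+q^{3N+2}U(1,1,q,N-1)=G_{2,3N+2}(1,1,q)$ by \eqref{G2_U_poly} and \eqref{AAGrec3}), while the double sums are handled by qMultiSum recurrences, cancellation of the non-homogeneous boundary corrections, and a finite check of initial conditions, exactly as in the proof of Theorem~\ref{Intro_Main_THM}. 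Your identification of the boundary-correction bookkeeping as the crux matches the paper's own experience with \eqref{Summand_rec3}--\eqref{Summand_rec4}, with the minor caveat that the cancellation may force you (as it did there) to work with an iterated, higher-order homogeneous recurrence and correspondingly one more initial value than the order-three count you give.
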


In the spirit of \eqref{Fin_Cap_3_1}, the following two generating function interpretations and the following analytic identities are true.
\begin{theorem}\label{THM_C3_23} For $Q(m,n):= 2m^2 + 6mn + 6n^2$
\begin{align}
\nonumber \sum_{\pi\in\mathcal{D}_{2,3N}} q^{|\pi|} & = 
\sum_{m,n\geq 0} q^{Q(m,n)+m} {3(N-2n-m)+1 \brack m }_q {2(N-2n-m)+n+1\brack n}_{q^3}\\
\nonumber &+\sum_{m,n\geq 0} q^{Q(m,n)+4m+6n+1} {3(N-2n-m)-2 \brack m }_q {2(N-2n-m)+n-1\brack n}_{q^3}\\
\nonumber &= \sum_{l=0}^N q^{3{N-2l \choose 2}} {N+1 \brack 2l+1}_{q^3}(-q,-q^5;q^6)_l 
+ q^{3N-2}\sum_{l=0}^N {q^{3{N-2l-1\choose 2}}}{N \brack 2l+1}_{q^3}(-q,-q^5;q^6)_l,\\[-1.5ex]\nonumber\\
\nonumber\sum_{\pi\in\mathcal{D}_{2,3N+2}} q^{|\pi|} &= \sum_{m,n\geq 0} q^{Q(m,n)} {3(N-2n-m+1) \brack m }_q {2(N-2n-m)+n\brack n}_{q^3}\\
&+\sum_{m,n\geq 0} q^{Q(m,n)+4m+6n+1} {3(N-2n-m) \brack m }_q {2(N-2n-m)+n\brack n}_{q^3}\\
\nonumber&= \sum_{l=0}^N q^{3{N-2l \choose 2}} {N+1 \brack 2l+1}_{q^3}(-q;q^6)_{l+1}(-q^5;q^6)_l\\
\nonumber&+ q^{3N+2}\sum_{l=0}^N {q^{3{N-2l-1\choose 2}}}{N \brack 2l+1}_{q^3}(-q;q^6)_{l+1}(-q^5;q^6)_l.
\end{align}
\end{theorem}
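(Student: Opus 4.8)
The plan is to prove both asserted identities by the recursive template already used for Theorem~\ref{Intro_Main_THM}, treating the single-sum side and the double-sum side separately and joining them through the polynomial representations established in Section~\ref{Sec_recurrences}. I begin by observing that the two right-most (single) sums require no new work. Since $\sum_{\pi\in\mathcal{D}_{2,3N}}q^{|\pi|}=G_{2,3N}(1,1,q)$, specializing \eqref{G_0mod3_T} to $a=b=1$ writes this as $T(1,1,q,N)+q^{3N-2}T(1,1,q,N-1)$, which is precisely the single sum on the right of the first identity (using $(-q^5,-q;q^6)_l=(-q,-q^5;q^6)_l$ and ${N-2l-1\choose 2}={N-1-2l\choose 2}$). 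For the second identity, $\sum_{\pi\in\mathcal{D}_{2,3N+2}}q^{|\pi|}=G_{2,3(N+1)-1}(1,1,q)$; applying \eqref{AAGrec3} with $m=2$ and $N\mapsto N+1$ gives $G_{2,3N+2}=G_{2,3N+1}+q^{3N+2}G_{2,3(N-1)+1}$, and substituting $G_{2,3K+1}(1,1,q)=U(1,1,q,K)$ from \eqref{G2_U_poly} reproduces the single sum on the right verbatim. In particular, these single-sum sides coincide with those of Theorem~\ref{THM_C2_23}, so the two theorems furnish genuinely different double-sum representations of the same two generating functions.

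All the new content therefore lies in showing that the two pairs of double sums also equal $G_{2,3N}(1,1,q)$ and $G_{2,3N+2}(1,1,q)$. Following the proof of Theorem~\ref{Intro_Main_THM}, I would feed each of the four double-sum summands to qMultiSum to produce a fourth-order $N$-recurrence of the shape of \eqref{Summand_rec1}, with the $m,n$-shifts forced by the shifted $q$-binomial arguments, and then sum over $m,n\geq 0$. The target recurrences are known: $G_{2,3N}$ obeys \eqref{New_recurrence} with $a=b=1$, while the recurrence for $G_{2,3N+2}=G_{2,3N+1}+q^{3N+2}G_{2,3(N-1)+1}$ is obtained by combining \eqref{Andrews_recurrence} with a shifted copy, exactly as the sequence $Y_N=T_N+bq^{3N-2}T_{N-1}$ was treated in the proof of Theorem~\ref{THM_0mod3}. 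It then remains to check that the summed double-sum sides satisfy these recurrences and to match the few initial values $N=0,1,2,3$ against $G_{2,3N}(1,1,q)$ and $G_{2,3N+2}(1,1,q)$, read off from \eqref{Init_cond_0mod3} and a direct enumeration of $\mathcal{D}_2$; uniqueness of the recurrence-plus-initial-data then closes the argument.

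The main obstacle is the non-homogeneity already encountered in the proof of Theorem~\ref{Intro_Main_THM}: summing each summand recurrence over the quadrant $m,n\geq 0$ triggers the failure of the $q$-binomial recurrence \eqref{q_bin_rec} along the boundary $m=0$ or $n=0$, injecting Kronecker-delta correction terms of the kind seen in \eqref{Summand_rec3}--\eqref{Summand_rec4}. Here I expect these corrections to be more intricate, because the shifted arguments such as $3(N-2n-m)-2$ and $2(N-2n-m)+n-1$ relocate the boundary and can make several binomials in one summand degenerate at once. Moreover, unlike \eqref{Fin_Cap_3_1}, the statement of Theorem~\ref{THM_C3_23} carries no explicit $\chi(N)$-term, so the boundary contributions of the two double sums in each identity must either vanish termwise or cancel against one another rather than survive. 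The crux of the proof is thus to isolate every boundary term produced by the degenerate binomials and verify that the two summands' corrections annihilate, leaving a homogeneous recurrence identical to that of the target $G$-function; once that cancellation is in hand, the initial-condition check is routine.
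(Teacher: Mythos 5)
Your proposal is correct and follows essentially the same route the paper (implicitly) takes for this theorem: the single-sum sides come from the Section~\ref{Sec_recurrences} representations (namely \eqref{G_0mod3_T} at $a=b=1$, and \eqref{AAGrec3} combined with \eqref{G2_U_poly}), while the double sums are identified with $G_{2,3N}(1,1,q)$ and $G_{2,3N+2}(1,1,q)$ via qMultiSum-certified recurrences matched against \eqref{New_recurrence}/\eqref{Andrews_recurrence} and initial conditions, exactly as in the proof of Theorem~\ref{Intro_Main_THM}. Your reading of the key technical point is also right: since the statement carries no $\chi(N)$ term, the non-homogeneous boundary contributions arising from the failure of \eqref{q_bin_rec} must vanish or cancel between the two double sums, which is precisely how the ``in the spirit of \eqref{Fin_Cap_3_1}'' argument closes.
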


\section{Direct combinatorial interpretations of the double sums}\label{Sec_Motions}

We follow Kur\c{s}ung\"oz's ideas \cite{Kagan,Kagan2} and start with the partition (written in ascending order)
\begin{equation}\label{min_config}\pi=(\underbrace{2,4},\underbrace{8,10},\dots, \underbrace{3(2n-1)-1,3(2n-1)+1},3(2n-1)+1+4,3(2n-1)+1+8,\dots,3(2n-1)+1+4m)\end{equation}
from $\mathcal{D}_{1,3N+1}$, for some $3N \geq 6n+4m-3$. We note that this partition has the size $Q(m,n)$ defined in \eqref{Qnm}. Observe that $\pi$ is the partition that satisfies the Capparelli difference conditions with smallest part exactly 2, which has $n$ pairs of consecutive parts (\textit{pairs}, indicated with underbraces) with gap between these parts being exactly 2, followed by $m$ parts (\textit{singletons}) with gaps between these parts exactly 4. We call such partition a \textit{minimal configuration}. 

Given any Capparelli partition, we can always identify pairs and singletons. From the smallest part to the largest we pair up consecutive parts of the partition with gap $\leq 3$ as pairs, where a part is exclusive to a single pair, and the rest of the parts are singletons. We illustrate this with an example: 
\[\lambda = (4,\underbrace{9,12},15,20,\underbrace{24,27}).\] The partition $\lambda$ has two pairs and three singletons.

We now describe rules of \textit{motion} for the parts of $\pi$ in the style of Kur\c{s}ung\"oz \cite{Kagan, Kagan2}, which would convert $\pi$ into another Capparelli partition with the smallest part $\geq 2$ and with $n$ pairs and $m$ singletons, just as in \eqref{min_config}. These rules of motion are bijective and, in principal, can be used in reverse to transform any Capparelli partition into a unique minimal configuration.
\begin{enumerate}[i.]
\item {The motion of the largest $m$ parts that are 4 distant each:}\vspace{.1cm}

We can convert $\pi$ into \[(\underbrace{2,4},\dots,\underbrace{3(2n-1)-1,3(2n-1)+1},3(2n-1)+1+4+x_m,\dots,3(2n-1)+1+4m+x_1),\] where \[3N-6n-4m+3\geq x_1\geq x_{2}\geq \dots x_{m-1}\geq x_m\geq 0.\]
Notice that $x= (x_1,x_2,\dots,x_m)$ itself is a partition into at most $m$ parts where each part is $\leq 3N-6n-4m+3$. Also notice that this addition is bijective and can be reversed. The generating function of all such partitions $x$ is \begin{equation}\label{singleton_movement_GF}\sum_{3N-6n-4m+3\geq x_1\geq \dots\geq x_m\geq 0}q^{x_1+x_2+\dots+x_m}={m+3N-6n-4m+3\brack m}_q.\end{equation}

\item {The motion of the largest $n$ pairs that are 2 distant each:}\vspace{.1cm}

We can order the pairs $\underbrace{3k-1,3k+1}$ by their larger value $3k+1$. We move the pairs starting in order from the largest pair to the smallest pair: $\underbrace{2,4}$. Just as in the $m$ single 4-distant parts case, the amount the largest pair moves is greater or equal than the amount the second largest pair can move, which is greater or equal to the amount the third largest pair can move etc. Therefore, we do not consider or see a situation where the order of the pairs change. On the other hand, pairs might cross over singletons. Therefore, for a pair $\underbrace{x,y}$, singletons $\geq y$ play an important role. These singletons should be taken in to account when we consider the motion of such a pair.  

For some positive $k$, the pair $\underbrace{3k-1,3k+1}$ has its \textit{center} (arithmetic mean) at $3k$. If the gap between the part $3k+1$ to the next larger part (if it exists) is $\geq 5$ then we move the pair as:
\begin{equation}\label{move1}\underbrace{3k-1,3k+1}\mapsto \underbrace{3k,3k+3}.\end{equation} This motion also adds 3 to this pair's total size, and moves the center by 3/2. If the pair $\underbrace{3k,3k+3}$ is to move and if the gap between the part $3k+3$ to the next larger part (if it exists) is $\geq 5$ 
\begin{equation}\label{move2}\underbrace{3k,3k+3}\mapsto \underbrace{3(k+1)-1,3(k+1)+1}.\end{equation}
This motion once again adds 3 to this pairs total size, and moves the center by 3/2. 

It is possible that a pair can cross over a single part. There are only three possible situations of the sort. We write these situations and their outcomes here:
\begin{align}
\label{jump1}\underbrace{3k-1,3k+1},\ 3(k+2)-1 &\mapsto 3k-1,\ \underbrace{3(k+1), 3(k+2)},\\
\label{jump2}\underbrace{3k, 3(k+1)},\ 3(k+2)&\mapsto 3k,\ \underbrace{3(k+2)-1,3(k+2)+1},\\
\label{jump3}\underbrace{3k, 3(k+1)},\ 3(k+2)+1&\mapsto 3k+1,\ \underbrace{3(k+2)-1,3(k+2)+1}.
\end{align}
These motions are bijective and can be reversed. In all three of these cases the center of the pair moves $3+3/2$, even though the total size increases by 3, as in the free motion of these pairs. 

Now we can express the motion of the pairs and the generating function of partitions for such motion. The largest pair $\underbrace{3(2n-1)-1,3(2n-1)+1}$ can move up to $\underbrace{3N-1,3N+1}$, equivalently the center of the largest pair $6n-3$ can move up to $3N$. To move to the $3N$, the pair needs to cross over $m$ single parts of the partition and each move shifts the center with an extra $3$ steps. Each move is 3/2 steps forward. Hence, the largest pair needs to move \[(3N-(6n-3)-3m)\times 2/3 = 2(N-2n-m+1) \] times to reach $3N$.

There are $n$ pairs in the minimal configuration $\pi$, and all of these pairs can move at most $2(N-2n-m+1)$-many times, without crossing each other. Therefore these movements can be expressed as a partition into at most $n$ parts where each part is $\leq 2(N-2n-m+1)$. Moreover, each motion increases the total size by 3. Hence the generating function of such motions is \begin{equation}\label{pair_movement_GF}{2(N-2n-m+1)+n\brack n}_{q^3}.\end{equation}
\end{enumerate}

We illustrate the rules of motion in reverse by starting from the Capparelli partition (written in ascending order) \[\lambda = (4,\underbrace{9,12},15,20,\underbrace{24,27})\] and go to its unique minimal configuration partition. In our motions, the pairs move after the singletons. Among these pairs the smallest pair moves last. We move the smallest pair $\underbrace{9,12}$ by the reverse of the motions \eqref{move1}, \eqref{jump3}, and \eqref{move1} to get  \[\lambda' = (\underbrace{2,4},10,15,20,\underbrace{24,27}).\] The next pair in the partition $\lambda'$ is $\underbrace{24,27}$ and we can go backwards by the inverses of the moves \eqref{jump1}, \eqref{jump2}, \eqref{move1}, \eqref{jump3}, and once again \eqref{move1}. This gives us the partition \[\lambda'' = (\underbrace{2,4},\underbrace{8,10},16,21,26).\] Finally, we move these single standing parts to the smallest position they can go (which has gaps of 4 on both sides) and get the minimal configuration \[(\underbrace{2,4},\underbrace{8,10},14,18,22).\]

Putting the above arguments together, we get the following result.
\begin{theorem} The expression
\[q^{2n^2+6mn+6m^2}{3(N-2n-m+1)\brack m}_q{2(N-2n-m+1)+n\brack n}_{q^3}\] is the generating function of Capparelli partitions from $D_{1,3N+1}$ with $n$ pairs and $m$ singletons.
\end{theorem}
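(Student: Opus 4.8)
The plan is to realize the claimed product as the generating function of a bijection that builds every Capparelli partition in $\mathcal{D}_{1,3N+1}$ with exactly $n$ pairs and $m$ singletons out of the single minimal configuration $\pi$ of \eqref{min_config} by applying the two families of motions described above. First I would record that $\pi$ has size $Q(m,n)$, which accounts for the prefactor $q^{Q(m,n)}$; since every partition obtained from $\pi$ by the motions has size $Q(m,n)$ plus the total displacement accumulated during the motions, it suffices to show that the singleton displacements and the pair displacements range, bijectively and independently, over the two lattice sets whose generating functions are precisely the two displayed $q$-binomial coefficients.

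Next I would handle the singleton motion. The top $m$ parts begin $4$ apart and are translated upward by weakly decreasing nonnegative amounts $x_1\geq x_2\geq\cdots\geq x_m\geq 0$ bounded by $3N-6n-4m+3$ (so that the largest part stays $\leq 3N+1$), so the displacement vector $x$ is exactly a partition into at most $m$ parts, each at most $3N-6n-4m+3$. Each unit of displacement adds $1$ to the size, so by the standard enumeration \eqref{singleton_movement_GF} the contribution of this step is $\genfrac{[}{]}{0pt}{}{3(N-2n-m+1)}{m}_q$, the first factor; this step is manifestly reversible.

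I would then treat the pair motion, which is the more delicate half. Using the elementary moves \eqref{move1}--\eqref{move2} and the crossing moves \eqref{jump1}--\eqref{jump3}, each of the $n$ pairs slides upward in increments that add $3$ to the size, and I would verify that every such move advances the pair's center by $3/2$, with a crossing over a singleton costing an extra $3$ in center displacement but no extra size. Counting as in the text, the largest pair can advance at most $2(N-2n-m+1)$ times before its center reaches $3N$; and because the pairs are moved in order from largest to smallest, their displacements come out weakly decreasing and never force a reordering. Hence the pair displacements form a partition into at most $n$ parts, each at most $2(N-2n-m+1)$, with each part weighted by $q^3$, which via \eqref{pair_movement_GF} gives the second factor $\genfrac{[}{]}{0pt}{}{2(N-2n-m+1)+n}{n}_{q^3}$.

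The main obstacle is to prove that these two motions are genuinely independent and jointly bijective: that performing the singleton motion and then the pair motion (in the prescribed largest-to-smallest order, with the crossing rules) produces every Capparelli partition in $\mathcal{D}_{1,3N+1}$ with $n$ pairs and $m$ singletons exactly once. I would establish this by exhibiting the inverse as a deterministic algorithm --- the reverse motions illustrated on $\lambda$ --- that returns any such partition to its unique minimal configuration, checking at each step that the pair/singleton classification, the Capparelli difference conditions, and the bound $\leq 3N+1$ are all preserved, and that the recorded displacements land in the claimed ranges. Granting this bijection, multiplying the three contributions $q^{Q(m,n)}$, $\genfrac{[}{]}{0pt}{}{3(N-2n-m+1)}{m}_q$, and $\genfrac{[}{]}{0pt}{}{2(N-2n-m+1)+n}{n}_{q^3}$ yields exactly the stated generating function.
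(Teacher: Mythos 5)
Your proposal is correct and is essentially the paper's own proof: the paper establishes this theorem by exactly the Kur\c{s}ung\"oz-style argument you outline --- the minimal configuration \eqref{min_config} of size $Q(m,n)$, singleton displacements enumerated by \eqref{singleton_movement_GF}, pair motions \eqref{move1}--\eqref{jump3} (each adding $3$ to the size, with crossings advancing the center by an extra $3$ at no extra cost in size) enumerated by \eqref{pair_movement_GF}, and reversibility of the moves supplying the bijection. One remark: the exponent printed in the statement, $2n^2+6mn+6m^2$, is a typo for $Q(m,n)=2m^2+6mn+6n^2$ (the actual size of the minimal configuration with $n$ pairs and $m$ singletons), which is what both your argument and the paper's produce.
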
 Summing over all $m$ and $n\geq 0 $ we get the combinatorial interpretation of the left-hand side of \eqref{Fin_Cap_1_1}. \begin{corollary} \label{Corollary_G1}
\begin{equation}\label{Corollary_G1_EQN}
\sum_{m,n\geq 0} q^{2n^2+6mn+6m^2}{3(N-2n-m+1)\brack m}_q{2(N-2n-m+1)+n\brack n}_{q^3} = G_{1,3N+1}(1,1,q),\end{equation}
where $G_{1,N}$ is as defined in Section~\ref{Sec_recurrences}.
\end{corollary}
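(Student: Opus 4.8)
The plan is to obtain this Corollary directly from the preceding Theorem by a disjoint-union argument, with no new recurrence required. First I would unwind the definitions: setting $a=b=1$ in the generating function of Section~\ref{Sec_recurrences} gives $G_{1,3N+1}(1,1,q)=\sum_{\pi\in\mathcal{D}_{1,3N+1}}q^{|\pi|}$, so \eqref{Corollary_G1_EQN} reduces to the purely combinatorial statement that summing the left-hand summands over all $m,n\ge 0$ reproduces the full generating function of $\mathcal{D}_{1,3N+1}$.

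Next I would verify that the canonical pairing introduced at the start of Section~\ref{Sec_Motions} — scanning a Capparelli partition from its smallest part upward and greedily grouping each admissible gap-$\le 3$ adjacency into a pair, with every part used in at most one pair — assigns to each $\pi\in\mathcal{D}_{1,3N+1}$ a \emph{unique} number of pairs $n(\pi)$ and singletons $m(\pi)$. The difference conditions of $\mathcal{D}_{1,3N+1}$ allow a gap $\le 3$ only between $3k$ and $3k+3$ or between $3k-1$ and $3k+1$, which makes the greedy rule unambiguous; consequently the sets $\mathcal{D}_{1,3N+1}^{(m,n)}:=\{\pi\in\mathcal{D}_{1,3N+1}:m(\pi)=m,\ n(\pi)=n\}$ are pairwise disjoint and exhaust $\mathcal{D}_{1,3N+1}$.

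The preceding Theorem then states precisely that the $(m,n)$ summand equals $\sum_{\pi\in\mathcal{D}_{1,3N+1}^{(m,n)}}q^{|\pi|}$. Summing over all $m,n\ge 0$ and using the disjoint decomposition $\mathcal{D}_{1,3N+1}=\bigsqcup_{m,n\ge 0}\mathcal{D}_{1,3N+1}^{(m,n)}$ yields the identity. Since all parts are bounded by $3N+1$, only finitely many classes are nonempty for fixed $N$, so the rearrangement of the sum is legitimate.

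The main obstacle is not the summation itself but the well-definedness and exhaustiveness underpinning it, which is really the content carried by the reversible rules of motion \eqref{move1}--\eqref{jump3}. For the Corollary I must be confident that running these motions in reverse always terminates at one and the same minimal configuration \eqref{min_config}, and that the three jump cases \eqref{jump1}--\eqref{jump3} truly cover every way a pair can cross a singleton; this is what guarantees that the classification by $(m,n)$ is a genuine partition of $\mathcal{D}_{1,3N+1}$ with no partition double-counted or omitted. Once that bookkeeping is in hand, the Corollary follows immediately by summation.
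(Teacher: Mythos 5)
Your proposal is correct and follows the same route as the paper: the paper obtains the Corollary from the preceding theorem in a single step, summing the $(m,n)$-generating functions over all $m,n\geq 0$, with the pair/singleton classification of Section~\ref{Sec_Motions} providing the disjoint decomposition of $\mathcal{D}_{1,3N+1}$ and the definition $G_{1,3N+1}(1,1,q)=\sum_{\pi\in\mathcal{D}_{1,3N+1}}q^{|\pi|}$ closing the argument. Your additional remarks on well-definedness of the greedy pairing and finiteness of the sum simply make explicit what the paper leaves implicit.
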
 

Similarly, we can start with the minimal configurations \begin{equation}\label{pi1}\pi_1^* = (\underbrace{3,6},\underbrace{9,12},\dots,\underbrace{3(2n-1),3(2n-1)+3}, 3(2n-1)+3+4,\dots, 3(2n-1)+3+4m),\end{equation} and \begin{equation}\label{pi2}\pi_2^* = (1,\underbrace{5,7},\underbrace{11,13},\dots,\underbrace{6n-1,6n+1}, 6n+1+4,\dots, 6N+1+4m),\end{equation} and prove the following theorem with the use of Kur\c{s}ung\"oz style motions \eqref{move1}, \eqref{move2}, \eqref{jump1}-\eqref{jump3}, and the generating function relations \eqref{singleton_movement_GF} and \eqref{pair_movement_GF}.
\begin{theorem} The expression
\[ q^{Q(m,n)+m+3n}{3(N-2n-m)+2\brack m}_q {2(N-2n-m)+n+1\brack n}_{q^3}\] is the generating function of Capparelli partitions from $D_{2,3N+1}$ with $n$ pairs and $m$ singletons, where all parts are $\geq 3$. Also, \[q^{Q(m,n)+3m+6n+1}{3(N-2n-m)\brack m}_q {2(N-2n-m)+n\brack n}_{q^3}\] is the generating function of Capparelli partitions from $D_{2,3N+1}$ with $n$ pairs and $m+1$ singletons, where $1$ is always a part of the counted partitions. In this case, 1 is the singleton that does not move.
\end{theorem}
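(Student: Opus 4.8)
The plan is to mirror, for $\mathcal{D}_{2,3N+1}$, the argument just carried out for $\mathcal{D}_{1,3N+1}$: to each of the two product expressions I would attach an explicit \emph{minimal configuration}, read its size off as the exponent of the leading power of $q$, and then check that the two $q$-binomial coefficients enumerate exactly the admissible singleton motions \eqref{singleton_movement_GF} and pair motions \eqref{pair_movement_GF} applied to that configuration. The organizing observation is that, since no part may equal $2$, every partition in $\mathcal{D}_{2,3N+1}$ either has smallest part $\geq 3$ or contains the part $1$; the first expression will account for the former family and the second for the latter, so the two interpretations are complementary and together exhaust $\mathcal{D}_{2,3N+1}$.

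For the first assertion I would begin with $\pi_1^*$ of \eqref{pi1}. First I would confirm that $\pi_1^*\in\mathcal{D}_{2,3N+1}$ and that it is genuinely minimal: its $n$ pairs are the consecutive multiples of $3$ running from $3$ to $6n$ (each a gap-$3$ pair), its $m$ singletons occupy the lowest admissible positions above $6n$, every part is $\geq 3$, and no smaller partition with these data exists. A size computation — the pairs contributing $6n^2+3n$ and the singletons $2m^2+6mn+m$ — then gives the exponent $Q(m,n)+m+3n$. Finally, running the motions \eqref{move1}, \eqref{move2}, \eqref{jump1}--\eqref{jump3}, I would track the largest singleton (which can be raised until it meets the maximal admissible part) and the largest pair (whose centre can be pushed up to $3N$, crossing the $m$ singletons by \eqref{jump1}--\eqref{jump3}); the resulting ranges give precisely ${3(N-2n-m)+2\brack m}_q$ and ${2(N-2n-m)+n+1\brack n}_{q^3}$.

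For the second assertion I would begin with $\pi_2^*$ of \eqref{pi2}. The new feature is the part $1$: because $2$ is forbidden, a part equal to $1$ must be followed by a part $\geq 5$, so $1$ sits isolated at the bottom as a singleton that can neither be lowered nor absorbed into a pair. It is therefore the immovable singleton, and the remaining structure is $n$ gap-$2$ pairs $\{5,7\},\{11,13\},\dots,\{6n-1,6n+1\}$ (each summing to a multiple of $6$) together with $m$ movable singletons, for $m+1$ singletons in all. The size of $\pi_2^*$ — namely $1$, plus $6n^2+6n$ from the pairs, plus $2m^2+6mn+3m$ from the singletons — is $Q(m,n)+3m+6n+1$, and the same motion analysis, now applied only to the $n$ pairs and the $m$ movable singletons with $1$ held fixed, returns ${3(N-2n-m)\brack m}_q$ and ${2(N-2n-m)+n\brack n}_{q^3}$.

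The main difficulty lies not in the size bookkeeping but in the bijectivity underpinning the motions in this $\mathcal{D}_2$ setting. I would have to verify that the rules \eqref{move1}, \eqref{move2}, \eqref{jump1}--\eqref{jump3} remain well defined and reversible for the gap-$2$, sum-divisible-by-$6$ pairs that appear in $\pi_2^*$ (not merely the gap-$3$ pairs of $\pi_1^*$), that the immovable $1$ never obstructs a descending pair or singleton, and — most delicately — that the extremal reaches of the largest singleton and the largest pair are attained exactly, since it is precisely these reaches that fix the shifts $+2$, $+1$, and $0$ in the $q$-binomial arguments. Showing that every Capparelli partition in each family collapses to a \emph{unique} minimal configuration, so that the motions set up a genuine bijection, is where the real care is needed; the arithmetic of the exponents is then routine.
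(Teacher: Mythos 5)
Your proposal is correct and is essentially the paper's own proof: the paper disposes of this theorem in a single sentence, pointing to the minimal configurations \eqref{pi1}--\eqref{pi2}, the motions \eqref{move1}, \eqref{move2}, \eqref{jump1}--\eqref{jump3}, and the generating-function relations \eqref{singleton_movement_GF} and \eqref{pair_movement_GF}, which is precisely the plan you spell out (including the complementary split of $\mathcal{D}_{2,3N+1}$ into partitions with smallest part $\geq 3$ versus partitions containing $1$, which the paper only makes explicit later for the third identity). One remark worth recording: your size computation silently corrects a typo in the paper, since the lowest admissible singleton positions above the pair $\{6n-3,6n\}$ are $6n+3,6n+7,\dots,6n+4m-1$ (the part $6n+3$ is allowed at distance $3$ as a consecutive multiple of $3$), giving the stated exponent $Q(m,n)+m+3n$, whereas \eqref{pi1} as printed places the singletons at $6n+4,\dots,6n+4m$, which would give $Q(m,n)+2m+3n$ and contradict the theorem.
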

Hence, this yields the combinatorial explanation of the left-hand side of \eqref{Fin_Cap_2_1} directly.
\begin{corollary} \label{Corollary_G2}
\begin{align*}\sum_{m,n\geq 0}& q^{Q(m,n)+m+3n}{3(N-2n-m)+2\brack m}_q {2(N-2n-m)+n+1\brack n}_{q^3}\\&+\sum_{m,n\geq 0} q^{Q(m,n)+3m+6n+1}{3(N-2n-m)\brack m}_q {2(N-2n-m)+n\brack n}_{q^3} = G_{2,3N+1}(1,1,q),\end{align*}where $G_{2,N}$ is as defined in Section~\ref{Sec_recurrences}.
\end{corollary}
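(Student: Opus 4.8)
The plan is to read Corollary~\ref{Corollary_G2} as a disjoint-union statement over the set $\mathcal{D}_{2,3N+1}$, so that the two double sums simply enumerate two complementary families of Capparelli partitions. First I would recall that by the definition of $G_{2,N}(a,b,q)$ in Section~\ref{Sec_recurrences}, specializing $a=b=1$ collapses the refinement variables and gives $G_{2,3N+1}(1,1,q)=\sum_{\pi\in\mathcal{D}_{2,3N+1}}q^{|\pi|}$, the full generating function for Capparelli partitions of the second type with largest part at most $3N+1$. Thus it suffices to show that the sum of the two double sums is exactly this total generating function.

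Next I would split $\mathcal{D}_{2,3N+1}$ into two mutually exclusive and exhaustive classes according to the smallest part. Since parts equal to $2$ are forbidden in $D_2$, and since the Capparelli difference conditions force the part following $1$ to be at least $5$ (the gap $1,3$ is not one of the permitted exceptions $3k,3k+3$ or $3k-1,3k+1$), every $\pi\in\mathcal{D}_{2,3N+1}$ either has all parts $\geq 3$ or has $1$ as its unique smallest part. The empty partition falls vacuously into the first class. These two cases are disjoint and together cover $\mathcal{D}_{2,3N+1}$.

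The third step is to identify the generating function of each class with one of the two double sums, invoking the preceding Theorem. For the first class, the minimal configuration $\pi_1^*$ of \eqref{pi1} has all parts $\geq 3$, and the rules of motion \eqref{move1}--\eqref{jump3} together with \eqref{singleton_movement_GF} and \eqref{pair_movement_GF} show that $q^{Q(m,n)+m+3n}{3(N-2n-m)+2\brack m}_q {2(N-2n-m)+n+1\brack n}_{q^3}$ enumerates exactly those $\pi$ with all parts $\geq 3$ having $n$ pairs and $m$ singletons. For the second class, the configuration $\pi_2^*$ of \eqref{pi2} keeps $1$ as a fixed, non-moving singleton and applies the same motions to the remaining $n$ pairs and $m$ singletons, so that $q^{Q(m,n)+3m+6n+1}{3(N-2n-m)\brack m}_q {2(N-2n-m)+n\brack n}_{q^3}$ enumerates the partitions containing $1$ with $n$ pairs and $m+1$ singletons. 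Summing each enumerator over all $m,n\geq 0$ and adding, the two sums account for the two classes of $\mathcal{D}_{2,3N+1}$ exactly once each, which is precisely the asserted equality with $G_{2,3N+1}(1,1,q)$.

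The main obstacle is not the final summation but the bookkeeping already delegated to the preceding Theorem: one must confirm that the pair/singleton decomposition is a genuine bijection, namely that every Capparelli partition in each class arises from exactly one minimal configuration under the reverse motions, that the numbers of pairs and singletons are invariants of the motions, and, most delicately, that the exponent offsets relative to $Q(m,n)$ (the term $m+3n$ in the first case, and $3m+6n+1$ in the second) correctly record the size change of the base configurations $\pi_1^*$ and $\pi_2^*$, including the fixed contribution of the immovable part $1$. Once those bijections are in hand, Corollary~\ref{Corollary_G2} follows immediately by summing over $m$ and $n$ and using the dichotomy above.
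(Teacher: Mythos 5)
Your proposal is correct and takes essentially the same route as the paper: the paper also splits $\mathcal{D}_{2,3N+1}$ according to whether all parts are $\geq 3$ or $1$ occurs as a (non-moving) part, identifies each double sum with one of these classes via the minimal configurations $\pi_1^*$ and $\pi_2^*$ together with the Kur\c{s}ung\"oz-style motions \eqref{move1}--\eqref{jump3} and the generating function relations \eqref{singleton_movement_GF}, \eqref{pair_movement_GF}, and then sums over $m,n\geq 0$. The bookkeeping you flag (bijectivity of the motions, invariance of the pair/singleton counts, and the exponent offsets $m+3n$ and $3m+6n+1$ recording the sizes of the base configurations) is exactly what the theorem preceding the corollary in the paper supplies.
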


The interpretation of the left-hand side of \eqref{Fin_Cap_3_1} requires us to define new minimal configurations. Let \begin{equation}\label{pi1_hat}\hat{\pi}_1 := (\underline{1,5,7,11,\dots, 3(2n-1)-2,3(2n-1)+2}, 3(2n-1)+2+4, \dots, 3(2n-1)+2+4m),\end{equation} with size $Q(m,n)+m$. We call the sequence \[\underline{1,5,7,11,\dots, 6n-5,6n-1}\] the $n$-length \textit{initial chain} of $\hat{\pi}_1$. The initial chain of $\hat{\pi}_1$ consists of the first $n$ positive $\pm 1$ modulo 6 parts, and the next closest part to the initial chain is at least $4$ distant. We call $\hat{\pi}$ the minimal configuration with $n$-length initial chain and $m$ singletons.

The movement related to the initial chain is defined as the following \begin{equation}\label{long_dist_move}\underline{1,5,7,11,\dots, 6n-5,6n-1}\mapsto\underline{1,5,7,11,\dots, 6(n-1)-5,6(n-1)-1}, \underbrace{6n-3,6n},\end{equation} for any $n\in\Z_{>0}.$ This motion is bijective, creates an ordinary pair while shortening the initial chain by one. Moreover, just like \eqref{move1} and \eqref{move2}, the midpoint of $6n-5,6n-1$ moves $3/2$ units forward, and also adds $3$ to the partition overall size. After the initial movement \eqref{long_dist_move}, the created pairs move with the usual moves \eqref{move1} and \eqref{move2}. Furthermore, the end of the initial chain $\underline{1,\dots,6n-5,6n-1}$ is $4$ distant from the closest singleton $3(2n-1)+2+4$ (if it is a part of the partition) and therefore, it can move once with \eqref{long_dist_move} freely before being required to cross over the singleton. Therefore, we do not need to define (or modify) any new crossing over rules and can directly use \eqref{jump1}-\eqref{jump3}. 

Similar to \eqref{singleton_movement_GF}, one can easily show that the motions of the singletons of $\hat{\pi}$ as well as the initial chain and the later created pairs are related with the $q$-binomial coefficient. This proves the following:
% and the motion of the initial chain defined by \eqref{long_dist_move}, and later \eqref{move1}, \eqref{move2}, 
%\eqref{jump1}-\eqref{jump3} is related with the $q$-binomial \[{3(N-2n-m)+2\brack m}_q, \text{  and  } {2(N-2n-m)+n+2\brack n}_{q^3},\] %respectvely, by %analogous reasoning used in \eqref{pair_movement_GF}. Then, we have the following.  
\begin{theorem}\label{KR3_THM1} The expression
\begin{equation}\label{KR2_Summand_1}q^{Q(m,n)+m}{3(N-2n-m)+2\brack m}_q {2(N-2n-m)+n+2\brack n}_{q^3}\end{equation} is the generating function for the number of partitions from $D_{2,3N+1},$ where there are $m$ singletons and the length of the initial chain plus the number of pairs is $n$.
\end{theorem}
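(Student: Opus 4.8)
The plan is to prove Theorem~\ref{KR3_THM1} by a Kur\c{s}ung\"oz-style bijection, exactly parallel to the arguments already used for \eqref{Fin_Cap_1_1} and \eqref{Fin_Cap_2_1}, but now with the initial chain playing the role of a stack of $n$ latent pairs. First I would record that the minimal configuration $\hat{\pi}_1$ of \eqref{pi1_hat} indeed has size $Q(m,n)+m$: its length-$n$ initial chain $1,5,7,\dots,6n-5,6n-1$ contributes $\sum_{k=1}^{n}\big((6k-5)+(6k-1)\big)=6n^2$, while its $m$ singletons $6n-1+4,\dots,6n-1+4m$ contribute $2m^2+6mn+m$, summing to $Q(m,n)+m$. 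One also checks $\hat{\pi}_1\in\mathcal{D}_{2,3N+1}$ (no part equals $2$, all gaps are admissible, largest part $6n+4m-1\le 3N+1$), that it carries exactly $m$ singletons, and that its chain-length-plus-pair-count equals $n$ with zero pairs.

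The bulk of the work is to show that the two $q$-factors in \eqref{KR2_Summand_1} are precisely the generating functions of the two independent families of motions. For the singletons I would reuse the analysis leading to \eqref{singleton_movement_GF}: the largest singleton starts at $6n+4m-1$ and can be pushed up to $3N+1$, i.e.\ through $3N-6n-4m+2$ steps, so the joint singleton motion is a partition into at most $m$ parts each bounded by $3N-6n-4m+2$, whose generating function is
\[
{m+3N-6n-4m+2\brack m}_q={3(N-2n-m)+2\brack m}_q.
\]
For the chain-and-pairs I would combine the long-distance move \eqref{long_dist_move} with the ordinary moves \eqref{move1}, \eqref{move2} and the crossings \eqref{jump1}--\eqref{jump3}. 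The move \eqref{long_dist_move} peels the top unit $(6n-5,6n-1)$ of the chain off as an ordinary pair $(6n-3,6n)$, adding $3$ to the size, advancing its center by $3/2$, and decreasing the chain length by one while increasing the pair count by one; hence it preserves the invariant ``chain length $+$ number of pairs $=n$''. Because the chain-end is $4$-distant from the nearest singleton before its first peel, no new crossing rule is needed, and the composite ``peel-then-travel'' of each unit behaves exactly like the free motion of a single pair whose center starts at $6n-3$. Thus the $n$ latent units of the chain move just as the $n$ honest pairs did in \eqref{pair_movement_GF}: each through at most $(3N-(6n-3)-3m)\cdot\tfrac23=2(N-2n-m+1)$ steps of size $3$, in weakly decreasing order by the non-crossing principle, so their joint generating function is
\[
{2(N-2n-m+1)+n\brack n}_{q^3}={2(N-2n-m)+n+2\brack n}_{q^3}.
\]

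Multiplying $q^{Q(m,n)+m}$ by the two displayed binomials reproduces \eqref{KR2_Summand_1}, and since the singleton motions and the chain-pair motions act on disjoint portions of the partition and are independently reversible, the product counts each admissible partition exactly once. The step I expect to be the main obstacle is the verification in the chain-pair paragraph: one must check carefully that interleaving the special peel \eqref{long_dist_move} with the ordinary pair moves is genuinely bijective --- that the peeled pairs never collide with the shrinking chain or with one another, that the weakly decreasing (non-crossing) ordering is maintained throughout so the aggregate motion is a bona fide partition into at most $n$ parts bounded by $2(N-2n-m+1)$, and that no configuration is produced twice. Establishing this, together with the routine size bookkeeping that every elementary move contributes a factor $q^3$ and every singleton push contributes its recorded weight, completes the proof.
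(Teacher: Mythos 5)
Your proposal is correct and follows essentially the same route as the paper: the minimal configuration $\hat{\pi}_1$ of \eqref{pi1_hat} with weight $Q(m,n)+m$, the singleton motions giving ${3(N-2n-m)+2\brack m}_q$ exactly as in \eqref{singleton_movement_GF}, and the peel move \eqref{long_dist_move} turning chain units into ordinary pairs (preserving the statistic ``chain length plus pair count $=n$'') whose motions, governed by \eqref{move1}, \eqref{move2}, \eqref{jump1}--\eqref{jump3}, yield ${2(N-2n-m)+n+2\brack n}_{q^3}$ as in \eqref{pair_movement_GF}. The bijectivity concern you flag is precisely what the paper disposes of by noting that the chain end is $4$-distant from the nearest singleton, so each peeled pair gets one free move before any crossing and no new crossing rules are needed; your write-up supplies the explicit size and step-count computations that the paper leaves to the reader.
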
 
To illustrate, we write 4 partitions (in ascending order) of $60$ from $D_{2,34}$, \[(\underbrace{1, 5, 7, 11}, 34),\ (\underbrace{5, 7}, \underbrace{12, 15}, 21),\ (1, 5, 7, 12, 15, 20),\text{  and  } (1,25,34).\] The first two partitions are both counted by \eqref{KR2_Summand_1} with $N=11$, $(m,n) = (1,2)$, but the latter two partitions cannot be counted by \eqref{KR2_Summand_1} for any choice of $N,\ m$, and $n$. This is due to there not being a valid initial chain configuration in these cases. 
%As an example, if we assume that $\lambda$ has the initial chain $\underbrace{1,5}$ then the presumed singleton $7$  is too close to the 
%initial chain and violates the definition of an initial chain. 
This brings us to the next minimal configuration. Let \begin{equation}\label{pi2_hat}\hat{\pi}_2 := (1,\underbrace{5,7},\underbrace{11,13},\dots, \underbrace{6n-1,6n+1}, 6n+2+4, \dots, 6n+2+4m),\end{equation} with size $Q(m,n)+4m+6n+1$. Similar to the previous case the largest pair $\underbrace{6n-1,6n+1}$ can move once freely with \eqref{move2} before the consideration of crossing over any singleton, except for the case $6n+1 = 3N+1$. Ignoring this case at the moment, the generating function of partitions from $D_{2,3N+1}$ with $n$ pairs (and an empty initial chain) and $m+1$ singletons is \begin{equation}\label{KR3_Summand2}q^{Q(m,n)+4m+6n+1}{3(N-2n-m)-1\brack m}_q {2(N-2n-m)+n\brack n}_{q^3}.\end{equation} The overlooked case appears when $N$ is even and $(m,n)=(0,N/2)$. This is due to the assumed one free movement of the largest pair in the construction of previous cases, where in this case the largest pair cannot move. We can add the related correction term to our calculations and all together we get the following theorem.
\begin{theorem}\label{KR3_THM2} The expression
\[q^{Q(m,n)+4m+6n+1}{3(N-2n-m)-1\brack m}_q {2(N-2n-m)+n\brack n}_{q^3} + \chi(N)\delta_{m,0}\delta_{n,N/2}q^{Q(0,N/2)+3N+1}\]
is the generating function for the number of partitions from $D_{2,3N+1}$ with $n$ pairs (and no valid initial chain) and $m+1$ singletons, where 1 is the sole singleton that does not move, where $\chi(N)$ is defined as in Theorem~\ref{Intro_Main_THM}.
\end{theorem}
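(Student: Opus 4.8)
The plan is to mirror the argument already used for Theorem~\ref{KR3_THM1}, applying the same Kur\c{s}ung\"oz-style rules of motion to the minimal configuration $\hat{\pi}_2$ of \eqref{pi2_hat}, and then to isolate and repair the single boundary case where the $q$-binomial bookkeeping degenerates. First I would record the structural data of $\hat{\pi}_2$: it has exactly $n$ pairs, $m$ movable singletons $6n+2+4,\dots,6n+2+4m$, and the immovable singleton $1$ (for a total of $m+1$ singletons), and its size is $Q(m,n)+4m+6n+1$, matching the $q$-exponent in \eqref{KR3_Summand2}. The fixed part $1$ contributes the factor $q^1$ and never participates in any motion, so it is simply carried along throughout.

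Next I would track the two independent families of motions, exactly as in Section~\ref{Sec_Motions}. For the singletons, the largest of the $m$ movable ones sits at $6n+4m+2$ and may be pushed up to the maximal allowed part $3N+1$ by the bijective moves behind \eqref{singleton_movement_GF}; its maximal displacement is $3N+1-(6n+4m+2)=3N-6n-4m-1$, so the full singleton motion is generated by ${3N-6n-3m-1\brack m}_q={3(N-2n-m)-1\brack m}_q$. For the pairs I would rerun the center-tracking computation: the largest pair $\underbrace{6n-1,6n+1}$ has center $6n$, each free move \eqref{move1}--\eqref{move2} advances the center by $3/2$, each crossing \eqref{jump1}--\eqref{jump3} advances it by $3+3/2$, and every move adds $3$ to the size. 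Solving $\frac32 f+\frac92 m=3N-6n$ for the total move count $f+m$ gives $2(N-2n-m)$, so the pair motion is generated by ${2(N-2n-m)+n\brack n}_{q^3}$, matching \eqref{pair_movement_GF}. Multiplying the two generating functions by $q^{Q(m,n)+4m+6n+1}$ yields \eqref{KR3_Summand2}, and by reversibility of the motions this enumerates precisely the partitions from $D_{2,3N+1}$ with $n$ pairs, an empty initial chain, and $m+1$ singletons.

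The delicate point, and the main obstacle, is the boundary case flagged just before the statement: the center count above silently assumes the largest pair can make at least its single free move, which fails exactly when that pair already equals $\underbrace{3N-1,3N+1}$. Since a pair can only sit atop the whole partition when no movable singletons lie above it, this happens precisely when $6n+1=3N+1$ with $m=0$, i.e. when $N$ is even and $(m,n)=(0,N/2)$. In that configuration the genuine count is $1$ (only $\hat{\pi}_2$ itself survives), yet the relevant binomial ${3(N-2n-m)-1\brack m}_q={-1\brack 0}_q$ vanishes by the convention in \eqref{Binom_def}, so \eqref{KR3_Summand2} contributes $0$ instead of the true term $q^{Q(0,N/2)+3N+1}$. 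I would therefore verify that this is the only configuration in which the "one free move" hypothesis fails, compute the size of that lone partition as $Q(0,N/2)+6\cdot\frac{N}{2}+1=Q(0,N/2)+3N+1$, and add back the correction $\chi(N)\delta_{m,0}\delta_{n,N/2}q^{Q(0,N/2)+3N+1}$. Summing the corrected summand over all $m,n\ge 0$ then produces the asserted generating function, completing the proof.
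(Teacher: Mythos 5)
Your proposal is correct and follows essentially the same route as the paper: the same singleton generating function $\smash{{3(N-2n-m)-1\brack m}_q}$ and pair generating function $\smash{{2(N-2n-m)+n\brack n}_{q^3}}$ obtained by tracking centers of pairs in $\hat{\pi}_2$ of \eqref{pi2_hat}, the same identification of the sole degenerate case $(m,n)=(0,N/2)$ with $N$ even, and the same correction term $\chi(N)\delta_{m,0}\delta_{n,N/2}q^{Q(0,N/2)+3N+1}$. The only cosmetic difference is diagnostic framing: you attribute the failure to the convention \eqref{Binom_def} forcing ${-1\brack 0}_q=0$ while the genuine count is $1$, whereas the paper phrases it as the largest pair losing its assumed single free move when $6n+1=3N+1$; these are equivalent descriptions of the same boundary phenomenon.
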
 
%The generating function in this case covers every partition from $D_{2,3N+1}$ with the smallest part equal to 1 and the second smallest %part $>5$ or the partitions with the smallest part $1$ with only ordinary pairs. 

Finally, for any given Capparelli partition from $D_{2,3N+1}$ one can uniquely identify either if the partition is coming from the minimal configuration $\hat{\pi}_1$ or $\hat{\pi}_2$ by looking for a non-empty initial chain. If the initial chain exists or if the smallest part is $>1$ then the partition is a descendant of $\hat{\pi}_1$, else it is a descendant of $\hat{\pi}_2$. Hence, putting together Theorem~\ref{KR3_THM1} and \ref{KR3_THM2} and summing over all $m,n\geq 0$ we get:
\begin{corollary} We have,
\begin{align*}
&\sum_{m,n\geq 0} q^{Q(m,n)+m}{3(N-2n-m)+2\brack m}_q {2(N-2n-m)+n+2\brack n}_{q^3}\\ &\hspace{1cm}+ \sum_{m,n\geq 0} q^{Q(m,n)+4m+6n+1}{3(N-2n-m)-1\brack m}_q {2(N-2n-m)+n\brack n}_{q^3} \\
&\hspace{1cm}+ q^{Q(0,N/2) + 3N + 1}\chi(N)\\
&=G_{2,3N+1}(1,1,q),
\end{align*}
where $\chi(N)$ is as it is defined in Theorem~\ref{Intro_Main_THM}, and $G_{2,3N+1}$ is as defined in Section~\ref{Sec_recurrences}.
\end{corollary}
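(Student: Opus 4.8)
The plan is to derive this corollary directly from Theorems~\ref{KR3_THM1} and \ref{KR3_THM2} by checking that the two combinatorial families they enumerate partition $\mathcal{D}_{2,3N+1}$ both disjointly and exhaustively. First I would recall that, by the very definition of $G_{m,N}(a,b,q)=\sum_{\pi\in\mathcal{D}_{m,N}}a^{\nu_{2,3}(\pi)}b^{\nu_{1,3}(\pi)}q^{|\pi|}$, the specialization $a=b=1$ discards the two statistics and leaves the pure size-generating function, so $G_{2,3N+1}(1,1,q)=\sum_{\pi\in\mathcal{D}_{2,3N+1}}q^{|\pi|}$. Hence it suffices to show that the left-hand side of the corollary is this size-generating function over all of $\mathcal{D}_{2,3N+1}$.

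The key structural step is the dichotomy stated just before the corollary. I would argue that the test ``does $\pi$ possess a non-empty initial chain, or is its smallest part $>1$?'' is well defined on every Capparelli partition and cleanly separates $\mathcal{D}_{2,3N+1}$ into two classes: those answering ``yes'' descend from $\hat{\pi}_1$ and are precisely the partitions enumerated (according to their number of singletons $m$ and their combined chain-length-plus-pairs count $n$) by Theorem~\ref{KR3_THM1}; those answering ``no'' have $1$ as a part with no valid initial chain, descend from $\hat{\pi}_2$, and are enumerated by Theorem~\ref{KR3_THM2}. Because having $1$ together with the further $\pm1\bmod 6$ parts that would start a chain is exactly what is excluded in the $\hat{\pi}_2$ case, the two alternatives are mutually exclusive and jointly exhaustive, so no partition is double-counted or omitted.

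Granting the dichotomy, the rest is formal. I would sum the generating function of Theorem~\ref{KR3_THM1} over all $m,n\ge 0$ to obtain the size-generating function of the $\hat{\pi}_1$-descendants, and likewise sum that of Theorem~\ref{KR3_THM2} over all $m,n\ge 0$ for the $\hat{\pi}_2$-descendants. Adding the two sums, the Kronecker-delta correction $\chi(N)\delta_{m,0}\delta_{n,N/2}q^{Q(0,N/2)+3N+1}$ from Theorem~\ref{KR3_THM2} collapses under the double sum to the single term $\chi(N)q^{Q(0,N/2)+3N+1}$ shown in the statement, and the total becomes $\sum_{\pi\in\mathcal{D}_{2,3N+1}}q^{|\pi|}=G_{2,3N+1}(1,1,q)$, as claimed.

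The main obstacle I anticipate is not the summation, which is routine, but justifying the dichotomy rigorously: one must confirm that ``valid initial chain'' is an unambiguous property of an arbitrary partition and that the classification leaves no gaps. In particular, the boundary configuration with $N$ even and $(m,n)=(0,N/2)$, where the largest pair of $\hat{\pi}_2$ has no free move, is exactly the case the correction term is designed to repair; I would verify carefully that this single exceptional partition is assigned to precisely one side of the dichotomy and contributes its weight $q^{Q(0,N/2)+3N+1}$ once, neither lost nor counted twice.
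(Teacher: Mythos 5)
Your proposal matches the paper's own argument: the paper likewise invokes the dichotomy (non-empty initial chain or smallest part $>1$ means descendant of $\hat{\pi}_1$, otherwise descendant of $\hat{\pi}_2$) and then sums the generating functions of Theorems~\ref{KR3_THM1} and \ref{KR3_THM2} over all $m,n\geq 0$, with the delta-correction collapsing to the single term $\chi(N)q^{Q(0,N/2)+3N+1}$. Your added caution about verifying the exceptional $(m,n)=(0,N/2)$ case is a reasonable refinement of the same approach, not a departure from it.
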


\section{$q$-Trinomial Identities}\label{Sec_Trinomials}

In \cite{Andrews_Baxter}, Andrews and Baxter defined the very fruitful notion of trinomial coefficients  \begin{equation}\label{tri_def}
{N ;b;q \choose a}_2 := \sum_{j=0}^N q^{j(j+b)} {N \brack j}_q{N-j\brack j+a}_q.
\end{equation} We will drop the top variable $b$ in our notation in cases when $a=b$, and denote these cases simply as \[Tr{N \brack a}_q := {N ;a;q \choose a}_2. \]

Many recurrences of trinomial coefficients are studied and  presented in \cite{Andrews_Capparelli, Andrews_Baxter}. One of these recurrences is Lemma~2 of \cite{Andrews_Capparelli}: \begin{align}
\nonumber Tr{N+1\brack a}_q &= (1+q^N)Tr{N\brack a}_q + q^{N+a+1}Tr{N-1\brack a+2}_q\\ 
\label{Lemma_2}&+q^{N-a+1}Tr{N-1 \brack a-2}_q +q^{2N}Tr{N-1 \brack a}_q\\
\nonumber&+q^{2N-1}(1-q^{N-1})Tr{N-2 \brack a}_q,
\end{align} for any $n\in\Z_{>0}$. Note that in this recurrence the parity of the bottom variables stays the same.

Andrews' original proof of the Capparelli's identity of Theorem~\ref{capparelliconj} relies on proving the following theorem \cite{Andrews_Capparelli}.
\begin{theorem}[Andrews, 1994]\label{G1_tri_THM}
\begin{equation}\label{G1_tri}G_{1,3N+1}(1/t,t,q) = \sum_{j=-\infty}^{\infty} t^j q^{3j^2 +j} Tr{ N+1 \brack 2j}_{q^3}. \end{equation}\end{theorem}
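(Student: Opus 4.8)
The plan is to prove Theorem~\ref{G1_tri_THM} by the same recursive strategy used throughout the paper: show that the right-hand side
\[
R_N(t,q) := \sum_{j=-\infty}^{\infty} t^j q^{3j^2 +j} Tr{N+1 \brack 2j}_{q^3}
\]
satisfies the third-order recurrence \eqref{Andrews_recurrence} with $a=1/t$ and $b=t$, and then to match a small number of initial conditions against those for $G_{1,3N+1}(1/t,t,q)$. Since \eqref{Andrews_recurrence} together with the initial data \eqref{Initial_conditions_Min} uniquely determines the sequence $G_{1,3N+1}$, verifying these two things forces $R_N = G_{1,3N+1}(1/t,t,q)$ for all $N\geq 0$.

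First I would substitute the trinomial recurrence \eqref{Lemma_2} (with $q\mapsto q^3$, $a\mapsto 2j$, and $N\mapsto N+1$, so that the bottom entries retain even parity) into $R_{N+1}$ and collect the resulting terms by the shifted trinomials $Tr{N+1\brack\cdot}_{q^3}$, $Tr{N\brack\cdot}_{q^3}$, and $Tr{N-1\brack\cdot}_{q^3}$. The key manipulation is reindexing: each term like $q^{3(N+1)+2j+1}Tr{N\brack 2j+2}_{q^3}$ must be absorbed into the sum defining a lower $R$ by shifting $j\mapsto j-1$ (or $j\mapsto j+1$), and the prefactor $t^j q^{3j^2+j}$ transforms in a controlled way under such a shift. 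One computes, for instance, $t^{j}q^{3j^2+j}\cdot q^{6j}= t\cdot t^{j-1}q^{3(j-1)^2+(j-1)}\cdot q^{\text{(constant in }j)}$, so that after reindexing the $j$-dependence repackages cleanly into a copy of $R_{N-1}$ times an explicit power of $q$ and $t$. Carrying this out for all five terms of \eqref{Lemma_2} and comparing the accumulated coefficients of $R_N$, $R_{N-1}$, $R_{N-2}$ against the coefficients $(1+q^{3N})$, $(aq^{3N-1}+bq^{3N+1}+abq^{6N})$, and $abq^{6N-3}(1-q^{3N-3})$ in \eqref{Andrews_recurrence} (with $a=1/t$, $b=t$) is the heart of the verification.

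The main obstacle is bookkeeping the quadratic Gaussian weight $q^{3j^2+j}$ under the index shifts so that the constant (in $j$) powers of $q$ emerging from completing the square match the target coefficients \emph{exactly}, together with the correct powers of $t$. The asymmetry $q^{3j^2+j}$ (rather than a symmetric $q^{3j^2}$) is precisely what produces the asymmetric $t$ and $1/t$ weights $aq^{3N-1}$ versus $bq^{3N+1}$ in \eqref{Andrews_recurrence}, so the $b$- and $a$-branches come from the $+2$ and $-2$ shifts in the bottom index, and one must be careful that the two Andrews--Baxter terms $q^{N+a+1}Tr{N-1\brack a+2}$ and $q^{N-a+1}Tr{N-1\brack a-2}$ indeed land on $R_{N-1}$ with the asymmetric coefficients rather than getting swapped. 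A subtle but routine point is that the bottom indices $2j$ never leave even parity under \eqref{Lemma_2}, so no cross terms to odd-indexed trinomials appear and the sum stays within the ansatz.

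Finally I would check the base cases. Using the small-$N$ values of the trinomials $Tr{1\brack 0}_{q^3}$, $Tr{0\brack 0}_{q^3}$, etc., one evaluates $R_{-2}$, $R_{-1}$, $R_0$, $R_1$ directly; only $R_{-2}$ and $R_1$ (matching $G_{1,-2}(1/t,t,q)=1$ and $G_{1,1}(1/t,t,q)=1$ from \eqref{Initial_conditions_Min} with $m=1$) are strictly needed once the third-order recurrence is established, though confirming the intermediate values gives a useful consistency check. I expect the base-case verification to be short, with the reindexing in the recurrence step being the only place demanding genuine care.
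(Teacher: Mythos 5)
Your proposal is correct and is essentially the paper's own proof (which the paper only sketches, crediting Andrews): substitute \eqref{Lemma_2} with $q\mapsto q^3$ into the sum, reindex $j$ so the shifted trinomials reassemble into lower members of the sequence, match the result against \eqref{Andrews_recurrence} with $(a,b)=(1/t,t)$, and compare initial conditions --- exactly the argument the paper carries out explicitly for the companion result, Theorem~\ref{G2_tri_exp}. Two small slips to fix in the write-up: the $+2$ shift of the bottom index produces the $a=1/t$ branch $aq^{3N-1}$ and the $-2$ shift the $b=t$ branch $bq^{3N+1}$ (you have them swapped), and since $R_N$ corresponds to $G_{1,3N+1}$, the initial values to check are $R_{-1}=G_{1,-2}=1$ and $R_0=G_{1,1}=1$, not $R_{-2}$ and $R_1$.
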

This connection is proven by employing \eqref{Lemma_2} to show that the right side of \eqref{G1_tri} satisfies \eqref{Andrews_recurrence} with the same initial conditions.

One crucial combinatorial corollary of this result (Corollary~3 in \cite{Andrews_Capparelli}) follows immediately by extracting the powers of $t$ in \eqref{G1_tri}.
\begin{corollary}[Alladi, Andrews, Gordon, 1995]\label{Comb_Corollary_Andrews} The polynomial
\[q^{3j^2 +j} Tr{ N+1 \brack 2j}_{q^3}\]
is the generating function for the number of partitions into parts $\leq 3N+1$, in which no 1 appears as a part, the consecutive parts differ by at least 4 except if the consecutive part pairs are $(3n,3n+3)$, or $(3n-1,3n+1)$ for some $n\in\Z_{>0}$, and $j$ is equal to the number of parts congruent to 1 (mod 3) minus the number of parts congruent to $2$ (mod 3).
\end{corollary}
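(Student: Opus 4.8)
The plan is to deduce this corollary directly from Theorem~\ref{G1_tri_THM} by extracting a single coefficient of $t$, so essentially no new analytic work is needed. The generating function $G_{1,3N+1}(1/t,t,q)$ is, by the very definition of $G_{1,N}(a,b,q)$ in Section~\ref{Sec_recurrences}, the sum over partitions $\pi\in\mathcal{D}_{1,3N+1}$ of $t^{-\nu_{2,3}(\pi)} t^{\nu_{1,3}(\pi)} q^{|\pi|} = t^{\nu_{1,3}(\pi)-\nu_{2,3}(\pi)} q^{|\pi|}$. Here $\mathcal{D}_{1,3N+1}$ consists exactly of the partitions with largest part $\leq 3N+1$ satisfying the Capparelli difference conditions (parts differing by at least $4$, with the two exceptional pairs allowed) and with no part equal to $1$; this is precisely the set of partitions described in the statement. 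Thus the exponent of $t$ in $G_{1,3N+1}(1/t,t,q)$ records exactly the quantity $j=\nu_{1,3}(\pi)-\nu_{2,3}(\pi)$, the number of parts $\equiv 1 \imod{3}$ minus the number of parts $\equiv 2 \imod{3}$, which matches the integer $j$ in the corollary.

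First I would invoke Theorem~\ref{G1_tri_THM} to write
\begin{equation}\nonumber
\sum_{\pi\in\mathcal{D}_{1,3N+1}} t^{\,\nu_{1,3}(\pi)-\nu_{2,3}(\pi)}\, q^{|\pi|}
= G_{1,3N+1}(1/t,t,q)
= \sum_{j=-\infty}^{\infty} t^{j}\, q^{3j^2+j}\, Tr{N+1 \brack 2j}_{q^3}.
\end{equation}
Both sides are formal power series in $q$ whose coefficients are (Laurent) polynomials in $t$, so I may legitimately compare coefficients of a fixed power $t^{j}$. Extracting $[t^{j}]$ on the left gives the generating function $\sum q^{|\pi|}$ over precisely those $\pi\in\mathcal{D}_{1,3N+1}$ for which $\nu_{1,3}(\pi)-\nu_{2,3}(\pi)=j$; extracting $[t^{j}]$ on the right gives $q^{3j^2+j}\,Tr{N+1 \brack 2j}_{q^3}$. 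Equating the two yields exactly the assertion of the corollary.

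The only point requiring care, and the step I expect to be the main (though modest) obstacle, is justifying that coefficient extraction in $t$ is valid term-by-term in $q$. This is clean because, for each fixed power of $q$, only finitely many partitions in $\mathcal{D}_{1,3N+1}$ contribute (the size bounds the number of parts and hence bounds $|\,\nu_{1,3}-\nu_{2,3}|$), so the coefficient of any $q$-power is a genuine Laurent polynomial in $t$ and the extraction is well defined; correspondingly the sum over $j$ on the right is locally finite in $q$ since $q^{3j^2+j}$ forces $|j|$ to be bounded for each power of $q$. Once this bookkeeping is recorded, the identification of the combinatorial side with the trinomial side is immediate and the corollary follows. I would close by remarking that the restriction ``no $1$ appears'' is inherited from the defining condition $\pi_i \neq m = 1$ for $\mathcal{D}_{1,N}$, and that the two exceptional difference pairs listed in the statement are literally the exceptions built into condition~(ii) of the definition of $G_{m,N}$, so no additional verification of the partition description is needed.
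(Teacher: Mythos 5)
Your proposal is correct and is exactly the paper's argument: the paper proves this corollary in one line by noting that it ``follows immediately by extracting the powers of $t$'' in Theorem~\ref{G1_tri_THM}, using the definition $G_{1,3N+1}(1/t,t,q)=\sum_{\pi\in\mathcal{D}_{1,3N+1}}t^{\nu_{1,3}(\pi)-\nu_{2,3}(\pi)}q^{|\pi|}$, which is precisely your coefficient-extraction step. Your additional remarks on why the extraction is legitimate (both sides are polynomials in $q$ with Laurent-polynomial coefficients in $t$) are a harmless elaboration of what the paper leaves implicit.
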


Furthermore, using the explicit expressions of Corollary~\ref{polyAAG} for $G_{1,3N+1}(1/t,t,q)$ and \eqref{G1_tri} together Alladi, Andrews and Gordon proved a new identity for the trinomial coefficients that appear in \eqref{G1_tri}. They used the Finite Jacobi Triple Product identity \cite[p.49, Ex.1]{Theory_of_Partitions} \[(-q^2/t , -tq^4;q^6)_l = \sum_{n=-l}^{l} t^n q^{3n^2+n} {2l\brack l-n}_{q^6} \] in \eqref{polyAAG} and extracted exponents of $t$. This yields the identity \begin{equation}\label{tri_identity_2n}Tr{N \brack 2j}_q = \sum_{n=0}^{\lfloor N/2 \rfloor} {N\brack 2n}_q {2n \brack n+j}_{q^2}q^{N-2n \choose 2}\end{equation} after $N\mapsto N-1$ and $q^3\mapsto q$.
We would like to note that they also observed the analogous identity by extensive computation \begin{equation}\label{tri_identity_2n+1}Tr{N \brack 2j+1}_q = \sum_{n=0}^{\lfloor N/2 \rfloor} {N\brack 2n+1}_q {2n+1 \brack n+j+1}_{q^2}q^{N-2n-1 \choose 2}.\end{equation} Now, with the new expression \eqref{U_poly} and the recurrence \eqref{Lemma_2} we can prove this identity \eqref{tri_identity_2n+1}.

Indeed, we would like to follow the line of Andrews from now on and present and prove the analogous results to the ones presented above. We begin with a second representation of $G_{2,3N+1}(t,1/t,q)$ using trinomial coefficients.
\begin{theorem}\label{G2_tri_exp}
\begin{equation}\label{G2_tri}G_{2,3N+1}(t,1/t,q) = \sum_{j=-\infty}^{\infty} t^j q^{3j^2 +2j} Tr{ N+1 \brack 2j+1}_{q^3}.\end{equation}
\end{theorem}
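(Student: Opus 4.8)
The plan is to mirror Andrews' proof of Theorem~\ref{G1_tri_THM} step for step, only with the odd lower index $2j+1$ and the adjusted Gaussian weight $q^{3j^2+2j}$ in place of $2j$ and $q^{3j^2+j}$. Write $H_N:=\sum_{j=-\infty}^{\infty} t^j q^{3j^2+2j} Tr{N+1 \brack 2j+1}_{q^3}$ for the right-hand side of \eqref{G2_tri}. First I would specialize \eqref{Andrews_recurrence} to $m=2$, $a=t$, $b=1/t$; since $ab=t\cdot t^{-1}=1$ this reads
\[ G_{2,3N+1}(t,1/t,q) = (1+q^{3N})\,G_{2,3(N-1)+1} + (tq^{3N-1}+t^{-1}q^{3N+1}+q^{6N})\,G_{2,3(N-2)+1} + q^{6N-3}(1-q^{3N-3})\,G_{2,3(N-3)+1}. \]
By the uniqueness statement accompanying \eqref{Andrews_recurrence}, it then suffices to prove that $H_N$ satisfies this recurrence for $N\ge 1$ and agrees with $G_{2,3N+1}(t,1/t,q)$ at the two base values.

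For the recurrence I would apply the trinomial relation \eqref{Lemma_2} with $q\mapsto q^3$ and $a=2j+1$ to each summand $Tr{N+1 \brack 2j+1}_{q^3}$ of $H_N$, producing five families of terms. The three families built from $Tr{N \brack 2j+1}_{q^3}$, $Tr{N-1 \brack 2j+1}_{q^3}$ and $Tr{N-2 \brack 2j+1}_{q^3}$ leave the weight $t^j q^{3j^2+2j}$ untouched and so reassemble immediately into $(1+q^{3N})H_{N-1}$, $q^{6N}H_{N-2}$ and $q^{6N-3}(1-q^{3N-3})H_{N-3}$. The remaining two families involve $Tr{N-1 \brack 2j+3}_{q^3}$ and $Tr{N-1 \brack 2j-1}_{q^3}$, carrying the coefficients $q^{3N+6j+6}$ and $q^{3N-6j}$ from \eqref{Lemma_2}; here I would re-index the first by $j=k-1$ and the second by $j=k+1$ so that both lower entries become $2k+1$. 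The key computation is that the combined exponent transforms as $3j^2+2j+(3N+6j+6)\mapsto(3k^2+2k)+3N+1$ in the first and as $3j^2+2j+(3N-6j)\mapsto(3k^2+2k)+3N-1$ in the second; after factoring out the surplus $t^{-1}$ and $t$ this yields exactly $t^{-1}q^{3N+1}H_{N-2}$ and $tq^{3N-1}H_{N-2}$. Collecting, the coefficient of $H_{N-2}$ becomes $tq^{3N-1}+t^{-1}q^{3N+1}+q^{6N}$, matching the specialized recurrence termwise.

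For the initial conditions I would use $Tr{0 \brack a}_q={0 \brack a}_q=\delta_{a,0}$ together with the direct evaluation of $Tr{1 \brack a}_q$ from \eqref{tri_def}. Since $2j+1$ is always odd, $Tr{0 \brack 2j+1}_{q^3}=0$ for every $j$, so $H_{-1}=0=G_{2,-2}$. For $H_0$ only $j=0$ and $j=-1$ contribute (giving $Tr{1 \brack 1}_{q^3}=1$ and $Tr{1 \brack -1}_{q^3}=1$), and with the weights $t^0q^0=1$ and $t^{-1}q^{3\cdot 1+2\cdot(-1)}=t^{-1}q$ one gets $H_0=1+q/t=G_{2,1}(t,1/t,q)$. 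As in the $G_1$ case, only these two base values are required, because the coefficient $q^{6N-3}(1-q^{3N-3})$ of $H_{N-3}$ vanishes at $N=1$, so the recurrence never reaches an index below $H_{-1}$.

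The one delicate point — and the hard part — is the bookkeeping in the two index shifts: one must track how the Gaussian weight $q^{3j^2+2j}$ recombines with the shift-induced powers $q^{3N+6j+6}$ and $q^{3N-6j}$ to land precisely on $q^{3N\pm1}$ with the compensating factors $t^{\mp1}$. This is exactly where the weight $3j^2+2j$ (rather than $3j^2+j$) and the odd lower index $2j+1$ are forced, and verifying that the two shifted sums produce the symmetric pair $tq^{3N-1}+t^{-1}q^{3N+1}$ rather than some asymmetric expression is the main computational check; everything else is the mechanical reassembly already validated for $G_{1,3N+1}$.
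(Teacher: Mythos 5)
Your proposal is correct and follows essentially the same route as the paper: apply the trinomial recurrence \eqref{Lemma_2} with $q\mapsto q^3$, $a=2j+1$, re-index the $2j\pm 2$ shifted families to recover the recurrence \eqref{Andrews_recurrence} specialized at $(a,b)=(t,1/t)$, and conclude by matching the two initial conditions \eqref{Initial_conditions_Min}. Your base-case evaluation $H_0 = 1+q/t = G_{2,1}(t,1/t,q)$ is in fact the correct one (the paper's proof misstates this value as $1$, an evident typo, since the $j=-1$ term contributes $t^{-1}q$), so your verification is, if anything, slightly more careful on this point.
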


\begin{proof} Let \[C'_{N}(t,q) = \sum_{j=-\infty}^{\infty} t^j q^{3j^2 +2j} Tr{ N+1 \brack 2j+1}_{q^3}.\] Then, after writing \eqref{Lemma_2} for the trinomial \[Tr{N+1 \brack 2j+1},\] multiplying both sides with $t^j q^{3j^2 +2j}$ and summing over $j$ from $-N-1$ to $N+1$ one sees that \begin{align}
\nonumber C'_N(t,q) = \sum_{j=-\infty}^{\infty} &\left\{(1+q^{3N})Tr{N\brack 2j+1}_{q^3} + q^{3(N+2j+2)}Tr{N-1 \brack 2j+3}{q^3}\right.\\
\nonumber &+q^{3(N-2j)}Tr{N-1 \brack 2j-1}_{q^3} + q^{6N}Tr{N-1\brack 2j+1}_{q^3}\\
\nonumber &\left.+q^{6N-3}(1-q^{3N-3})Tr{N-2\brack 2j+1}_{q^3}\right\}\\[-1.5ex]\nonumber \\
\label{C_rec} & = (1+q^{3N})C'_{N-1}(t,q)+ \left(\frac{q^{3N+1}}{t} + q^{6N} + tq^{3N-1}\right) C'_{N-2}(t,q)\\
\nonumber &+ q^{6N-3}(1-q^{3N-3})C'_{N-3}(t,q).
\end{align}\vspace{.5cm}
This proves that $G_{2,3N+1}(1/t,t,q)$ and $C'(t,q)$ satisfy the same recurrence. The initial conditions \[C'_{-1}(t,q) = 0,\text{ and } C'_0(t,q) = 1\] define the sequence of $C'_N(t,q)$ uniquely, for any non-negative $N$. Comparison of these initial conditions with \eqref{Initial_conditions_Min}, where $(a,b) = (t,1/t)$ finishes the proof.
\end{proof}

Theorem~\ref{G2_tri_exp} yields the following perfect companion to Corollary~\ref{Comb_Corollary_Andrews}.

\begin{theorem}\label{Comb_Corollary2} The polynomial
\[q^{3j^2 +2j} Tr{ N+1 \brack 2j+1}_{q^3}\]
is the generating function for the number of partitions into parts $\leq 3N+1$, in which no 2 appear as a part, the consecutive parts differ by at least 4 except if the consecutive part pairs are $(3n,3n+3)$, or $(3n-1,3n+1)$ for some $n\in\Z_{>0}$, and $j$ is equal to the number of parts congruent to 2 (mod 3) minus the number of parts congruent to 1 (mod 3).
\end{theorem}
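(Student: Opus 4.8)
The plan is to obtain the statement directly by extracting the coefficient of $t^j$ in Theorem~\ref{G2_tri_exp}, in exact parallel with the way Corollary~\ref{Comb_Corollary_Andrews} is read off from Theorem~\ref{G1_tri_THM}. First I would recall the combinatorial definition of the generating function from Section~\ref{Sec_recurrences}, namely
\[G_{2,3N+1}(a,b,q) = \sum_{\pi\in\mathcal{D}_{2,3N+1}} a^{\nu_{2,3}(\pi)}b^{\nu_{1,3}(\pi)}q^{|\pi|},\]
where $\nu_{i,3}(\pi)$ denotes the number of parts of $\pi$ that are congruent to $i$ modulo $3$.

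Next I would specialize to $a=t$ and $b=1/t$, so that every $\pi\in\mathcal{D}_{2,3N+1}$ contributes $t^{\nu_{2,3}(\pi)-\nu_{1,3}(\pi)}q^{|\pi|}$; thus the exponent of $t$ records exactly the number of parts congruent to $2$ modulo $3$ minus the number congruent to $1$ modulo $3$. Equating this with the right-hand side of \eqref{G2_tri} gives
\[\sum_{\pi\in\mathcal{D}_{2,3N+1}} t^{\nu_{2,3}(\pi)-\nu_{1,3}(\pi)}q^{|\pi|} = \sum_{j=-\infty}^{\infty} t^j q^{3j^2 +2j} Tr{ N+1 \brack 2j+1}_{q^3}.\]
Comparing coefficients of $t^j$ then isolates, on the left, the generating function for those $\pi\in\mathcal{D}_{2,3N+1}$ with $\nu_{2,3}(\pi)-\nu_{1,3}(\pi)=j$, and equates it to the single term $q^{3j^2+2j} Tr{ N+1 \brack 2j+1}_{q^3}$ on the right.

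The only remaining step is to unwind the definition of $\mathcal{D}_{2,3N+1}$ into the difference conditions appearing in the statement. By the definitions in Section~\ref{Sec_intro}, $\pi\in\mathcal{D}_{2,3N+1}$ precisely when $\pi$ has no part equal to $2$, all parts are at most $3N+1$, and consecutive parts differ by at least $4$ except when they form a pair $(3n,3n+3)$ or $(3n-1,3n+1)$ for some $n\in\Z_{>0}$ --- which is verbatim the description of the counted partitions. I do not expect any genuine obstacle here: the entire analytic content is already contained in Theorem~\ref{G2_tri_exp}, and what is left is the bookkeeping of matching the specialization $(a,b)=(t,1/t)$ of the refined generating function to the definition of $\mathcal{D}_2$. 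This is exactly why the result is best presented as a companion corollary to Theorem~\ref{G2_tri_exp}, rather than requiring a fresh recursive argument of its own.
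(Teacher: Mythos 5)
Your proposal is correct and is exactly the paper's argument: the paper obtains Theorem~\ref{Comb_Corollary2} by extracting the coefficient of $t^j$ from the identity $G_{2,3N+1}(t,1/t,q)=\sum_{j=-\infty}^{\infty}t^jq^{3j^2+2j}Tr{N+1\brack 2j+1}_{q^3}$ of Theorem~\ref{G2_tri_exp}, in direct parallel with how Corollary~\ref{Comb_Corollary_Andrews} is read off from Theorem~\ref{G1_tri_THM}. Your bookkeeping --- that the specialization $(a,b)=(t,1/t)$ makes the exponent of $t$ equal to $\nu_{2,3}(\pi)-\nu_{1,3}(\pi)$, and that the counted partitions are precisely those of $\mathcal{D}_{2,3N+1}$ --- is the same as the paper's.
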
  

One example of this corollary is presented in Table~\ref{Table_Corollary}.

\begin{table}[htp] \caption{Example of Corollary~\ref{Comb_Corollary2} with $N=6$ and $j=2$.}\label{Table_Corollary}

\begin{align*}q^{3\cdot 2^2 +2\cdot 2} Tr{ 6+1 \brack 2\cdot2+1}_{q^3} &=\\ q^{16} +q^{19} + &2q^{22}+2q^{25}+3q^{28}+3q^{31}+4q^{34}+3q^{37}+3q^{40}+2q^{43}+2q^{46}+q^{49}+q^{52}
%{q}^{52}+{q}^{49}+&2{q}^{46}+2{q}^{43}+3{q}^{40}+3{q}^{37}+4{q}^{34}+3\,{q}^{31}+3{q}^{28}+2{q}^{25}+2\,{q}^{22}+{q}^{19}+{q}^{16}
\end{align*}
\vspace{.1cm}
List of the related sizes and partitions, where 1 and 2 modulo 3 parts are colored differently.
\vspace{.1cm}
\[\begin{array}{cccccccc}
n 	& \pi 			& n 	& \pi 				& n 	& \pi				& n 	& \pi			\\
16  & ({\color{blue} 11},{\color{blue} 5})	& 28	& ({\color{blue} 17},{\color{blue} 8},3)		&34		&({\color{blue} 17},12,{\color{blue} 5})		& 40	&(18,{\color{blue} 14},{\color{blue} 8})		\\
19  & ({\color{blue} 14}, {\color{blue} 5})	&		& ({\color{blue} 17},{\color{blue} 11})		&		&(18,{\color{blue} 11},{\color{blue} 5})		& 43	&({\color{blue} 17},12,9,{\color{blue} 5})	\\
22  & ({\color{blue} 14},{\color{blue} 8})	& 31	& (15,{\color{blue} 11},{\color{blue} 5})	&	37		&({\color{blue} 17},{\color{blue} 11},6,3)	&		&(18,{\color{blue} 14},{\color{blue} 8},3)	\\
    & ({\color{blue} 17}, {\color{blue} 5})	&  		& ({\color{blue} 17},9,{\color{blue} 5})		&		&({\color{blue} 17},12,{\color{blue} 8})		&46 	& ({\color{blue} 17},{\color{red} 13},{\color{blue} 11},{\color{blue} 5})	\\
25	& ({\color{blue} 14},{\color{blue} 8},3)	&		& ({\color{blue} 17},{\color{blue} 11},3)		&		&(18,{\color{blue} 14},{\color{blue} 5})		&		&(18,{\color{blue} 14},9,{\color{blue} 5})	\\
	& ({\color{blue} 17},{\color{blue} 8})	& 34 	&({\color{blue} 17},{\color{blue} 11},{\color{blue} 5},1)	& 40	&({{\color{blue} 17}},{{\color{blue} 11}},{{\color{red} 7}},{{\color{blue} 5}})	& 49	&(18,15,{\color{blue} 11},{\color{blue} 5})	\\
28  & ({\color{blue} 14},9,{\color{blue} 5})	&		&({\color{blue} 17},{\color{blue} 11},6)		&		&({{\color{blue} 17}},12,{{\color{blue} 8}},3)	& 52	& ({\color{red} 19},{\color{blue} 17},{\color{blue} 11},{\color{blue} 5})
\end{array}\]
\end{table}

We now employ the Finite Jacobi Triple Product identity \cite[p.49, Ex.1]{Theory_of_Partitions} \[(-q/t ; q^6)_{l+1}(-tq^5;q^6)_l = \sum_{n=-l-1}^{l} t^n q^{3n^2+2n} {2l+1\brack l-n}_{q^6} \] in \eqref{U_poly}. Extracting exponents of $t$ in the outcome proves identity \eqref{tri_identity_2n+1}. We present this result once again as a theorem.

\begin{theorem} For any $N\in\Z_{\geq 0}$,
\begin{equation*}Tr{N \brack 2j+1}_q = \sum_{n=0}^{\lfloor N/2 \rfloor} {N\brack 2n+1}_q {2n+1 \brack n+j+1}_{q^2}q^{N-2n-1 \choose 2}.\end{equation*}
\end{theorem}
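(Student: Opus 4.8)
The plan is to play the two known representations of $G_{2,3N+1}$ against each other, exactly as Alladi, Andrews and Gordon did to obtain \eqref{tri_identity_2n}. On one hand, Theorem~\ref{G2_tri_exp} gives the trinomial expansion \eqref{G2_tri}; on the other hand, the explicit formula \eqref{G2_U_poly}, i.e.\ $U(a,b,q,N)$ from \eqref{U_poly}, gives a closed polynomial form. I would specialize both to $(a,b)=(t,1/t)$, so that the two expressions for $G_{2,3N+1}(t,1/t,q)$ can be equated and compared coefficientwise in $t$.

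First I would rewrite the product appearing in \eqref{U_poly} under this specialization as $(-tq^5;q^6)_l\,(-q/t;q^6)_{l+1}$ and apply the Finite Jacobi Triple Product identity
\[(-q/t;q^6)_{l+1}(-tq^5;q^6)_l = \sum_{n=-l-1}^{l} t^n q^{3n^2+2n} {2l+1\brack l-n}_{q^6}.\]
Substituting this into \eqref{U_poly} turns $G_{2,3N+1}(t,1/t,q)$ into the double sum
\[\sum_{l=0}^{\lfloor N/2\rfloor} q^{3{N-2l\choose 2}} {N+1\brack 2l+1}_{q^3}\sum_{n=-l-1}^{l} t^n q^{3n^2+2n}{2l+1\brack l-n}_{q^6}.\]

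Next I would compare this with the trinomial expansion \eqref{G2_tri}. Both sides are Laurent polynomials in $t$, so extracting the coefficient of $t^j$ is legitimate. On the trinomial side this coefficient is $q^{3j^2+2j}Tr{N+1\brack 2j+1}_{q^3}$, while on the double-sum side it is $q^{3j^2+2j}\sum_{l} q^{3{N-2l\choose 2}}{N+1\brack 2l+1}_{q^3}{2l+1\brack l-j}_{q^6}$; here the constraint $-l-1\le j\le l$ is automatically enforced because ${2l+1\brack l-j}_{q^6}$ vanishes outside this range, so the $l$-sum may be taken freely over $0\le l\le\lfloor N/2\rfloor$. Cancelling the common factor $q^{3j^2+2j}$ leaves an identity for $Tr{N+1\brack 2j+1}_{q^3}$; performing the substitutions $N\mapsto N-1$ and $q^3\mapsto q$, relabelling $l\mapsto n$, and using the reflection symmetry ${2n+1\brack n-j}_{q^2}={2n+1\brack n+j+1}_{q^2}$ then yields the claimed identity.

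Since every ingredient---Theorem~\ref{G2_tri_exp}, the formula \eqref{U_poly}, and the quoted finite Jacobi triple product---is already in hand, this argument is essentially a bookkeeping exercise, and I do not expect a serious obstacle. The only points demanding care are the verification that the specialization $(a,b)=(t,1/t)$ matches the left-hand side $G_{2,3N+1}(t,1/t,q)$ of \eqref{G2_tri} (rather than the conjugate $(1/t,t)$ used in the $G_1$ story), and the final symmetry and rescaling step, where one must track the $q$-powers through $q^3\mapsto q$ and confirm that ${N-2n-1\choose 2}$ emerges with the correct argument.
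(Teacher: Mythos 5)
Your proposal is correct and is essentially the paper's own proof: the paper likewise equates the trinomial expansion of Theorem~\ref{G2_tri_exp} with the polynomial formula \eqref{U_poly} at $(a,b)=(t,1/t)$, applies the quoted Finite Jacobi Triple Product identity, and extracts the coefficient of $t^j$ before substituting $N\mapsto N-1$, $q^3\mapsto q$. The only cosmetic point is that after $N\mapsto N-1$ the sum naturally runs to $\lfloor (N-1)/2\rfloor$, but the extra term at $n=\lfloor N/2\rfloor$ for even $N$ vanishes since ${N\brack N+1}_q=0$, so the stated range is harmless.
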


\section{Dual identities and their implications}\label{Sec_Futher}

If we combine \eqref{Corollary_G1_EQN} and \eqref{G1_tri} with $t=1$, we obtain 

\begin{equation}
\sum_{m,n\geq 0} q^{2n^2+6mn+6m^2}{3(N-2n-m+1)\brack m}_q{2(N-2n-m+1)+n\brack n}_{q^3} 
= \sum_{j=-\infty}^{\infty} q^{3j^2 +j} Tr{ N+1 \brack 2j}_{q^3}.
\label{Larissa}
\end{equation}

Next, we replace $N\mapsto N-1$, $q\mapsto \frac{1}{q}$ in \eqref{Larissa}, and multiply both sides by $q^{\frac{3N^2}{2}}$. After introducing new variable $M$ defined by the equation \begin{equation}\label{change_of_var} n = \frac{N-M-m}{2},\end{equation} doing the necessary simplifications, we get with the aid of \eqref{Binom_1_over_q}
\begin{equation}
\label{Last_formulas1_raw}
\sum_{\substack{M,m\geq 0\\ N+M+m\equiv 0\text{ (mod }2)}} q^{\frac{3M^2+m^2}{2}} {3M\brack m}_q {\frac{N+3M-m}{2}\brack 2M}_{q^3}  
= \sum_{j= -\infty}^\infty q^{3j^2+j} T_0(N,2j,q^3), 
\end{equation}
where
\begin{equation}\label{T0_def} T_0(N,a,q):= q^{\frac{N^2-a^2}{2}} Tr{N\brack a}_{q^{-1}}.
\end{equation}
We can apply the trinomial analogue of Bailey's lemma \cite{Andrews_Berkovich, Berkovich_McCoy_Pearce} to \eqref{Last_formulas1_raw}. Specifically we would like to employ \cite[$(2.10)$]{Berkovich_McCoy_Pearce} with $L\mapsto N$. Then the trinomial Bailey's lemma  \cite[$(2.10)$]{Berkovich_McCoy_Pearce} and the Jacobi Triple Product identity \cite[Thm 2.8]{Theory_of_Partitions} yield the following theorem:
\begin{theorem}
\[\sum_{\substack{N,M,m\geq 0\\ N+M+m\equiv 0\text{ (mod }2)}} \frac{q^{\frac{3N^2+3M^2+m^2}{2}}}{(q^3;q^3)_N} {3M\brack m}_q {\frac{N+3M-m}{2}\brack 2M}_{q^3}  = \frac{(-q^8,-q^{10},q^{18};q^{18})_\infty}{(q^3;q^3)_\infty}.\]
\end{theorem}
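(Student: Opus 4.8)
The plan is to derive the identity directly from \eqref{Last_formulas1_raw} by a single application of the trinomial analogue of Bailey's lemma, reserving the Jacobi Triple Product identity only for the final step of turning a theta sum into a product. The starting observation is that the left-hand side of the theorem is precisely what results from multiplying both sides of \eqref{Last_formulas1_raw} by $q^{3N^2/2}/(q^3;q^3)_N$ and summing over $N\geq 0$: the summand $q^{(3M^2+m^2)/2}{3M\brack m}_q{\frac{N+3M-m}{2}\brack 2M}_{q^3}$ acquires the factor $q^{3N^2/2}/(q^3;q^3)_N$ to become $\frac{q^{(3N^2+3M^2+m^2)/2}}{(q^3;q^3)_N}{3M\brack m}_q{\frac{N+3M-m}{2}\brack 2M}_{q^3}$, with the parity restriction $N+M+m\equiv 0\imod 2$ left untouched. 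It therefore suffices to evaluate
\[
\Sigma:=\sum_{N\geq 0}\frac{q^{3N^2/2}}{(q^3;q^3)_N}\sum_{j=-\infty}^{\infty}q^{3j^2+j}\,T_0(N,2j,q^3),
\]
and to show that it equals the right-hand side of the theorem.

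For each fixed $N$ the inner sum over $j$ is finite, since $T_0(N,2j,q^3)$ vanishes once $|2j|>N$, so $\Sigma$ is a well-defined formal power series in $q$ and the order of summation may be exchanged freely. The heart of the argument is to apply the trinomial Bailey lemma \cite[$(2.10)$]{Berkovich_McCoy_Pearce} with $L\mapsto N$ and base $q^3$ to the right-hand side of \eqref{Last_formulas1_raw}. Writing $T_0$ out through \eqref{T0_def} as $T_0(N,2j,q^3)=q^{3(N^2-4j^2)/2}\,Tr{N\brack 2j}_{q^{-3}}$, this lemma performs the Gaussian resummation over $N$: it collapses the $N$-dependence, produces the infinite product $1/(q^3;q^3)_\infty$, and rescales the theta exponent so that the weight $q^{3j^2+j}$ is carried to $q^{9j^2-j}$. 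The outcome is
\[
\Sigma=\frac{1}{(q^3;q^3)_\infty}\sum_{n=-\infty}^{\infty}q^{9n^2-n}.
\]
The Jacobi Triple Product identity \cite[Thm 2.8]{Theory_of_Partitions}, used with base $q^{18}$ and $z=q^{-1}$, gives $\sum_{n}q^{9n^2-n}=(-q^8,-q^{10},q^{18};q^{18})_\infty$, and dividing by $(q^3;q^3)_\infty$ reproduces exactly the stated product.

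The one genuinely delicate ingredient is the application of the trinomial Bailey lemma, and I expect the parameter matching in \cite[$(2.10)$]{Berkovich_McCoy_Pearce} to be the main obstacle: one must verify that the Gaussian weight $q^{3N^2/2}/(q^3;q^3)_N$ and the $q^{-3}$-trinomials $Tr{N\brack 2j}_{q^{-3}}$ coming from \eqref{T0_def} are normalized exactly as that lemma requires, and then carry out the exponent bookkeeping carefully enough to confirm that $3j^2+j$ is sent to $9j^2-j$ and not to some other quadratic in the same residue class. A secondary point worth recording is the legitimacy of interchanging the $N$- and $j$-summations and of taking the $N\to\infty$ limit inside the lemma; this is harmless at the level of formal $q$-series, but it deserves an explicit remark, since the $Tr{N\brack 2j}_{q^{-3}}$ are Laurent rather than ordinary polynomials in $q$ and the factors $(q^3;q^3)_N^{-1}$ must be expanded consistently before the limit is taken.
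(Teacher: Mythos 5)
Your proposal is correct and takes essentially the same route as the paper: the paper also proves this theorem by applying the trinomial Bailey lemma \cite[$(2.10)$]{Berkovich_McCoy_Pearce} with $L\mapsto N$ (in base $q^3$) to \eqref{Last_formulas1_raw} normalized by $(q^3;q^3)_N$, i.e.\ precisely the Gaussian-weighted resummation over $N$ you describe, and then finishes with the Jacobi Triple Product identity. The only quibble is that the lemma carries $q^{3j^2+j}$ to $q^{9j^2+j}$ rather than $q^{9j^2-j}$; since the two theta series coincide under $j\mapsto -j$, this does not affect the final product.
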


Next, we combine \eqref{G2_tri} with $t=1$ and Corollary~\ref{Corollary_G2} to obtain
\begin{align}\label{Valentine}
\nonumber
\sum_{m,n\geq 0}& q^{Q(m,n)+m+3n}{3(N-2n-m)+2\brack m}_q {2(N-2n-m)+n+1\brack n}_{q^3}\\&+
\sum_{m,n\geq 0} q^{Q(m,n)+3m+6n+1}{3(N-2n-m)\brack m}_q {2(N-2n-m)+n\brack n}_{q^3} \\\nonumber & \hspace{-1cm} =
\sum_{j=-\infty}^{\infty} q^{3j^2 +2j} Tr{ N+1 \brack 2j+1}_{q^3}.
\end{align}
As before, we replace $N\mapsto N-1$, $q\mapsto \frac{1}{q}$ in \eqref{Valentine}, and multiply both sides by $q^{\frac{3N^2}{2}}$
to derive
\begin{align}
\nonumber
\sum_{\substack{M,m\geq 0\\ N+M+m\equiv 1\text{ (mod }2)}}& q^{\frac{3M^2+6M+m^2}{2}} {3M\brack m}_q {\frac{N+3M-m-1}{2}\brack 2M}_{q^3} \\
\label{Last_formulas2_raw}&+\sum_{\substack{M,m\geq 0\\ N+M+m\equiv 1\text{ (mod }2)}} q^{\frac{3M^2+6M+m^2}{2}+1} {3M+2\brack m}_q 
{\frac{N+3M-m+1}{2}\brack 2M+1}_{q^3} 
 \\\nonumber&\hspace{-2cm}= \sum_{j= -\infty}^\infty q^{3j^2+2j} T_0(N, 2j+1,q^{3}).
\end{align}
Dividing both sides of \eqref{Last_formulas2_raw} with $(q^3;q^3)_N$, and employing $(2.10)$ in \cite{Berkovich_McCoy_Pearce} followed by the Jacobi Triple Product identity \cite[Thm 2.8]{Theory_of_Partitions} proves the following theorem:

\begin{theorem}
\begin{align}
\nonumber
\sum_{\substack{N,M,m\geq 0\\ N+M+m\equiv 1\text{ (mod }2)}}& \frac{q^{\frac{3N^2+3M^2+6M+m^2-3}{2}}}{(q^3;q^3)_N} {3M\brack m}_q 
{\frac{N+3M-m-1}{2}\brack 2M}_{q^3} \\
\label{Last_formulas2}&+\sum_{\substack{N,M,m\geq 0\\ N+M+m\equiv 1\text{ (mod }2)}} \frac{q^{\frac{3N^2+3M^2+6M+m^2-1}{2}}}{(q^3;q^3)_N} {3M+2\brack m}_q {\frac{N+3M-m+1}{2}\brack 2M+1}_{q^3} 
\\\nonumber &\hspace{-2cm}= \frac{(-q,-q^{17},q^{18};q^{18})_\infty}{(q^3;q^3)_\infty}. \end{align}
\end{theorem}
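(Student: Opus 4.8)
The plan is to follow, essentially verbatim, the derivation of the preceding theorem, the only change being the passage from even to odd lower indices in the trinomial coefficients. The identity \eqref{Last_formulas2_raw}, finite in $(M,m)$, holds for every fixed $N\geq 0$, so I would treat it as a one-parameter family and resum it in $N$ against the trinomial Bailey weight. Concretely, I would divide \eqref{Last_formulas2_raw} by $(q^3;q^3)_N$, multiply by $q^{(3N^2-3)/2}$, and sum over $N\geq 0$. On the left this is pure bookkeeping: the factor lifts the exponent $\tfrac{3M^2+6M+m^2}{2}$ of the first sum to $\tfrac{3N^2+3M^2+6M+m^2-3}{2}$ and the exponent $\tfrac{3M^2+6M+m^2}{2}+1$ of the second to $\tfrac{3N^2+3M^2+6M+m^2-1}{2}$, which is precisely the left-hand side of \eqref{Last_formulas2}. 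It is worth recording that both resulting exponents are integers exactly because of the parity restriction $N+M+m\equiv1\pmod2$, since $N^2+M^2+m^2\equiv N+M+m\pmod2$; this is also what permits the single weight $q^{(3N^2-3)/2}$ to serve both summands at once.

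The content is entirely on the right-hand side,
\[\sum_{N\geq0}\frac{q^{(3N^2-3)/2}}{(q^3;q^3)_N}\sum_{j=-\infty}^{\infty}q^{3j^2+2j}\,T_0(N,2j+1,q^3),\]
with $T_0$ as in \eqref{T0_def}. I would pull out the global factor $q^{-3/2}$ and apply the trinomial analogue of Bailey's lemma, \cite[$(2.10)$]{Berkovich_McCoy_Pearce}, with $L\mapsto N$ and base $q^3$; this is the transformation that trades the $N$-summation $\sum_{N\geq0}\tfrac{q^{3N^2/2}}{(q^3;q^3)_N}T_0(N,a,q^3)$ for the factor $\tfrac{1}{(q^3;q^3)_\infty}$. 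After this step the double sum collapses to $\tfrac{1}{(q^3;q^3)_\infty}$ times a single bilateral theta series in $j$, and I would identify that series via the Jacobi Triple Product identity \cite[Thm 2.8]{Theory_of_Partitions}: it should simplify to $\sum_{n=-\infty}^{\infty}q^{9n^2-8n}=(-q,-q^{17},q^{18};q^{18})_\infty$ (take $x=-q$ with base $q^{18}$, so that $q^{18}/x=-q^{17}$ and the $n$-th summand is $q^{9n^2-8n}$). Combined with the prefactor $\tfrac{1}{(q^3;q^3)_\infty}$ this is the asserted right-hand side.

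The main obstacle is confined to that middle step. One must specialize \cite[$(2.10)$]{Berkovich_McCoy_Pearce} to base $q^3$ and to the odd lower index $2j+1$, and then carry the weight $q^{(3N^2-3)/2}$ together with the half-integer powers hidden inside $T_0$ (entering through the inversion \eqref{Binom_1_over_q}) through the computation, verifying that the quadratic exponent in $j$ which emerges is exactly $9n^2-8n$ after reindexing. A sign or shift slip there would produce a neighbouring theta quotient instead, so this exponent tracking---rather than any conceptual difficulty, the conceptual work having already been done in \eqref{Last_formulas2_raw}---is where the argument must be executed with care.
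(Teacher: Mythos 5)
Your proposal is correct and follows essentially the same route as the paper: the paper also starts from \eqref{Last_formulas2_raw}, divides by $(q^3;q^3)_N$, applies the trinomial Bailey lemma \cite[$(2.10)$]{Berkovich_McCoy_Pearce} with base $q^3$ (which is exactly the weighted sum over $N$ you describe), and finishes with the Jacobi Triple Product identity. Your exponent bookkeeping checks out: $\tfrac{3(2j+1)^2}{2}+3j^2+2j-\tfrac{3}{2}=9j^2+8j$, whose bilateral sum equals $\sum_n q^{9n^2-8n}=(-q,-q^{17},q^{18};q^{18})_\infty$, as claimed.
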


Furthermore, one can also get curious new $q$-series identities by simply tending $N\rightarrow \infty$ in the identities \eqref{Last_formulas1_raw} and \eqref{Last_formulas2_raw}. These results require the following limit first proven by Andrews--Baxter \cite{Andrews_Baxter}: \begin{equation}\label{AB_limit}
\lim_{N\rightarrow\infty} T_0(2N,a,q^2) + T_0(2N+1,a,q^2) = \frac{(-q;q^2)_\infty}{(q^2;q^2)_\infty}.
\end{equation} We take two copies of \eqref{Last_formulas1_raw} one with $N\mapsto 2N$ and one with $N\mapsto 2N+1$ and add these copies together. Same procedure is done for \eqref{Last_formulas2_raw}. After sending $N$ to $\infty$, we use the q-Binomial Theorem \cite[p. 354, II.3]{Gasper_Rahman} on the left-hand side to sum the variable $m$ out. We use the limit \eqref{AB_limit} and the Jacobi Triple Product identity \cite[Thm 2.8]{Theory_of_Partitions} on the right-hand side. After the dilation $q\mapsto q^2$ and necessary simplifications we get the following two identities.

\begin{theorem}\label{Slater_style_THM}
\begin{align}
\label{Slater_style1}\sum_{n\geq 0} \frac{q^{3n^2}(-q;q^2)_{3n}}{(q^6;q^6)_{2n}} &=  (-q^4,-q^8;q^{12})_\infty(-q^3;q^3)_\infty,\\
\label{Slater_style2}\sum_{n\geq 0} \frac{q^{3n^2+6n}(-q;q^2)_{3n}}{(q^6;q^6)_{2n}} &+\sum_{n\geq 0} \frac{q^{3n^2+6n+2}(-q;q^2)_{3n+2}}{(q^6;q^6)_{2n+1}} = (-q^2,-q^{10};q^{12})_\infty(-q^3;q^3)_\infty.
\end{align}
\end{theorem}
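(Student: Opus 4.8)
The plan is to obtain \eqref{Slater_style1} and \eqref{Slater_style2} as the $N\to\infty$ limits of the finite identities \eqref{Last_formulas1_raw} and \eqref{Last_formulas2_raw}. The device that makes this limit clean is a \emph{superposition} that dissolves the parity restriction: for \eqref{Slater_style1} I would write \eqref{Last_formulas1_raw} once with $N\mapsto 2N$ and once with $N\mapsto 2N+1$ and add the two copies. For each fixed pair $(M,m)$ exactly one of $2N+M+m\equiv 0$ and $2N+1+M+m\equiv 0\pmod 2$ holds (according to the parity of $M+m$), so the sum over the two copies reinstates every $(M,m)$ with $M,m\geq 0$ exactly once and erases the congruence condition. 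The identical superposition, applied to the two-term left side of \eqref{Last_formulas2_raw} (whose constraint is $N+M+m\equiv 1$), again covers each $(M,m)$ once, so \eqref{Slater_style2} is handled by the same pipeline.

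Next I would pass to the limit $N\to\infty$ termwise. On the left, the only $N$-dependence resides in the $q^3$-binomial ${\frac{2N+3M-m}{2}\brack 2M}_{q^3}$ (and its $2N+1$ analogue), whose top entry grows without bound; by \eqref{Binom_limit} both tend to $1/(q^3;q^3)_{2M}$, leaving $\sum_{M,m\geq 0}\frac{q^{(3M^2+m^2)/2}}{(q^3;q^3)_{2M}}{3M\brack m}_q$. On the right, the pair $T_0(2N,a,q^3)+T_0(2N+1,a,q^3)$ converges, by the Andrews--Baxter limit \eqref{AB_limit} applied with base $q^3$ (i.e. with its internal ``$q^2$'' set to $q^3$, so $q\mapsto q^{3/2}$), to $(-q^{3/2};q^3)_\infty/(q^3;q^3)_\infty$. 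Crucially this value is \emph{independent of} $a$, so it factors out of the $j$-sum, leaving $\tfrac{(-q^{3/2};q^3)_\infty}{(q^3;q^3)_\infty}\sum_{j}q^{3j^2+j}$ for \eqref{Last_formulas1_raw}, and the corresponding $\sum_{j}q^{3j^2+2j}$ for \eqref{Last_formulas2_raw}.

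I would then collapse the two remaining one-dimensional sums. Summing $m$ out on the left is the finite $q$-binomial (Gauss) theorem: since $q^{m^2/2}=q^{\binom{m}{2}}(q^{1/2})^m$, one gets $\sum_{m=0}^{3M}q^{m^2/2}{3M\brack m}_q=(-q^{1/2};q)_{3M}$, so the left side becomes $\sum_{M\geq 0}\frac{q^{3M^2/2}(-q^{1/2};q)_{3M}}{(q^3;q^3)_{2M}}$. On the right the theta series is summed by the Jacobi Triple Product, giving $\sum_j q^{3j^2+j}=(q^6;q^6)_\infty(-q^2;q^6)_\infty(-q^4;q^6)_\infty$. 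Applying the dilation $q\mapsto q^2$ clears every half-integer power: the left side turns into $\sum_{n\geq 0}\frac{q^{3n^2}(-q;q^2)_{3n}}{(q^6;q^6)_{2n}}$, exactly the left side of \eqref{Slater_style1}. For the right side one must simplify $\frac{(-q^3;q^6)_\infty(-q^4;q^{12})_\infty(-q^8;q^{12})_\infty}{(q^6;q^{12})_\infty}$; using Euler's identity $1/(q^6;q^{12})_\infty=(-q^6;q^6)_\infty$ together with the splitting $(-q^3;q^3)_\infty=(-q^3;q^6)_\infty(-q^6;q^6)_\infty$, this collapses to $(-q^4,-q^8;q^{12})_\infty(-q^3;q^3)_\infty$, proving \eqref{Slater_style1}. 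Identity \eqref{Slater_style2} follows from the same steps applied to \eqref{Last_formulas2_raw}: the odd bottom entries $2j+1$ produce the theta $\sum_j q^{3j^2+2j}=(q^6;q^6)_\infty(-q;q^6)_\infty(-q^5;q^6)_\infty$, which after dilation and the same Euler-type reductions yields $(-q^2,-q^{10};q^{12})_\infty(-q^3;q^3)_\infty$, while the extra second summand is carried along unchanged.

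I expect the principal obstacle to be the infinite-product bookkeeping at the end rather than any conceptual difficulty: one must track the four interacting bases $q^2,q^3,q^6,q^{12}$ through the dilation and repeatedly recognize Euler-type splittings to reassemble the target products. A secondary point requiring care is the legitimacy of the transient half-integer powers $q^{1/2}$ and $q^{3/2}$; these are harmless formal placeholders that become genuine integer powers after $q\mapsto q^2$, but one should confirm that the termwise passage $N\to\infty$, the interchange of this limit with the infinite $j$-sum, and the parity superposition are all valid. Each is, since for every fixed $(M,m)$ only finitely many terms are involved and all convergence is in the formal power series topology in $q$.
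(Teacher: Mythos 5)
Your proposal is correct and follows essentially the same route as the paper: adding the $N\mapsto 2N$ and $N\mapsto 2N+1$ copies of \eqref{Last_formulas1_raw} and \eqref{Last_formulas2_raw} to dissolve the parity condition, letting $N\to\infty$, summing $m$ out by the $q$-binomial theorem, applying the Andrews--Baxter limit \eqref{AB_limit} and the Jacobi Triple Product on the right, and finishing with the dilation $q\mapsto q^2$ and product simplifications. The only difference is that you spell out the bookkeeping (the limit \eqref{Binom_limit}, the half-integer powers, and the Euler-type splittings) that the paper leaves implicit.
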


We would like to give a combinatorial interpretation of Theorem~\ref{Slater_style_THM}. To this end, we would like to rewrite the left-hand side sum \eqref{Slater_style1} as follows: \begin{equation}\label{Slater_left_1}\sum_{n\geq 0} \frac{q^{3n^2}}{(q^3;q^6)_n}\frac{(-q,-q^5;q^6)_{n}}{(q^{12};q^{12})_{n}}.\end{equation} We interpret the factor $3n^2$ as the generating function for the partition \[(6n-3,6n-9,\dots,9,3),\] which has $n$, all $3$ modulo 6, consecutive parts. The factor $3n^2 / (q^3;q^6)_n$ is the generating function for the partitions into  $3$ modulo 6 parts $\leq 6n-3$, where every part size appears at least once. The second factor \[\frac{(-q,-q^5;q^6)_{n}}{(q^{12};q^{12})_{n}}\] is the generating function of the partitions into distinct parts $\pm 1$ modulo 6 that are $\leq 6n-1$ and into 0 modulo 12 parts (which may repeat) that are $\leq 12n$. Therefore, all together, one can see that \eqref{Slater_left_1} is the generating function for the number of partitions with a largest $3$ modulo 6 (which might repeat) part $\lambda$, where all the smaller $3$ modulo 6 parts also appear as parts in the partition, only even parts are $0$ modulo 12 and they are each $\leq 2\lambda+6$, and distinct $\pm 1$ modulo 6 parts $\leq \lambda + 2 $. We denote the set of partitions that satisfy the above conditions by $\mathcal{C}_{1}$. 

Similar interpretation can be given to the left-hand side of \eqref{Slater_style2}. First, we rewrite it as \[(1+q^2)\sum_{n\geq 0} \frac{q^{3n^2+6n}}{(q^3;q^6)_{n+1}}\frac{(-q,-q^5;q^6)_{n}}{(q^{12};q^{12})_{n}}.\] Dividing both sides of \eqref{Slater_style2} by $(1+q^2)$ yields 
\begin{equation}
\sum_{n\geq 0} \frac{q^{3n^2+6n}}{(q^3;q^6)_{n+1}}\frac{(-q,-q^5;q^6)_{n}}{(q^{12};q^{12})_{n}}
= (-q^{10},-q^{14};q^{12})_\infty(-q^3;q^3)_\infty.
\label{Slater_style3}
\end{equation}

The sum on the left is the generating function for the partitions that satisfy the conditions of $\mathcal{C}_{1}$, except that $3$ may or may not be a part, the largest $3$ modulo 6 part $\lambda$ occurs at least once, 0 modulo 12 parts are all $\leq 2\lambda-6$, and distinct $\pm 1$ modulo 6 parts $\leq \lambda - 4 $. We denote the set of these partitions by $\mathcal{C}_{2}$.

Also, for $k=1,\ 2$, let $\mathcal{B}_{k}$ be the set of partitions into distinct parts divisible by 3 or congruent to $\pm (6 - 2k)$ modulo 12, and 2 is never a part. Then, Theorem~\ref{Slater_style_THM} has its combinatorial equivalent:

\begin{theorem}[Dual of the Capparelli's Partition Theorem]\label{Combinatorial_THM_1_1}
\[
\sum_{\pi \in  \mathcal{C}_{1}} q^{|\pi|} = \sum_{\pi\in\mathcal{B}_{1}}q^{|\pi|},\text{  and  }
\sum_{\pi \in \mathcal{C}_{2}} q^{|\pi|} =\sum_{\pi\in\mathcal{B}_{2}}q^{|\pi|}.
\]
\end{theorem}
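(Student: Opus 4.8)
The plan is to read off both sides of the theorem as specializations of identities already in hand, so that the combinatorial statement reduces to matching generating functions. For the left-hand sides, the discussion preceding the theorem has already identified $\sum_{\pi\in\mathcal{C}_1}q^{|\pi|}$ with the series \eqref{Slater_left_1}, which is exactly the left-hand side of \eqref{Slater_style1}, and $\sum_{\pi\in\mathcal{C}_2}q^{|\pi|}$ with the left-hand side of \eqref{Slater_style3}. Invoking Theorem~\ref{Slater_style_THM} together with the identity \eqref{Slater_style3} derived from \eqref{Slater_style2} then gives
\[
\sum_{\pi\in\mathcal{C}_1}q^{|\pi|}=(-q^4,-q^8;q^{12})_\infty(-q^3;q^3)_\infty,\qquad
\sum_{\pi\in\mathcal{C}_2}q^{|\pi|}=(-q^{10},-q^{14};q^{12})_\infty(-q^3;q^3)_\infty.
\]
So it remains only to recognize these two products as the generating functions for $\mathcal{B}_1$ and $\mathcal{B}_2$.

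First I would recall the standard fact that $(-q^a;q^d)_\infty=\prod_{k\geq 0}(1+q^{a+kd})$ is the generating function for partitions into distinct parts congruent to $a$ modulo $d$. For $\mathcal{B}_1$ the admissible parts are those divisible by $3$ together with those congruent to $\pm4$ modulo $12$ (that is, $4$ or $8$); these two residue families are disjoint, since no multiple of $3$ is congruent to $4$ or $8$ modulo $12$. Hence the distinct-part generating function factors as $(-q^3;q^3)_\infty\,(-q^4,-q^8;q^{12})_\infty$, and since $2$ lies in neither family the stipulated exclusion of the part $2$ is vacuous. This matches the product for $\mathcal{C}_1$ above and establishes the first identity.

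For $\mathcal{B}_2$ the admissible parts are those divisible by $3$ together with those congruent to $\pm2$ modulo $12$ (that is, $2$ or $10$), again disjoint from the multiples of $3$, but now the part $2$ must be explicitly forbidden. The only place this intervenes is the factor for parts $\equiv 2\pmod{12}$: dropping the smallest such part replaces $(-q^2;q^{12})_\infty=(1+q^2)(-q^{14};q^{12})_\infty$ by $(-q^{14};q^{12})_\infty$. Thus the distinct-part generating function for $\mathcal{B}_2$ is $(-q^3;q^3)_\infty\,(-q^{10};q^{12})_\infty\,(-q^{14};q^{12})_\infty=(-q^{10},-q^{14};q^{12})_\infty(-q^3;q^3)_\infty$, matching the product for $\mathcal{C}_2$ and establishing the second identity.

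Since Theorem~\ref{Slater_style_THM} and the $\mathcal{C}_k$ interpretations carry out all the analytic and combinatorial work, the remaining step is essentially bookkeeping. The hard part will not be any computation but rather getting the exclusion of the part $2$ right: it is automatic for $\mathcal{B}_1$, yet for $\mathcal{B}_2$ it genuinely removes the factor $(1+q^2)$—precisely the factor divided out in passing from \eqref{Slater_style2} to \eqref{Slater_style3}—so the combinatorial side and the analytic side are calibrated to agree only because that same $(1+q^2)$ was cleared on both.
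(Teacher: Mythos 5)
Your proposal is correct and takes essentially the same route as the paper: Theorem~\ref{Combinatorial_THM_1_1} is obtained there precisely by reading the sum sides of \eqref{Slater_style1} and \eqref{Slater_style3} as the generating functions of $\mathcal{C}_1$ and $\mathcal{C}_2$, and the product sides as the distinct-part generating functions of $\mathcal{B}_1$ and $\mathcal{B}_2$, with the exclusion of the part $2$ in $\mathcal{B}_2$ matching the cleared factor $(1+q^2)$. Your write-up merely makes explicit the residue-class bookkeeping that the paper leaves implicit.
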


One example of this result is given in Table~\ref{Table_Combinatorial_THM_1_1}.

\begin{table}[htp]\caption{Example of Theorem~\ref{Combinatorial_THM_1_1} for partitions with size 21} \label{Table_Combinatorial_THM_1_1}
\[\begin{array}{cc|ccc}
\pi \in \mathcal{C}_{1} & \pi\in\mathcal{B}_{1} &  \pi \in \mathcal{C}_{2} & \pi\in\mathcal{B}_{2} \\
(12,5,3,1) 		& (21)		  & (12,9)		  & 			 (21)  \\
(12,3,3,3) 		& (18,3)	  & (9,9,3)		& 			 (18,3) \\
(9,9,3)  		  & (15,6)	  & (9,5,3,3,1)	& 			 (15,6)\\
(9,5,3,3,1) 	& (12,9)	  & (9,3,3,3,3)		& 		 (12,9)\\
(9,3,3,3,3) 	& (12,6,3)	& (3,3,3,3,3,3,3)				& 						 (12,6,3)\\
(5,3,3,3,3,3,1)	& (9,8,4)	&				&						 \\
(3,3,3,3,3,3,3)	& (8,6,4,3)	&				&					\\
\end{array}\]
\end{table}

Before we move on, we remark that \eqref{Slater_style1} and \eqref{Slater_style3} are special cases $(q,z)\rightarrow(q^3,-q^2)$ of the following identities

\begin{equation}
\sum_{n\geq 0} q^{n^2}\frac{(zq,z^{-1}q;q^2)_n}{(q;q^2)_n(q^4;q^4)_n}
= (-q;q)_\infty(zq^2,\frac{q^2}{z};q^4)_\infty,
\label{FG1}
\end{equation}

\begin{equation}
\sum_{n\geq 0} q^{n^2+2n} \frac{(zq,z^{-1}q;q^2)_n}{(q;q^2)_{n+1}(q^4;q^4)_n}
= (-q;q)_\infty(zq^4,\frac{q^4}{z};q^4)_\infty,
\label{FG2}
\end{equation}
respectively. Both formulas can easily be proven using techniques of \cite{Bailey}.

We would like to give a secondary combinatorial interpretation of Theorem~\ref{Slater_style_THM}. Let $\nu(\pi)=\nu$ be the number of parts of $\pi$, and ${}_e\nu_{<m}(\pi)$ be the number of even parts of $\pi$ that are strictly less than $m$, for some $m\in\mathbb{Z}_{\geq0}$. For $M=0$ and 2, let $\mathcal{A}_{M}$ be the set of partitions $\pi = (\lambda_1,\lambda_2,\dots, \lambda_{\nu(\pi)})$, with distinct even parts, where every odd part $\geq M$ can repeat up to 3 times, and for any $i\leq \lceil \nu(\pi)/3\rceil$ \begin{enumerate}[i.]
\item $\nu(\pi) \equiv 0$ or $M$ mod 3,
\item $\lambda_{3i}-\lambda_{3i+1} \geq 2$,
\item $|\lambda_{3i-2}-2\lambda_{3i-1}+\lambda_{3i}(1-\delta_{M,2}\delta_{3i,\nu+1})|\leq 1-\delta_{M,2}\delta_{3i,\nu+1}$, 
\item $\lambda_{3i-r} \equiv\hspace{-.1cm} 1+M + 2\hspace{-.1cm}\left(\nu-i+\delta_{3\lceil \nu/3\rceil-\nu,1}(\lambda_{\nu}+\chi(\lambda_{\nu})+1)+{}_e\nu_{<\lambda_{3i-r}}(\pi)\right) + \chi(\lambda_{3i-r})$ mod 4, for $r=0$ and $2$.
\end{enumerate}
We would like to note that that these conditions slightly resemble the conditions of the Capparelli's Companion Theorem: Theorem~1.3 of \cite{BerkovichUncu1}.

\begin{theorem}[Companion to the Dual of Capparelli's Partition Theorem]\label{Combinatorial_THM_1}
\[
\sum_{\pi \in  \mathcal{A}_{0}} q^{|\pi|} = \sum_{\pi\in\mathcal{B}_{1}}q^{|\pi|},\text{  and  }
\sum_{\pi \in \mathcal{A}_{2}} q^{|\pi|}  =\sum_{\pi\in\mathcal{B}_{2}}q^{|\pi|}.
\]
\end{theorem}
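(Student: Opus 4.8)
The overall strategy is to prove the companion theorem by the same device used throughout this section: relate each $\mathcal{A}_M$ side to one of the left-hand series of Theorem~\ref{Slater_style_THM}. Since Theorem~\ref{Slater_style_THM} already identifies those series with the products $(-q^4,-q^8;q^{12})_\infty(-q^3;q^3)_\infty$ and $(-q^{10},-q^{14};q^{12})_\infty(-q^3;q^3)_\infty$, and since these products are by definition $\sum_{\pi\in\mathcal{B}_1}q^{|\pi|}$ and $\sum_{\pi\in\mathcal{B}_2}q^{|\pi|}$ (the content of Theorem~\ref{Combinatorial_THM_1_1}), the whole theorem reduces to the two purely combinatorial statements
\[
\sum_{\pi\in\mathcal{A}_0}q^{|\pi|}=\sum_{n\ge 0}\frac{q^{3n^2}(-q;q^2)_{3n}}{(q^6;q^6)_{2n}},\qquad
\sum_{\pi\in\mathcal{A}_2}q^{|\pi|}=\sum_{n\ge 0}\frac{q^{3n^2+6n}}{(q^3;q^6)_{n+1}}\frac{(-q,-q^5;q^6)_n}{(q^{12};q^{12})_n}.
\]
I stress the asymmetry here: the $M=0$ target is \eqref{Slater_style1} in its $(-q;q^2)_{3n}/(q^6;q^6)_{2n}$ shape, whereas the $M=2$ target must be the reduced series \eqref{Slater_style3}, not the two-sum \eqref{Slater_style2}; the two differ by the factor $(1+q^2)$ tied to whether the part $2$ is admitted.

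I would prove both by the Kur\c{s}ung\"oz-style machinery of Section~\ref{Sec_Motions}: grade $\mathcal{A}_M$ by the number of parts $\nu(\pi)$, exhibit a unique minimal configuration in each graded class, and generate every member of the class from its minimal configuration by bijective rules of motion whose weights are read off the $q$-product factors. Condition (i) pins the grading: for $M=0$ only $\nu=3n$ occurs, matching the single sum, while for $M=2$ the classes $\nu=3n$ and $\nu=3n+2$ occur. The minimal configurations are the expected stacks of triples: for $M=0$, $\nu=3n$, the partition with blocks $(2i-1,2i-1,2i-1)$, $i=1,\dots,n$, of size $3\sum_{i=1}^n(2i-1)=3n^2$; for $M=2$ the triples begin at $3$, giving size $3n^2+6n$, with the $\nu=3n+2$ class carrying a short two-element top block (the source of the $\delta_{3i,\nu+1}$ terms in (iii) and (iv)). Conditions (ii) and (iii) are exactly the requirements that within a block the three parts form a near-arithmetic progression and that successive blocks are separated by a gap $\ge 2$, so they are manifestly invariant under moves that slide whole blocks and gently deform them.

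The role of the two kinds of $q$-factors is to encode the two families of moves. For $M=0$ the geometric factor $1/(q^6;q^6)_{2n}$ records the bounded staircase displacements of the blocks in steps of $6$ (and since $6\equiv 2\pmod 4$ these shift the residues tracked in (iv) predictably), while $(-q;q^2)_{3n}$ records which parts absorb an odd increment, simultaneously toggling parity to manufacture the distinct even parts and fixing the mod-$4$ class. For $M=2$ the analogous roles are played by the repetition factors $1/(q^3;q^6)_{n+1}$ and $1/(q^{12};q^{12})_n$ together with the distinct factor $(-q,-q^5;q^6)_n$ of \eqref{Slater_style3}. Matching these weights to the moves and checking that the moves preserve distinctness of the even parts and the bound of three repetitions on the odd parts is routine bookkeeping once the dictionary is fixed.

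The hard part, and the place where essentially all the work sits, is condition (iv): the running congruence
\[
\lambda_{3i-r}\equiv 1+M+2\!\left(\nu-i+\delta_{3\lceil \nu/3\rceil-\nu,1}(\lambda_\nu+\chi(\lambda_\nu)+1)+{}_e\nu_{<\lambda_{3i-r}}(\pi)\right)+\chi(\lambda_{3i-r})\pmod 4
\]
must be shown to be exactly the invariant preserved by the rules of motion, and conversely every residue pattern it permits must be realized by a unique sequence of moves. I would verify this by induction on the blocks from the smallest block upward, tracking how each admissible odd increment $2j+1$ and each even displacement changes both sides of the congruence, with the term ${}_e\nu_{<\lambda_{3i-r}}(\pi)$ absorbing the parity carried past the already-placed even parts. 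For $M=2$ this bookkeeping additionally must implement the passage from \eqref{Slater_style2} to \eqref{Slater_style3}: the $\delta_{M,2}$ corrections on the short top block are precisely what remove the surplus counted by the factor $(1+q^2)$, so that $\mathcal{A}_2$ is enumerated by the reduced series rather than by the naive two-sum, and a careful treatment of that boundary block is where the $M=2$ identity genuinely departs from $M=0$. Once (iv) is identified with the motion invariant, summing the block weights over all configurations reproduces the two displayed series, and the reduction in the first paragraph closes the proof.
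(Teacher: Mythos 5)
Your opening reduction is correct and is, as far as the paper goes, the same route the authors intend: the paper presents this theorem as a ``secondary combinatorial interpretation'' of Theorem~\ref{Slater_style_THM}, so the entire content is the pair of claims that $\sum_{\pi\in\mathcal{A}_0}q^{|\pi|}$ equals the left-hand side of \eqref{Slater_style1} and that $\sum_{\pi\in\mathcal{A}_2}q^{|\pi|}$ equals the left-hand side of \eqref{Slater_style3}. You are also right about the one delicate point of that reduction: the $M=2$ target must be the reduced series \eqref{Slater_style3} and not \eqref{Slater_style2}, since $\mathcal{B}_2$ forbids the part $2$. Note, however, that the paper itself supplies no argument beyond this point --- it only remarks that the conditions defining $\mathcal{A}_M$ resemble Theorem~1.3 of \cite{BerkovichUncu1} and illustrates the statement with a table --- so there is no detailed proof in the paper to compare your construction against; the method you propose (minimal configurations plus weight-preserving moves, in the spirit of Section~\ref{Sec_Motions} and of \cite{BerkovichUncu1}) is certainly the intended one.

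The problem is that your proposal stops exactly where the theorem begins. Everything after the reduction is a plan rather than an argument: the moves are never defined, and the central claim --- that conditions (i)--(iv) characterize precisely the partitions generated from the minimal configurations, with the mod-$4$ congruence (iv) as the invariant of the moves --- \emph{is} the theorem, and you explicitly defer it (``routine bookkeeping'', ``I would verify this by induction''). Two concrete soft spots make this more than a matter of polish. First, your dictionary for $M=0$ misdescribes $1/(q^6;q^6)_{2n}$: in this $q$-series (infinite $N$) setting it generates at most $2n$ parts divisible by $6$ of unbounded size, not ``bounded staircase displacements,'' and you give no reason why $n$ blocks of three parts should carry exactly $2n$ such insertion slots while the images retain distinct even parts and odd parts repeating at most three times. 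Second, the assertion that the $\delta_{M,2}$ corrections in (iii)--(iv) --- which force $\lambda_{\nu-1}=2\lambda_{\nu}$ on the short final block when $\nu\equiv 2\pmod 3$ --- remove exactly the surplus factor $(1+q^2)$ separating \eqref{Slater_style2} from \eqref{Slater_style3} is stated, not proven, and it is the crux of the $M=2$ case. Until the dictionary is actually constructed and these two points are established, what you have is an outline of a proof, not a proof.
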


We illustrate these results in Table~\ref{Table_Combinatorial_THM_1}.

\begin{table}[h]\caption{Example of Theorem~\ref{Combinatorial_THM_1} for partitions with size 21} \label{Table_Combinatorial_THM_1}
\[\begin{array}{cc|cc}
\pi \in \mathcal{A}_{0} & \pi\in\mathcal{B}_{1} &  \pi \in \mathcal{A}_{2}& \pi\in\mathcal{B}_{2} \\
(13,7,1) 		& (21)		& 	(14,7)		& 	  (21)  \\
(12,7,2) 		& (18,3)	& (11,7,3)		& (18,3) \\
(9,7,5)  		& (15,6)	& (10,7,4)		& (15,6)\\
(9,6,3,1,1,1) 	& (12,9)	& (7,7,7)		& (12,9)\\
(8,7,6) 		& (12,6,3)	& (7,6,5,2,1)	& (12,6,3)\\
(8,6,4,1,1,1)	& (9,8,4)	&				&	 \\
(5,5,5,3,2,1)	& (8,6,4,3)	&				&	\\
\end{array}\]
\end{table}

\section{Outlook}\label{Sec_Outlook}

The nine polynomial identities of Theorems~\ref{Intro_Main_THM}, \ref{THM_C1_23}, \ref{THM_C2_23}, and \ref{THM_C3_23} that imply Capparelli's identities are merely a collection of polynomial identities that imply these beautiful results. There are other polynomial identities that imply Capparelli's partition theorems. One example is the following.

\begin{theorem}\label{THM_Outlook} For $Q(m,n):= 2m^2 + 6mn + 6n^2$, and any non-negative integer $M$,
\begin{equation}\label{Eqn_Outlook}
\sum_{n= 0}^{\lfloor M/2\rfloor}\sum_{m=0}^{M-2n} \frac{q^{Q(m,n)} (q^3;q^3)_M }{(q;q)_m (q^3;q^3)_n (q^3;q^3)_{M-2n-m}}
= \sum_{j=-M}^M q^{3j^2+j} {2M \brack M-j}_{q^3}, 
\end{equation} where $\lfloor x\rfloor$ denotes the greatest integer $\leq x$.
\end{theorem}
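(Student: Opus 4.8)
The plan is to show that both sides of \eqref{Eqn_Outlook} satisfy the same recurrence in $M$ together with matching initial conditions, mirroring the strategy already used throughout Section~\ref{Sec_MultiSums}. The key observation is that the left-hand side of \eqref{Eqn_Outlook} is a finite, \emph{renormalized} version of the left-hand side of \eqref{Fin_Cap_1_1}: replacing the two $q$-binomial coefficients ${3(N-2n-m)\brack m}_q$ and ${2(N-2n-m)+n\brack n}_{q^3}$ by the single multinomial-type factor $(q^3;q^3)_M / \big((q;q)_m (q^3;q^3)_n (q^3;q^3)_{M-2n-m}\big)$ corresponds to a different normalization in which the top index is exactly $M$. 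I would therefore first let $R_M(q)$ denote the left-hand side and $T_M(q)$ the right-hand side, clear the denominator on the left by writing the summand as $q^{Q(m,n)}{M\brack n}_{q^3}{M-2n\brack m}_{q^3,q}$-style factors, and hand the resulting proper $q$-hypergeometric double sum to Riese's qMultiSum package \cite{qMultiSum} to extract a recurrence in $M$ for $R_M(q)$. Summing that homogeneous recurrence over $m,n$ (using that the $q$-multinomial factors vanish outside the summation range, exactly as the $q$-binomials vanish in \eqref{Binom_def}) yields a linear recurrence for $R_M(q)$.

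Second, I would produce the matching recurrence for $T_M(q) = \sum_{j} q^{3j^2+j}{2M\brack M-j}_{q^3}$. The natural tool here is the $q$-trinomial machinery of Section~\ref{Sec_Trinomials}: the sum $T_M$ is, up to the substitution $q^3 \mapsto q$ and the base-change identity \eqref{tri_identity_2n}, essentially a specialization of the trinomial generating function in \eqref{G1_tri}, since \eqref{tri_identity_2n} rewrites $Tr{N\brack 2j}_q$ precisely as a sum $\sum_n {N\brack 2n}_q{2n\brack n+j}_{q^2}q^{N-2n\choose 2}$ whose $j$-weighted total is a central-binomial sum of exactly the shape appearing in $T_M$. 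I would exploit the standard $q$-Pascal recurrence \eqref{q_bin_rec} applied to ${2M\brack M-j}_{q^3}$ (shifting $M\mapsto M-1$ twice) together with the Gaussian-coefficient symmetry and the substitution $j\mapsto j\pm 1$ to collapse the $j$-sum, thereby deriving a recurrence in $M$ for $T_M$; alternatively, one can invoke Lemma~\ref{Lemma_2} after recognizing $T_M$ as a trinomial specialization. The goal is to match this against the recurrence found for $R_M$.

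Third, once both recurrences agree (of some fixed order $d$), I would verify the $d$ base cases $M=0,1,\dots,d-1$ by direct evaluation: at $M=0$ both sides equal $1$, and the next few values can be computed explicitly in $q$, just as the initial conditions $\L_0,\L_1,\L_2,\L_3$ were checked after \eqref{Sum_rec1}. This completes the induction.

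The main obstacle I anticipate is \emph{not} the left-hand recurrence — qMultiSum handles that mechanically under its homogeneity assumption — but rather establishing that the two recurrences coincide. The left-hand summand carries mixed bases $q$ and $q^3$ and the asymmetric linear term $Q(m,n)$, whereas the right-hand side is a pure base-$q^3$ trinomial sum; reconciling them requires the base-change identity \eqref{tri_identity_2n} (equivalently the Finite Jacobi Triple Product) to translate the double sum into the single trinomial sum in the correct normalization. A secondary subtlety is that the upper summation limit $M-2n$ on $m$ and $\lfloor M/2\rfloor$ on $n$ are \emph{exactly} enforced by the vanishing of $(q^3;q^3)_{M-2n-m}^{-1}$ for negative argument, so one must confirm that the non-universality of the $q$-binomial recurrence flagged after \eqref{q_bin_rec} does not introduce spurious non-homogeneous correction terms here; I expect it does not, because the renormalized summand vanishes cleanly at the boundary, but this point warrants an explicit check before declaring the recurrence homogeneous.
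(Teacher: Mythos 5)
Your overall skeleton is the paper's own: Theorem~\ref{THM_Outlook} is proved (in sketch form in Section~\ref{Sec_Outlook}, with details deferred to \cite{BerkovichUncu2}) by having qMultiSum \cite{qMultiSum} certify a recurrence for the double sum, obtaining a recurrence for the right-hand side, reconciling the two, and checking initial values; your warning about hidden non-homogeneous correction terms is also sensible in light of the discussion surrounding \eqref{q_bin_rec}. The genuine gap is in your second step. The right-hand side $T_M(q)=\sum_j q^{3j^2+j}{2M\brack M-j}_{q^3}$ is \emph{not} a specialization of \eqref{G1_tri}, and \eqref{tri_identity_2n} does not convert it into one: undoing the dilation in \eqref{tri_identity_2n} (i.e.\ $q\mapsto q^3$) gives $Tr{N\brack 2j}_{q^3}=\sum_n {N\brack 2n}_{q^3}{2n\brack n+j}_{q^6}\,q^{3{N-2n\choose 2}}$, whose inner binomials are in base $q^6$ and have varying top index $2n$; multiplying by $q^{3j^2+j}$ and summing over $j$ collapses them, by the Finite Jacobi Triple Product, into $(-q^2,-q^4;q^6)_n$, so what you recover is the right-hand side of \eqref{Fin_Cap_1_1} --- not $T_M$, whose binomials are central, in base $q^3$, with fixed top index $2M$. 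In fact no such identification is possible: by \eqref{G1_tri} at $t=1$ one has $\sum_j q^{3j^2+j}Tr{N+1\brack 2j}_{q^3}=G_{1,3N+1}(1,1,q)$, and, for example, $T_1=1+q^2+q^3+q^4$ equals none of these polynomials ($G_{1,1}=1$, $G_{1,4}=1+q^2+q^3+q^4+q^6$, and the degrees only grow). For the same reason you cannot ``invoke Lemma~\eqref{Lemma_2}'': that recurrence acts on trinomial coefficients, and $T_M$ contains none.

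The repair is simpler than what you propose and is exactly what the paper prescribes: the right-hand side of \eqref{Eqn_Outlook} is a single proper $q$-hypergeometric sum in $j$, so the $q$-Zeilberger algorithm of Paule--Riese \cite{qZeil} finds and proves a recurrence for it mechanically; one then either checks that this side satisfies the qMultiSum recurrence of the left-hand side, or merges the two recurrences into a common one using the closure properties implemented in qGeneratingFunctions \cite{qGeneratingFunctions}, and finishes by comparing initial terms. (Your fallback of iterating the $q$-Pascal rule \eqref{q_bin_rec} on ${2M\brack M-j}_{q^3}$ with shifts $j\mapsto j\pm1$ could also be pushed through by hand, but no base-change identity such as \eqref{tri_identity_2n} is needed or relevant anywhere in the proof.)
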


One can directly prove this identity using recurrences. With computer assistance, one can get (and prove) the recurrences for the left-hand and right-hand sides of the above identity using Riese's implementation qMultiSum \cite{qMultiSum} and Paule and Riese's implementation qZeil \cite{qZeil}, respectively. Then, similar to Section~\ref{Sec_recurrences} and \ref{Sec_MultiSums}, one either shows that the right-hand side also satisfies the recurrences of the left-hand side, or one combines the two recurrences automatically using Kauers and Koutschan's implementation qGeneratingFunctions \cite{qGeneratingFunctions} to find a recurrence satisfied by both sides of \eqref{Eqn_Outlook}. The rest of the proof is just confirmation of the initial terms.

As $M$ tends to infinity, the left-hand side of \eqref{Eqn_Outlook} turns into the left-hand side of \eqref{KR1_GF} that appears in \eqref{Cap1}. The right-hand side of \eqref{Eqn_Outlook} requires some simplifications. Using \eqref{Binom_limit2} and the Jacobi Triple Product Identity \cite[p.49, Ex.1]{Theory_of_Partitions} we reach the right-hand side product of \eqref{Cap1}.

There are other polynomial identities similar to \eqref{Eqn_Outlook}. It should be noted that at first sight ---without the knowledge of the identity--- it is not even clear that the left-hand side polynomial of \eqref{Eqn_Outlook} has non-negative integer coefficients. 

Surprisingly, we have also found an identity for the sum of two Capparelli products:
\begin{theorem}\label{THM_Outlook2}For $Q(m,n):= 2m^2 + 6mn + 6n^2$,
\begin{equation}\label{EQN_Outlook2}
\sum_{m,n\geq 0} \frac{q^{Q(m,n)-2m-3n}}{(q;q)_m(q^3;q^3)_n} = (-q^2,-q^4;q^6)_\infty(-q^3;q^3)_\infty  + (-q,-q^5;q^6)_\infty(-q^3;q^3)_\infty .
\end{equation}
\end{theorem}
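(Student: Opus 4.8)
The plan is to prove \eqref{EQN_Outlook2} by first recognizing its right-hand side and reducing to a single $q$-series statement. The two products that appear there are exactly the right-hand sides of the Capparelli identities \eqref{Cap1} and \eqref{Cap2}, both of which are already established in this paper as the $N\to\infty$ limits of \eqref{Fin_Cap_1_1} and \eqref{Fin_Cap_2_1}. Substituting the series sides of \eqref{Cap1} and \eqref{Cap2} for these products, the claim \eqref{EQN_Outlook2} becomes equivalent to the pure double-sum identity
\begin{equation}
\sum_{m,n\geq 0} \frac{q^{Q(m,n)-2m-3n}}{(q;q)_m(q^3;q^3)_n}
= \sum_{m,n\geq 0} \frac{q^{Q(m,n)}\bigl(1+q^{m+3n}+q^{3m+6n+1}\bigr)}{(q;q)_m(q^3;q^3)_n}.
\label{Outlook2_reduced}
\end{equation}
In this form the infinite products have disappeared and everything is phrased through a single family of twisted double sums.

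There is a conceptual reason to expect \eqref{Outlook2_reduced}. Writing $H(x,y):=\sum_{m,n\geq0} x^m y^n q^{Q(m,n)}/\bigl((q;q)_m(q^3;q^3)_n\bigr)$, the identity \eqref{Outlook2_reduced} reads $H(q^{-2},q^{-3})=H(1,1)+H(q,q^3)+qH(q^3,q^6)$, and completing the square shows that each linear twist merely shifts the center of $Q$ by a half-integer vector, e.g.\ $Q(m,n)-2m-3n=Q(m-\tfrac12,n)-\tfrac12$ and $Q(m,n)+3m+6n+1=Q(m,n+\tfrac12)-\tfrac12$. This is strong evidence but not a proof, because the denominators $1/\bigl((q;q)_m(q^3;q^3)_n\bigr)$ block a rewriting as a genuine theta function: the inner sum over $m$ of $q^{2m^2}/(q;q)_m$ has no product evaluation.

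To prove \eqref{Outlook2_reduced} rigorously I would descend to finite polynomials, exactly as in Theorem~\ref{Intro_Main_THM} and as recommended for Theorem~\ref{THM_Outlook}. First I would truncate both sides by replacing the reciprocals $1/(q;q)_m$ and $1/(q^3;q^3)_n$ with $q$-binomial coefficients carrying the offsets already appearing in \eqref{Fin_Cap_1_1}--\eqref{Fin_Cap_3_1}, chosen so that \eqref{Binom_limit} recovers \eqref{Outlook2_reduced} as $N\to\infty$. Then I would run qMultiSum \cite{qMultiSum} to produce a recurrence in $N$ for each finite double sum, combine the recurrences of the three right-hand pieces with Kauers--Koutschan's qGeneratingFunctions \cite{qGeneratingFunctions} into a common recurrence, check that the left-hand finite sum satisfies it as well, and finish by matching finitely many initial values.

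The main obstacle is the construction of the correct finite model: the twist $q^{-2m-3n}$ is not manifestly a partition generating function, since its minimal configuration sits below that of \eqref{min_config}, so the offsets in the truncating $q$-binomials must be tuned so that the finite identity is genuinely polynomial and its three right-hand summands share one recurrence with the left-hand sum. Book-keeping the non-homogeneous correction terms that this forces upon the recurrences --- of the kind already met in \eqref{Summand_rec3}--\eqref{Summand_rec4}, arising from the failure of \eqref{q_bin_rec} at $m=n=0$ --- and then verifying the initial conditions is the delicate part; once the finite identity is secured, the passage to \eqref{Outlook2_reduced}, and hence to \eqref{EQN_Outlook2}, is routine. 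A more satisfying but riskier alternative would be a direct Kur\c{s}ung\"oz-style motion argument (Section~\ref{Sec_Motions}) realizing the left-hand double sum of \eqref{Outlook2_reduced} as $\sum_{\pi\in\mathcal{D}_1}q^{|\pi|}+\sum_{\pi\in\mathcal{D}_2}q^{|\pi|}$, which equals the right-hand side by Capparelli's theorem; there the difficulty is to interpret the $q^{-2m-3n}$ shift as a bona fide minimal configuration counting $\mathcal{D}_1$ and $\mathcal{D}_2$ simultaneously.
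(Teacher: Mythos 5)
Your proposal cannot be measured against an in-paper argument, because this paper never proves Theorem~\ref{THM_Outlook2}: the theorem is stated in the Outlook section and its proof is explicitly deferred to the companion papers \cite{BerkovichUncu2} and \cite{BerkovichUncu3}. So the proposal has to stand on its own. Its opening step does: since \eqref{Cap1} and \eqref{Cap2} are established in this paper as the $N\to\infty$ limits of \eqref{Fin_Cap_1_1} and \eqref{Fin_Cap_2_1}, the claim \eqref{EQN_Outlook2} is indeed equivalent to the single double-sum identity
\begin{equation*}
\sum_{m,n\geq 0} \frac{q^{Q(m,n)-2m-3n}}{(q;q)_m(q^3;q^3)_n}
= \sum_{m,n\geq 0} \frac{q^{Q(m,n)}\left(1+q^{m+3n}+q^{3m+6n+1}\right)}{(q;q)_m(q^3;q^3)_n},
\end{equation*}
and this reduction is correct, rigorous, and very much in the spirit of the paper's closing remark inviting a comparison of \eqref{EQN_Outlook2} with \eqref{Cap1}--\eqref{Cap3}.

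The gap is everything after that: for the reduced identity you offer a research plan, not a proof. The completing-the-square observation is, as you yourself note, only heuristic, since the factors $1/\bigl((q;q)_m(q^3;q^3)_n\bigr)$ prevent any theta-function rewriting. The finitization-plus-recurrence strategy is then described but never executed: no finite polynomial model is written down, no recurrence is produced, no initial conditions are matched. In a recurrence-based proof of this kind (compare Section~\ref{Sec_MultiSums}), the entire mathematical content \emph{is} the explicit finite identity --- the precise $q$-binomial offsets making both sides polynomials that satisfy a common recurrence, together with the non-homogeneous correction terms forced by the failure of \eqref{q_bin_rec} at $m=n=0$ --- and you explicitly flag exactly this construction as the unresolved ``main obstacle.'' The same applies to your alternative suggestion of a Kur\c{s}ung\"oz-style motion argument: interpreting the $-2m-3n$ twist as a minimal configuration is named as a difficulty, not resolved. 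Until one of these routes is actually carried out (the authors' own route, in \cite{BerkovichUncu2, BerkovichUncu3}, goes through polynomial identities for $q$-trinomial coefficients), what you have established is only the correct equivalence displayed above, with the substantive identity left open.
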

The reader is invited to compare \eqref{EQN_Outlook2} with identities \eqref{Cap1}-\eqref{Cap3}.

Finally, we would like to point out that the identity \eqref{Slater_style1} is the first member of the following new infinite hierarchy:
\begin{theorem}\label{THM_Outlook3} For positive integer $\nu$, we have 
\begin{align}
\nonumber\sum_{n_1,n_2,\dots,n_\nu\geq 0} &\frac{q^{3(N^2_1+N^2_2+\dots + N^2_\nu)} (-q;q^2)_{3n_\nu}}{(q^6;q^6)_{n_1}(q^6;q^6)_{n_2}\dots (q^6;q^6)_{n_{\nu-1}}(q^6;q^6)_{2n_\nu}}\\
\label{EQN_Outlook3} &\hspace{2cm}= \frac{(-q^3;q^3)_\infty}{(q^{12};q^{12})_\infty} (q^{6(\nu+1)},-q^{3\nu+1},-q^{3\nu+5};q^{6(\nu+1)})_\infty,
\end{align}where $N_i := n_i+ n_{i+1}+\dots +n_\nu$ for $i=1, 2, \dots, \nu$.
\end{theorem}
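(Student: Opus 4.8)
The plan is to recognize \eqref{EQN_Outlook3} as an Andrews--Gordon type multisum produced by a Bailey chain whose seed is the already established identity \eqref{Slater_style1}, and to carry out the iteration while tracking the evolution of the product side. First I would pass to the partial-sum variables $m_i := N_i = n_i + n_{i+1} + \dots + n_\nu$, so that $m_1 \geq m_2 \geq \dots \geq m_\nu \geq 0$, $n_i = m_i - m_{i+1}$ (with $n_\nu = m_\nu$), and the left-hand side of \eqref{EQN_Outlook3} becomes
\[
\sum_{m_1 \geq m_2 \geq \dots \geq m_\nu \geq 0} \frac{q^{3(m_1^2 + \dots + m_\nu^2)}\,(-q;q^2)_{3m_\nu}}{(q^6;q^6)_{m_1-m_2}(q^6;q^6)_{m_2-m_3}\cdots (q^6;q^6)_{m_{\nu-1}-m_\nu}(q^6;q^6)_{2m_\nu}}.
\]
In this form the summand has exactly the shape output by iterating Bailey's lemma: one factor $q^{3 m_i^2}$ and one denominator $(q^6;q^6)_{m_i-m_{i+1}}$ per summation variable, with the innermost weight $(-q;q^2)_{3m_\nu}/(q^6;q^6)_{2m_\nu}$ playing the role of the seed $\beta$-function. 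The base case $\nu=1$ is precisely \eqref{Slater_style1}, which I may assume. I note in passing that a naive induction on $\nu$ by summing the outermost variable $m_1$ fails: it produces a partial-theta type sum $\sum_{s\geq 0} q^{3s^2+6m_2 s}/(q^6;q^6)_s$ with no product closed form, which is exactly what the Bailey machinery is designed to circumvent.

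Next I would set up the seed Bailey pair. The point is that \eqref{Slater_style1} (equivalently the specialization $(q,z)\mapsto(q^3,-q^2)$ of \eqref{FG1}) encodes a Bailey pair $(\alpha_r,\beta_r)$ relative to $a=1$ in the base $q^6$, with $\beta_r = (-q;q^2)_{3r}/(q^6;q^6)_{2r}$ and seed coefficient $\alpha_r$ a signed theta coefficient proportional to $q^{3r^2-2r}$. Because the exponent $q^{3 m_i^2}$ is only half of the $q^{6 m_i^2}$ that the ordinary base-$q^6$ Bailey lemma would produce, I would use the Bressoud (even-modulus) specialization of Bailey's lemma --- equivalently the $q$-trinomial analogue of \cite{Berkovich_McCoy_Pearce, Andrews_Berkovich} already used to derive Theorem~\ref{Slater_style_THM} --- so that each application contributes exactly $q^{3 m_i^2}/(q^6;q^6)_{m_i - m_{i+1}}$ on the $\beta$-side and multiplies the companion $\alpha$-series by $q^{3r^2}$, advancing the modulus of its associated theta function by $6$.

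I would then iterate this lemma $\nu-1$ further times beyond the single application that already yields the base case, and let the outermost index tend to infinity. After all the dressings the right-hand side collapses to a $\nu$-independent product prefactor times the bilateral theta sum $\sum_{j=-\infty}^\infty q^{3(\nu+1)j^2 - 2j}$. A single application of the Jacobi Triple Product identity \cite[Thm 2.8]{Theory_of_Partitions} rewrites this theta sum as $(q^{6(\nu+1)},-q^{3\nu+1},-q^{3\nu+5};q^{6(\nu+1)})_\infty$, using $6(\nu+1)/2 = 3\nu+3$ and the linear shift $-2$; and the prefactor --- which is pinned down once and for all by matching the $\nu=1$ case against the product in \eqref{Slater_style1} --- equals $(-q^3;q^3)_\infty/(q^{12};q^{12})_\infty$. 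This reproduces the right-hand side of \eqref{EQN_Outlook3}.

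The main obstacle will be the half-base bookkeeping. One must confirm that the innermost weight $(-q;q^2)_{3r}/(q^6;q^6)_{2r}$ together with the theta coefficient $\alpha_r$ genuinely forms a Bailey pair for the even-modulus lemma, and one must track the auxiliary infinite products that this non-standard (Bressoud, or trinomial) specialization leaves behind at each step, verifying that they assemble into the single $\nu$-independent factor $(-q^3;q^3)_\infty/(q^{12};q^{12})_\infty$ rather than drifting with $\nu$. Once the seed pair and the precise form of the lemma are fixed, the iteration, the limit of the outermost index, and the closing Jacobi Triple Product step are routine.
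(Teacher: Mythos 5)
Your proposal cannot be compared against an argument inside this paper, because the paper does not prove Theorem~\ref{THM_Outlook3}: the result is stated in the Outlook and its proof is explicitly deferred to \cite{BerkovichUncu2} and \cite{BerkovichUncu3}, where it is obtained from doubly bounded polynomial identities for \emph{refined} $q$-trinomial coefficients rather than from a Bailey chain. Judged on its own, your architecture is arithmetically consistent with the target: seed theta coefficients with exponents $3j^2\pm 2j$, multiplication by $q^{3j^2}$ per iteration plus one more $q^{3j^2}$ from the final summation, the bilateral sum $\sum_{j}q^{3(\nu+1)j^2-2j}$, the Jacobi triple product step, and the prefactor $(-q^3;q^3)_\infty/(q^{12};q^{12})_\infty=(-q^3;q^6)_\infty/(q^6;q^6)_\infty$ all fit together, and the $\nu=1$ case does reduce to \eqref{Slater_style1}.

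However, the engine of the induction is asserted rather than supplied, and as stated it fails on two concrete points. First, the two lemmas you invoke are not equivalent, and neither yields your clean step. The Bressoud specialization ($a=1$, base $q^6$, $\rho_1=-q^3$, $\rho_2\to\infty$) of Bailey's lemma gives
\begin{equation*}
\beta'_n=\sum_{r\geq 0}\frac{(-q^3;q^6)_r\,q^{3r^2}}{(-q^3;q^6)_n\,(q^6;q^6)_{n-r}}\,\beta_r ,
\end{equation*}
not $\sum_{r} q^{3r^2}\beta_r/(q^6;q^6)_{n-r}$, while the trinomial Bailey lemma of \cite{Andrews_Berkovich,Berkovich_McCoy_Pearce}, as used in Section~\ref{Sec_Futher}, is a one-shot limiting statement and not an iterable pair-to-pair transform. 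The Bressoud route can be repaired, because the extra Pochhammer symbols telescope along the chain, leaving only $(-q^3;q^6)_{n_\nu}$ attached to the innermost variable and $1/(-q^3;q^6)_n\to 1/(-q^3;q^6)_\infty$ outside; but this forces the seed to be $\beta_r=(-q,-q^5;q^6)_r/(q^6;q^6)_{2r}$, so that $(-q^3;q^6)_{n_\nu}\beta_{n_\nu}=(-q;q^2)_{3n_\nu}/(q^6;q^6)_{2n_\nu}$, and \emph{not} $(-q;q^2)_{3r}/(q^6;q^6)_{2r}$ as you wrote; the leftover $1/(-q^3;q^6)_\infty$ is exactly what turns the naive prefactor $1/(q^6;q^6)_\infty$ into the correct one. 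Second, and more fundamentally, a Bailey pair is an infinite family of finite-$n$ relations and cannot be ``encoded'' by the single series--product identity \eqref{Slater_style1}; you must separately exhibit and prove the seed pair
\begin{equation*}
\frac{(-q,-q^5;q^6)_n}{(q^6;q^6)_{2n}}=\sum_{j=0}^{n}\frac{\alpha_j}{(q^6;q^6)_{n-j}\,(q^6;q^6)_{n+j}},\qquad \alpha_0=1,\quad \alpha_j=q^{3j^2-2j}\bigl(1+q^{4j}\bigr)\ \ (j\geq 1),
\end{equation*}
which is of the classical type in \cite{Bailey}, the same source the paper cites for \eqref{FG1}. With this pair and the telescoped Bressoud iteration in hand, your limit-and-triple-product endgame does go through and reproduces \eqref{EQN_Outlook3}; without them, the step you label ``routine'' is precisely the missing proof.
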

We prove Theorems~\ref{THM_Outlook}--\ref{THM_Outlook3} in \cite{BerkovichUncu2} and \cite{BerkovichUncu3}.

\section{Note Added}

Andrew Sills brought to our attention two formulas of F. J. Dyson \cite[$(7.5, 7.6)$]{Bailey}. These formulas can be stated as

\begin{equation}
\sum_{n\geq 0} \frac{q^{3n^2}(q^2,q^4;q^6)_n}{(q^{12};q^{12})_n(q^3;q^6)_n} = (-q^3;q^3)_\infty(q^5,q^7;q^{12})_\infty,
\label{7.5}
\end{equation}
\begin{equation}
1-\sum_{n\geq 1} \frac{q^{3n^2-2}(q^2;q^6)_{n+1}(q^4;q^6)_{n-1}}{(q^{12};q^{12})_n(q^3;q^6)_n} = (-q^3;q^3)_\infty(q,q^{11};q^{12})_\infty.
\label{7.6}
\end{equation}
He suggested that the above formulas are similar to those in Theorem~\ref{Slater_style_THM}. 

Clearly, \eqref{7.5} is a special case $(q,z)\rightarrow(q^3,q)$ of \eqref{FG1}. In addition, the equation 
\eqref{7.6} is a special case $(q,z)\rightarrow(q^3,-q^2)$ of the following new identity

\begin{equation}
1+\frac{1}{z}\sum_{n\geq 1} \frac{q^{n^2}(-z;q^2)_{n+1}(-\frac{q^2}{z};q^2)_{n-1}}{(q^{4};q^{4})_n(q;q^2)_n} 
= (-q;q)_\infty(-\frac{q}{z},-zq^{3};q^{4})_\infty,
\label{7.6a}
\end{equation} 
which can easily be proven by techniques of \cite{Bailey}.
Partition theoretical interpretation of this formula will be given elsewhere.

\section{Acknowledgements}

Authors would like to thank Veronika Pillwein, Ka\u{g}an Kur\c{s}ung\"oz, Andrew Sills, and Frank Garvan for many helpful discussions. Authors would also like to extend their gratitude to the organizers of the Combinatory Analysis 2018 conference, where this work started.

Research of the first author is partly supported by the Simons Foundation, Award ID: 308929. Research of the second author is supported by the Austrian Science Fund FWF, SFB50-07 and SFB50-09 Projects. The second author would also like to thank the Simons Foundation, which supported his visit to University of Florida.

\end{document}